\begin{document}

\newcommand{\Q}{{\bf Q}}
\newcommand{\N}{{\bf N}}
\newcommand{\E}{{\mathcal E}}
\newcommand{\cB}{{\mathcal B}}
\newcommand{\caL}{{\mathcal L}}
\newcommand{\cA}{{\mathcal A}}
\newcommand{\cC}{{\mathcal C}}
\newcommand{\cP}{{\mathcal P}}
\newcommand{\cS}{{\mathcal S}}
\newcommand{\cZ}{{\mathcal Z}}
\newcommand{\caD}{{\mathcal D}}
\newcommand{\Z}{{\bf Z}}
\newcommand{\R}{{\bf R}}
\newcommand{\C}{{\bf C}}
\newcommand{\M}{{\bf M}}
\newcommand{\bP}{{\bf P}}
\newcommand{\F}{{\bf F}}
\newcommand{\cG}{{\mathcal G}}
\newcommand{\caH}{{\mathcal H}}
\newcommand{\cO}{{\mathcal O}}
\newcommand{\AS}{\mathcal{AS}}
\newcommand{\SAS}{\mathcal{SAS}}
\newcommand{\cM}{{\mathcal M}}
\newcommand{\cN}{{\mathcal N}}
\newcommand{\se}{:}
\newcommand{\gl}{\mathfrak{gl}}
\newcommand{\sve}{{\scriptscriptstyle {\vee}}}

%%% AM:
\newcommand{\ts}{\,}
\newcommand{\tss}{\hspace{1pt}}
\newcommand{\hra}{\hookrightarrow}
\newcommand{\wt}{\widetilde}
\newcommand{\wh}{\widehat}
\newcommand{\ot}{\otimes}
\newcommand{\Hom}{{\rm Hom}}
\newcommand{\End}{{\rm End}}
\newcommand{\Aut}{{\rm Aut}}
\newcommand{\Rep}{{\mathcal Rep}}
\newcommand{\Vect}{{\mathcal Vect}}
\newcommand{\Mod}{{\mathcal Mod}}
\newcommand{\Mor}{{\mathcal Mor}}
\newcommand{\Funct}{{\mathcal Funct}}
\newcommand{\da}{{\mbox{-}}}
\newcommand{\dda}{{\rm{-}}}
\newcommand{\U}{ {\rm U}}
\newcommand{\Y}{ {\rm Y}}
\newcommand{\He}{ {\rm H}}
\newcommand{\non}{\nonumber}

\renewcommand{\theequation}{\arabic{section}.\arabic{equation}}

\newtheorem{thm}{Theorem}[section]
\newtheorem{lem}[thm]{Lemma}
\newtheorem{prop}[thm]{Proposition}
\newtheorem{cor}[thm]{Corollary}
\newtheorem{conj}[thm]{Conjecture}

\theoremstyle{definition}
\newtheorem{defin}[thm]{Definition}

\theoremstyle{remark}
\newtheorem{remark}[thm]{Remark}
\newtheorem{example}[thm]{Example}

\newcommand{\bth}{\begin{thm}}
\renewcommand{\eth}{\end{thm}}
\newcommand{\bpr}{\begin{prop}}
\newcommand{\epr}{\end{prop}}
\newcommand{\ble}{\begin{lem}}
\newcommand{\ele}{\end{lem}}
\newcommand{\bco}{\begin{cor}}
\newcommand{\eco}{\end{cor}}
\newcommand{\bde}{\begin{defin}}
\newcommand{\ede}{\end{defin}}
\newcommand{\bex}{\begin{example}}
\newcommand{\eex}{\end{example}}
\newcommand{\bre}{\begin{remark}}
\newcommand{\ere}{\end{remark}}
\newcommand{\bcj}{\begin{conj}}
\newcommand{\ecj}{\end{conj}}

\newcommand{\bal}{\begin{aligned}}
\newcommand{\eal}{\end{aligned}}
\newcommand{\beq}{\begin{equation}}
\newcommand{\eeq}{\end{equation}}
\newcommand{\ben}{\begin{equation*}}
\newcommand{\een}{\end{equation*}}

\newcommand{\bpf}{\begin{proof}}
\newcommand{\epf}{\end{proof}}

\def\beql#1{\begin{equation}\label{#1}}

\title{\Large\bf A categorical approach to classical
and quantum Schur--Weyl duality}

\author{Alexei Davydov\quad and\quad Alexander Molev}

\date{} % July 2010 - start

\maketitle

\vspace{25 mm}

\begin{abstract}
We use category theory
to propose a unified approach to
the Schur--Weyl dualities involving the general linear
Lie algebras, their polynomial extensions and
associated quantum deformations.
We define multiplicative sequences
of algebras exemplified by the sequence of group algebras
of the symmetric groups and use them to introduce
associated monoidal categories. Universal properties
of these categories lead to uniform constructions
of the Drinfeld functor connecting representation theories
of the degenerate affine Hecke algebras and the Yangians
and of its $q$-analogue. Moreover, we construct
actions of these categories on certain
(infinitesimal) braided categories containing a Hecke object.
\end{abstract}

\vspace{25 mm}

\noindent
Max Planck Institut f\"ur Mathematik\\
Vivatsgasse 7, 53111 Bonn, Germany\\
alexei1davydov@gmail.com

\vspace{7 mm}

\noindent
School of Mathematics and Statistics\newline
University of Sydney,
NSW 2006, Australia\newline
alexander.molev@sydney.edu.au

\newpage

\tableofcontents

\newpage

\section{Introduction}\label{sec:int}
\setcounter{equation}{0}

Classically, starting with the vector representation $V$ of $\gl_N$
and taking its tensor powers $V^{\otimes n}$
one gets a sequence of the corresponding endomorphism algebras.
In the limiting case
$N\to\infty$, the sequence of the endomorphism algebras stabilizes
and admits a precise description: this is a sequence of
the group algebras of the symmetric groups $S_n$.

In a quantum version of the Schur--Weyl duality, the Lie algebra
$\gl_N$ is replaced by the quantized enveloping algebra $\U_q(\gl_N)$,
and the corresponding stable sequence of the endomorphism algebras
is a sequence of the Hecke algebras $\He_n(q)$.
Similarly, the endomorphism algebras associated with the
representations of the polynomial current and loop Lie algebras
and their quantizations
lead to sequences of group algebras of
affine extensions of the symmetric groups and the
sequences of affine Hecke algebras
and their degenerate versions.

It is these sequences of algebras which we take as a starting point
of our approach. We use them to construct certain categories
(which we call the {\it Schur--Weyl categories\/}) modeling the
parts of the corresponding representation categories generated
by the vector representations. Each of the Schur--Weyl categories
possesses a {\it universal property\/}: as a monoidal category,
it is generated by a single object
and a collection of endomorphisms. This property leads to
a simple characterization of
monoidal functors from these categories thus allowing us to reformulate
the Schur--Weyl dualities as the existence and fullness properties
of functors from the Schur--Weyl categories to the appropriate
representation categories.

As another application of the universal property, we
show that a localized category associated with the
(degenerate) affine Hecke algebras is equivalent
to a localized category associated with the
(semi-) affine symmetric group algebras.
Moreover, motivated by the work \cite{or:ab}
we construct actions of the Schur--Weyl categories
corresponding to
the (degenerate) affine Hecke algebras on certain
(infinitesimal) braided
categories containing Hecke objects.

Category theory approaches to the Schur--Weyl
duality were developed by many authors; see e.g. \cite{bk:sw},
\cite{bs:hw3}, \cite{bs:hw4}, \cite{cr:de}, \cite{ms:ci} and
a recent review \cite{s:sw}. In our interpretation
of the Schur--Weyl duality we do not ``categorify" it in the sense
of \cite{s:sw}, but rather give a categorical viewpoint
based on the universality properties of the Schur--Weyl categories.
Note also that the ``higher Schur--Weyl duality" involving
cyclotomic Hecke algebras does not appear to fit into our context as
these algebras do not form multiplicative sequences;
cf. \cite{bk:sw} and \cite{ms:ci}.

We will now explain our approach in more detail by using the
classical Schur--Weyl duality as a model example.
One consequence of this duality is the fact that there are
no non-zero homomorphisms between the $m$-th and $n$-th
tensor powers of the $N$-dimensional vector
representation $V$ of the general linear Lie algebra $\gl_N$
for $m\ne n$:
\ben
\Hom_{\gl_N}(V^{\otimes m},V^{\otimes n})=0.
\een
Moreover, the natural homomorphism
\ben
k[S_n]\to \End_{\gl_N}(V^{\otimes n})
\een
from the group algebra of the symmetric group is surjective
(it is also injective if $n\leqslant N$)
\cite{h:pit}, \cite{sh:ud} and \cite{w:cg}. Here and
throughout the paper $k$
denotes a field, and vector spaces and algebras are considered
over $k$, unless stated otherwise.

At this point we want to shift the emphasis from
the Lie algebra $\gl_N$ to the
sequence of the symmetric group algebras
\ben
k[S_*]=\{k[S_n]\ |\ n\geqslant 0\}
\een
and to regard this sequence as
the primary object of the duality.
The sequence $k[S_*]$ comes equipped with the
algebra homomorphisms
\beql{hommu}
\mu_{m,n}:k[S_m]\otimes k[S_n]\to k[S_{m+n}],
\eeq
induced by the natural inclusions $S_m\times S_n\hra S_{m+n}$.
The homomorphisms $\mu_{m,n}$ satisfy a certain associativity
property which we describe below in Sec.\ts\ref{sec:msa}.
The collection $k[S_*]$ together with these homomorphisms
is an example of what we call a {\it multiplicative
sequence of algebras\/}.

We can use the multiplicative sequence $k[S_*]$ to define a $k$-linear
monoidal category
$\overline\cS$ whose objects $[n]$ are parameterized by
the natural numbers $n\geqslant 0$
with no morphisms between the objects corresponding to
different numbers and with the endomorphism algebra
$\End_{\overline\cS}([n])$
of the $n$-th object
being $k[S_n]$. The tensor product on objects is defined by
the addition of natural numbers, $[m]\otimes [n]=[m+n]$, while
on the morphisms it is defined by the algebra homomorphisms
\eqref{hommu}. The category
$\overline\cS$ possesses a universal property: it is a free symmetric
monoidal $k$-linear category generated by one object $X=[1]$.

We denote by $\cS$ the category
of $k$-linear functors from
the opposite category $\overline\cS^{\ts op}$ to the category of
vector spaces. Then $\cS$ is a free symmetric abelian
monoidal $k$-linear category generated by one object.
More explicitly, $\cS$
is the direct sum of
the categories of right modules $\oplus_n \Mod\dda k[S_n]$.
The classical Schur--Weyl duality implies that the symmetric monoidal
functor from $\cS$ to the category of representations of $\gl_N$
sending the generator $X$ to $V$ is full (surjective on morphisms).
In other words, the universal enveloping algebra of $\gl_N$ is
Tannaka--Krein dual to the symmetric monoidal functor from $\cS$ to
the category of vector spaces, sending the generator $X$ to $V$.

This construction can be generalized as follows. Let $A$
be a unital associative algebra. One can start with a free symmetric
monoidal category $\cS(A)$
generated by one object $X$ and with $A$
as the algebra of its endomorphisms. The
corresponding multiplicative sequence of algebras is now the
sequence of cross-products $A^{\otimes n}*S_n$.
We can associate with an $N$-dimensional vector
space $V$ the symmetric monoidal functor from
$\cS(A)$ to the category of vector spaces which sends the generator $X$
to $V\otimes A$, considered as a vector space. Its Tannaka--Krein dual
is the universal enveloping algebra of the Lie algebra
$\End_A(V\otimes A)$ of $A$-linear endomorphisms of $V\otimes A$. Two
examples of this situation will be particularly important for us due
to their interesting quantizations. Namely, these are
the cases where $A$ is the
algebra of polynomials in one variable, or its Laurent version. Then
the Tannaka--Krein duals are respectively
the universal enveloping algebras of the polynomial
current Lie algebra $\gl_N[t]$ and of the Lie algebra
of Laurent polynomials $\gl_N[t,t^{-1}]$.
The corresponding (Schur--Weyl dual)
multiplicative sequences of algebras are the sequences of group
algebras of affine symmetric (semi-)groups.

More examples come via the Schur--Weyl duality
from quantizations of the general linear Lie algebras or their current
deformations. Namely, the multiplicative sequence of Hecke algebras
arises from the endomorphism algebras of the tensor powers of the vector
representation of the quantized enveloping algebra $\U_q(\gl_N)$.
This gives rise to a monoidal category which we call the
{\it Hecke category\/}. Furthermore, the quantum
loop algebras and Yangians are respective
quantizations of the algebras
$\U(\gl_N[t,t^{-1}])$ and $\U(\gl_N[t])$.
The Schur--Weyl dual sequence associated with the
quantum loop algebras is formed by the affine Hecke algebras, while
the dual sequence for the Yangians
is formed by the sequence of degenerate affine Hecke algebras.

Now we outline the contents of the paper.
We start by
defining multiplicative sequences of algebras and the corresponding
Schur--Weyl categories and list their basic properties
(Sec.\ts\ref{sec:msa}). In particular, we analyze a condition on an
embedding of multiplicative sequences which guarantees that the
right adjoint to the monoidal functor induced by the embedding
(the restriction functor along the embedding) is
also monoidal. This will be used later to construct fiber functors
(i.e. monoidal functors to vector spaces) for
the Schur--Weyl categories of (degenerate) affine Hecke algebras.
We also study presentations of multiplicative sequences in terms of
generators and relations and corresponding freeness properties of
the Schur--Weyl categories.

In the subsequent sections we consider the families of
multiplicative sequences of algebras which are Schur--Weyl
dual to the general linear Lie algebra,
its polynomial current versions and their quantum
deformations: the quantized
enveloping algebra, the Yangian and the quantum loop algebra.
One of the advantages of our approach is a natural
and unifying construction of the actions of the Hecke algebras (or their
affine and degenerate versions) on tensor powers of the
corresponding (polynomially extended) vector representation.
In the case of the Yangians this action does not appear to have been
previously described in the literature; cf. \cite{a:df}, \cite{d:da}.
In the quantum loop algebra case, the action of
the affine Hecke algebras
on the tensor powers of the vector
representation provides an alternative to the construction
previously given in \cite[Theorem~4.9]{grv:qg}.

In the last section we prove equivalences of certain localized
categories and construct categorical actions
of the Schur--Weyl categories.

\medskip

The work on the paper began while the first author was
visiting the University of Sydney and it was completed during his
visit to Max Planck Institut f\"ur Mathematik (Bonn) in
2010. He would like to thank these institutions for hospitality
and excellent working conditions. We would also like
to thank the Australian Research Council and Max Planck Gesellschaft
for supporting these visits.

\section{Sequences of algebras and monoidal categories}
\label{sec:msa}
\setcounter{equation}{0}

Throughout the paper we will freely use the standard language
of categories and functors; see e.g. \cite{m:cw}.
All categories and functors will be
linear over the ground field $k$, unless stated otherwise.
All monoidal functors will be understood as strong monoidal.
The set (or vector space) of
morphisms between objects $X,Y$
of a category $\cC$ will be
denoted by $\cC(X,Y)$ and the endomorphism algebra
$\cC(X,X)$ of $X\in\cC$ will
be denoted by $\End_\cC(X)$.

\subsection{Multiplicative sequences of algebras and monoidal
categories}\label{subsec:ms}

We will deal with sequences of associative unital algebras
$A_*=\{A_n\ |\ n\geqslant 0\}$ equipped with collections
of (unital) algebra
homomorphisms
$$\mu_{m,n}:A_m\otimes A_n\to A_{m+n},\qquad m,n\geqslant 0,$$
satisfying the following associativity
axiom: for any $l,m,n\geqslant 0$ the following
diagram commutes:
$$
\xymatrix{ A_l\otimes A_m\otimes A_n
\ar[rr]^{\mu_{\tss l,m}\ot I} \ar[d]_{I\ot\mu_{m,n}} &&
A_{l+m}\otimes A_n \ar[d]^{\mu_{\tss l+m,n}} \\
A_l\otimes A_{m+n} \ar[rr]^{\mu_{\tss l,m+n}} && A_{l+m+n} .}
$$
We call such a sequence {\it multiplicative} and we will
always assume that $A_0=k$.
A model example
of a multiplicative sequence of algebras is provided by the following
construction.
Let $\cC$ be a (strict) monoidal category
such that $\End_\cC(I)=k$,
where $I$ denotes the unit object of $\cC$.
Given an object $X$ of
$\cC$, the sequence $A_*$ with $A_n=\End_\cC(X^{\otimes n})$
is multiplicative
with respect to the homomorphisms $\mu_{m,n}$ given by the tensor
product on morphisms

$$
\End_\cC(X^{\otimes m})\otimes \End_\cC(X^{\otimes n})
\to \End_\cC(X^{\otimes m+n}).
$$
Moreover, any multiplicative sequence can be
obtained in this way. Indeed,
starting with a multiplicative sequence $A_*$, define the category
$\overline\cC(A_*)$ with objects $[n]$ parameterized by natural numbers,
with no morphisms between different objects and with the endomorphism
algebras $\End_{\overline\cC(A_*)}([n])=A_n$. Define tensor product
on the objects of $\overline\cC(A_*)$ by $[m]\otimes [n]=[m+n]$.
The multiplicative structure of the sequence $A_*$ yields the tensor
product on morphisms.

Let $f_*=\{f_n\ |\ n\geqslant 0\}$ be a sequence of
algebra homomorphisms
$f_n:A_n\to B_n$ between the corresponding algebras of two
multiplicative sequences $A_*$ and $B_*$.
We call $f_*$ a {\it homomorphism}
of multiplicative sequences if for any $m,n$ the following diagram
commutes:
$$\xymatrix{ A_m\otimes A_n \ar[rr]^{f_m\otimes f_n}
\ar[d]_{\mu_{m,n}} && B_m\otimes B_n \ar[d]^{\mu_{m,n}} \\
A_{m+n} \ar[rr]^{f_{m+n}} && B_{m+n} .}
$$
We will say that $f_*$ is an {\it epimorphism\/}, if all
homomorphisms $f_n$ are surjective.

Note that the construction of the category
$\overline\cC(A_*)$ is functorial with respect to homomorphisms of
multiplicative sequences: a homomorphism $f_*:A_*\to B_*$
defines a monoidal functor
$\overline\cC(f_*):\overline\cC(A_*)\to\overline\cC(B_*)$.

Now we denote by $\cC(A_*)$ the category
$\Funct\big(\overline\cC(A_*)^{op},\Vect\big)$
of $k$-linear functors from
the opposite category $\overline\cC(A_*)^{op}$ to the category of
vector spaces over $k$. As a category, $\cC(A_*)$ is the direct sum of
categories of right modules $\oplus_n \Mod\dda A_n$. The monoidal
structure on $\cC(A_*)$ (induced by the monoidal structure on
$\overline\cC(A_*)$ via Day's convolution) is given by
\beql{monoi}
M\ot_{A_*} N
= (M\otimes N)\otimes_{A_m\otimes A_n}A_{m+n}.
\eeq
Here $M$ is an
$A_m$-module, $N$ is an $A_n$-module, and $A_{m+n}$ is considered as
a left $A_m\otimes A_n$-module via the homomorphism $\mu_{m,n}$. We
call the monoidal category $\cC(A_*)$ the {\it Schur--Weyl category}
corresponding to the multiplicative sequence of algebras $A_*$.
The subscript $A_*$ of the tensor product sign in \eqref{monoi}
will usually be omitted, if no confusion is possible as to what
multiplicative sequence is used or what monoidal category
is considered.

Suppose that $\overline\cC$ is a $k$-linear category and $\cC$ is an
abelian category together with a fully-faithful $k$-linear functor
$F:\overline\cC\to\cC$. We call $\cC$ an {\it abelian envelope} of
$\overline\cC$ if any object of $\cC$ is a subquotient of a direct
sum of objects from the image of $F$.

\begin{lem}
The category  $\cC(A_*)$ is an abelian envelope of
$\overline\cC(A_*)$.
\end{lem}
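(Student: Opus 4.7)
The plan is to unpack the definitions and verify that the natural inclusion $F\colon\overline\cC(A_*)\to\cC(A_*)$ is (a shift of) the Yoneda embedding, so that fullness/faithfulness is automatic, and then to exhibit every object of $\cC(A_*)$ as a quotient of a direct sum of representables.

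First I would identify the functor $F$. Since $\cC(A_*)=\Funct\big(\overline\cC(A_*)^{op},\Vect\big)$, sending an object $[n]$ to the representable functor $\overline\cC(A_*)(-,[n])$ defines the Yoneda embedding $F\colon\overline\cC(A_*)\to\cC(A_*)$. Using the fact that $\overline\cC(A_*)$ has no morphisms between distinct objects and $\End_{\overline\cC(A_*)}([n])=A_n$, the representable $F([n])$ vanishes outside degree $n$ and equals $A_n$, regarded as a right module over itself, in degree $n$. Under the identification $\cC(A_*)=\oplus_n\Mod\dda A_n$, the image of $F$ consists precisely of the free rank-one right $A_n$-modules sitting in the respective components.

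Next, full faithfulness of $F$ is the Yoneda lemma: for any $[m],[n]$,
\[
\cC(A_*)\big(F([m]),F([n])\big)\;=\;F([n])([m])\;=\;\overline\cC(A_*)([m],[n]),
\]
which for $m\ne n$ is $0$ and for $m=n$ is $A_n=\End_{\overline\cC(A_*)}([n])$. Thus the natural map $\overline\cC(A_*)([m],[n])\to\cC(A_*)(F([m]),F([n]))$ is an isomorphism.

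For the abelian envelope condition, take any object $M\in\cC(A_*)$. Under the decomposition $\cC(A_*)=\oplus_n\Mod\dda A_n$, it corresponds to a family $(M_n)_{n\geqslant 0}$ with $M_n$ a right $A_n$-module. Each $M_n$ is a quotient of a free right $A_n$-module $A_n^{(I_n)}$ (pick any generating set, e.g.\ all of $M_n$), and the latter is a direct sum of copies of $F([n])$ in the $n$-th component of $\cC(A_*)$. Assembling these over $n$ gives a surjection $\bigoplus_n F([n])^{(I_n)}\twoheadrightarrow M$, so $M$ is in fact a quotient, and in particular a subquotient, of a direct sum of objects in the image of $F$.

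There is no real obstacle here; the only point to be careful about is the identification of the functor category with $\oplus_n\Mod\dda A_n$ (which uses that $\overline\cC(A_*)$ is the disjoint union of one-object categories indexed by $n$, with endomorphism algebra $A_n$) and the compatibility of the Yoneda embedding with this identification. Once those are set up, both halves of the statement follow immediately.
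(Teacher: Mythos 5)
Your proof is correct and follows essentially the same route as the paper's: identify the embedding with the Yoneda functor, observe that $[n]$ is sent to $A_n$ as a right module over itself, invoke the Yoneda lemma for full faithfulness, and use that every right $A_n$-module is a quotient of a free one. You simply spell out more of the bookkeeping (the degreewise decomposition and the assembly of the surjection) than the paper does.
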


\begin{proof}
By its definition, the category $\cC(A_*)$ is abelian. The category
$\overline\cC(A_*)$ is fully and faithfully embedded into
$\overline\cC(A_*)$ via the Yoneda functor $Y\mapsto
\overline\cC(A_*)(-,Y)$. Explicitly, the image of $[n]$ via this
embedding is $A_n$ considered as a module over itself. It is clear
that any
object in $\Mod\dda A_n$ is a quotient of a direct sum of copies of
$A_n$.
\end{proof}

Note that the construction of $\cC(A_*)$ is functorial with respect to
homomorphisms of multiplicative sequences: a homomorphism
$f_*:A_*\to B_*$ of multiplicative sequences defines a functor
$$
\cC(f_*):\cC(A_*)\to\cC(B_*),\qquad \cC(f_*)(M)
= M\otimes_{A_m}B_m,
$$
if $M$ is an $A_m$-module.
Moreover, this functor is monoidal:
$$\xymatrix{\cC(f_*)(M\ot_{A_*} N) \ar@{=}[r]
&(M\ot_{A_*} N)\otimes_{A_{m+n}}B_{m+n}\ar[d]\\ &(M\otimes
N)\otimes_{A_m\otimes A_n}B_{m+n} \ar[d] \\ \cC(f_*)(M)\ot_{B_*}
\cC(f_*)(N) \ar@{=}[r] &\big((M\otimes_{A_m}
B_m)\otimes(N\otimes_{A_n}B_n)\big)\otimes_{B_m\otimes B_n}B_{m+n}.}
$$

The following results will be important for proving monoidality of the
right adjoint of $\cC(f_*)$. We start with a
technical definition. We will say that an algebra $A$ admits a {\it
multiplicative decomposition} $A=BC$,
if $B$ and $C$ are
subalgebras of $A$ and any element of $A$ can be uniquely written as
a product of elements of $B$ and $C$, i.e. the multiplication in $A$
induces the isomorphism of vector spaces $B\otimes C\to A$.

\begin{lem}\label{tmd}
Let $A$ be an algebra with multiplicative decomposition $A=BC$ and
let $A'\subset A$ be a subalgebra with multiplicative decomposition
$A'=BC'$, where $C'\subset C$ is a subalgebra. Then for any
$A'$-module $M$ the natural homomorphism
\begin{equation}\label{ct}
M\otimes_{C'}C\to M\otimes_{A'}A,\qquad m\otimes c\mapsto m\otimes c,
\end{equation}
induced by the embedding
$C\subset A$, is an isomorphism of $C$-modules.
\end{lem}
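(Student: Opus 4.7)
The plan is to reduce (\ref{ct}) to associativity of the tensor product via a bimodule isomorphism $\mu\se A' \otimes_{C'} C \xrightarrow{\sim} A$ induced by multiplication, $a' \otimes c \mapsto a' c$. Once this is established, the chain
\ben
M \otimes_{C'} C \;\cong\; M \otimes_{A'} A' \otimes_{C'} C \;\xrightarrow{\,M \otimes_{A'}\mu\,}\; M \otimes_{A'} A
\een
yields the required right $C$-module isomorphism, and tracing a pure tensor $m \otimes c$ through it shows that the composite is exactly the map (\ref{ct}).

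Constructing $\mu$ and proving it an isomorphism is the key step. The multiplication map is well defined on $\otimes_{C'}$ because for $c'' \in C' \subset A' \cap C$ both $(a'c'') \otimes c$ and $a' \otimes (c'' c)$ are sent to $a'(c''c)$. To see that $\mu$ is bijective I would use the two multiplicative decompositions to write the sequence of identifications
\ben
A' \otimes_{C'} C \;\cong\; (B \otimes_k C') \otimes_{C'} C \;\cong\; B \otimes_k C \;\cong\; A,
\een
in which the outer arrows are the vector-space isomorphisms supplied by the hypothesis and the middle one is the obvious cancellation $C' \otimes_{C'} C \cong C$. Left $A'$-linearity of $\mu$ is then automatic from associativity of multiplication in $A$, so $\mu$ is in fact an $(A',C)$-bimodule isomorphism.

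The only real obstacle is the bookkeeping in the previous display: one must check that the outer identifications are equivariant for the right $C'$- (respectively right $C$-)action on the second tensor factor, so that the middle cancellation is legitimate and the composite agrees on the nose with $\mu$ (rather than merely yielding an abstract vector-space isomorphism of the correct dimensions). Both compatibilities follow immediately from the fact that, under $B \otimes_k C' \xrightarrow{\sim} A'$ and $B \otimes_k C \xrightarrow{\sim} A$, right multiplication by an element of $C'$ or $C$ becomes right multiplication on the second factor; this is built into the definition of the multiplicative decomposition. With $\mu$ in hand the remainder of the argument, as sketched in the first paragraph, is formal.
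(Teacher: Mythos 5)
Your proof is correct, but it takes a more structural route than the paper's. The paper argues directly at the level of the map \eqref{ct}: it writes down an explicit candidate inverse $M\otimes_{A'}A\to M\otimes_{C'}C$, sending $m\otimes a=m\otimes bc$ to $mb\otimes c$ (using the unique factorization $a=bc$ and the fact that $b\in B\subset A'$ acts on $M$), and then asserts that well-definedness and the inverse property are straightforward checks. You instead isolate the bimodule isomorphism $\mu\colon A'\otimes_{C'}C\to A$ and deduce \eqref{ct} from associativity of the tensor product, $M\otimes_{C'}C\cong M\otimes_{A'}A'\otimes_{C'}C$. The two arguments rest on the same computation — cancelling the common factor $B$ between the decompositions $A=BC$ and $A'=BC'$ — but your packaging has a concrete advantage: the only well-definedness check you need for $\mu$ is over $c''\in C'$, which is immediate, whereas the paper's "straightforward" check that $m\otimes a'a$ and $ma'\otimes a$ have the same image requires the small extra observation that for $a'\in A'$ and $b\in B$ the product $a'b$ lies in $A'=BC'$, so its unique $BC$-factorization has second components in $C'$ and these can be moved across $\otimes_{C'}$. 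The price you pay is invoking the standard associativity and cancellation isomorphisms and verifying their equivariance, which you do correctly; the paper's proof is shorter on the page but hides slightly more. Either write-up would be acceptable here.
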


\begin{proof}
Define the inverse to the homomorphism (\ref{ct}) as follows. For
$a\in A$ write $a=bc$ for unique $b\in B$ and $c\in C$.
Define the image
of the element
$m\otimes a =
m\otimes bc\in M\otimes_{A'}A$ to be
$mb\otimes c\in M\otimes_{C'}C$.
It is straightforward to check that this is a well-defined map,
which is inverse to the map  (\ref{ct}).
\end{proof}

The next theorem gives a sufficient condition which guarantees
that the restriction functor along an embedding of multiplicative
sequences of algebras is monoidal.

\begin{thm}\label{aff}
Suppose that $C_n\subset A_n$, $n=0,1,\dots$, is an embedding
of multiplicative sequences of
algebras satisfying the condition that for
each $n$ the algebra $A_n$ admits a multiplicative
decomposition
$A_n=A^{\otimes n}C_n$ for a certain algebra $A$. Then the restriction
functor $\cC(A_*)\to \cC(C_*)$ along the embedding $C_*\subset A_*$
is monoidal.
\end{thm}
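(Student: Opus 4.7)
The plan is to unwind the monoidal structures on both sides and reduce the claim to an application of Lemma~\ref{tmd}. Let $\operatorname{Res}$ denote the restriction functor along $C_*\subset A_*$; on an $A_n$-module $M$ it returns $M$ viewed as a $C_n$-module. Unfolding \eqref{monoi}, the desired monoidality isomorphism takes the form of a natural $C_{m+n}$-linear map
\ben
(M\otimes N)\otimes_{C_m\otimes C_n}C_{m+n}\longrightarrow (M\otimes N)\otimes_{A_m\otimes A_n}A_{m+n}
\een
induced by the inclusion $C_{m+n}\hookrightarrow A_{m+n}$. Since the actions of $A_m\otimes A_n$ on $A_{m+n}$ and of $C_m\otimes C_n$ on $C_{m+n}$ factor through the images $A':=\mu_{m,n}(A_m\otimes A_n)\subset A_{m+n}$ and $C':=\mu_{m,n}(C_m\otimes C_n)\subset C_{m+n}$, this map may equivalently be rewritten as $(M\otimes N)\otimes_{C'}C_{m+n}\to(M\otimes N)\otimes_{A'}A_{m+n}$, which is exactly the setup of Lemma~\ref{tmd}.

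To deploy that lemma, I take the ambient algebra to be $A=A_{m+n}$ with its given multiplicative decomposition $A=BC$, where $B=A^{\otimes(m+n)}$ and $C=C_{m+n}$. The key algebraic step is then to verify that the subalgebra $A'\subset A$ admits the multiplicative decomposition $A'=BC'$ with $C'\subset C$ as defined above. Three ingredients are at work here: the inclusions $\mu_{m,n}(A^{\otimes m}\otimes A^{\otimes n})\subset A^{\otimes(m+n)}$ and $\mu_{m,n}(C_m\otimes C_n)\subset C_{m+n}$, which reflect that both $A^{\otimes\bullet}$ and $C_*$ are multiplicative sub-sequences of $A_*$; the commutation in $A_{m+n}$ of $\mu_{m,n}(x\otimes 1)$ with $\mu_{m,n}(1\otimes y)$, immediate from $\mu_{m,n}$ being an algebra homomorphism; and the uniqueness of the factorization in $A_{m+n}=A^{\otimes(m+n)}\otimes C_{m+n}$. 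Combined, they let me rewrite
\ben
\mu_{m,n}(a_1c_1\otimes a_2c_2)=\mu_{m,n}(a_1\otimes a_2)\cdot \mu_{m,n}(c_1\otimes c_2)\in B\cdot C',
\een
and uniqueness of factorization upgrades this to the statement that multiplication induces an isomorphism $B\otimes C'\xrightarrow{\sim} A'$.

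With the hypothesis of Lemma~\ref{tmd} verified, applying it to the $A'$-module $M\otimes N$ produces the required $C_{m+n}$-module isomorphism. Naturality in $M$ and $N$ is immediate from the explicit formula for the inverse given in the proof of Lemma~\ref{tmd}, and the coherence (associator and unit) axioms for $\operatorname{Res}$ as a monoidal functor follow from the fact that all the structure maps in sight are induced by the algebra inclusions $C_n\hookrightarrow A_n$, which are compatible with any rebracketing and with the unit $A_0=C_0=k$.

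The main obstacle is the second step: establishing the multiplicative decomposition $A'=BC'$. The commutation between the $A^{\otimes(m+n)}$-part arising from the factors $A^{\otimes m}$, $A^{\otimes n}$ and the $C_{m+n}$-part arising from $C_m$, $C_n$ inside $\mu_{m,n}(A_m\otimes A_n)$ is not automatic from either multiplicative sub-sequence taken separately; it is precisely the cross-compatibility between the multiplicative decompositions of $A_m$, $A_n$, and $A_{m+n}$, packaged by the hypothesis of the theorem, that forces this decomposition to hold. Once this is in place the rest of the proof is essentially a bookkeeping exercise in applying Lemma~\ref{tmd}.
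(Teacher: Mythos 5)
Your proof is correct and follows essentially the same route as the paper: both reduce the monoidality isomorphism to Lemma~\ref{tmd} applied with $B=A^{\otimes(m+n)}$, $C=C_{m+n}$, $C'=\mu_{m,n}(C_m\otimes C_n)$ and the module $M\otimes N$, using the decomposition $A_m\otimes A_n=(A^{\otimes m}C_m)\otimes(A^{\otimes n}C_n)=A^{\otimes(m+n)}(C_m\otimes C_n)$ inside $A_{m+n}$. The only difference is that you spell out the verification of $A'=BC'$ (commutation of the two images under $\mu_{m,n}$ and uniqueness of factorization), which the paper compresses into a one-line identity.
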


\begin{proof}
Note that the algebra $A_m\otimes A_n$ has a multiplicative
decomposition
$$
A_m\otimes A_n = (A^{\otimes m}C_m)\otimes(A^{\otimes n}C_n) =
A^{\otimes(m+n)}(C_m\otimes C_n).
$$
Thus, by Lemma~\ref{tmd} the natural morphism
$$
M\ot_{C_*}N = (M\otimes N) \otimes_{C_m\otimes C_n}C_{m+n}\to
(M\otimes N)\otimes_{A_m\otimes A_n}A_{m+n} = M\ot_{A_*}N
$$
is an isomorphism.
The coherence condition for this isomorphism is straightforward
from its definition; cf. \cite[Ch.~VII]{m:cw}.
\end{proof}

\subsection{Generators and relations}

We start by defining free multiplicative sequences of algebras.
Let $V = \{V_l\ |\ l\geqslant 1\}$ be a
collection of vector spaces $V_l$
parameterized by positive integers.
Define another collection of vector
spaces $\{M(V)_m \ |\ m\geqslant 1\}$ by
$$
M(V)_m = \bigoplus_{l=1}^m\Big(\bigoplus_{i=0}^{m-l}V_l(i)\Big),
$$
where $V_l(i) = \{v(i) \ |\ v(i)\in V_l\}$ is a copy of the space
$V_l$ labeled by the index $i$.
The components $M(V)_m$ are connected by linear maps
$$\alpha_{m,n}:M(V)_m\oplus M(V)_n\to M(V)_{m+n},$$
where for any $1\leqslant l\leqslant m,\ 1\leqslant k\leqslant n$ and
$0\leqslant i\leqslant m-l,\ 0\leqslant j\leqslant n-k$ we have
$$
\alpha_{m,n}(v(i),w(j)) = v(i) + w(m+j),\qquad v\in V_l,\quad w\in V_k.
$$
By the definition of these maps, the following diagram commutes:
$$
\xymatrix{ M(V)_l\oplus M(V)_m\oplus
M(V)_n \ar[rr]^{\quad\alpha_{l,m}\oplus I}
\ar[d]_{I\oplus\alpha_{m,n}} &&
M(V)_{l+m}\oplus M(V)_n \ar[d]^{\alpha_{l+m,n}} \\
M(V)_l\oplus M(V)_{m+n} \ar[rr]^{\alpha_{l,m+n}} && M(V)_{l+m+n} .}
$$
Now let $T\big(M(V)_m\big)$ denote the tensor algebra
of the vector space $M(V)_m$.
Denote by
$J_m$ the two-sided ideal of $T\big(M(V)_m\big)$ generated by
all elements of the form
$v(i)w(j)-w(j)v(i)$, where $v\in V_l,\ w\in V_k$
with $1\leqslant l\leqslant m$, $1\leqslant k\leqslant n$,
and
the indices $i$ and $j$ satisfy the conditions
$0\leqslant i\leqslant m-l$, $0\leqslant j\leqslant n-k$ and
\ben
\{i+1,i+2,\dots,i+l\}\cap \{j+1,j+2,\dots,j+k\}=\varnothing.
\een
For $m\geqslant 1$ denote by $A(V)_m$ the quotient algebra
$T\big(M(V)_m\big)/J_m$ and set $A(V)_0=k$.
By the definition of the maps $\alpha_{m,n}$, the difference
$$\alpha_{m,n}(x,0) - \alpha_{m,n}(0,y)$$
belongs to $J_{m+n}$
for any $x\in M(V)_m$ and $y\in M(V)_n$.
Hence, the natural homomorphisms
$$T(\alpha_{m,n}):T\big(M(V)_m\oplus
M(V)_n\big)\to T\big(M(V)_{m+n}\big)/J_{m+n}$$
induced by the linear
maps $\alpha_{m,n}$ factor through
$T\big(M(V)_m\big)\otimes T\big(M(V)_n\big)$.
Moreover, the kernel of the homomorphism
$$T\big(M(V)_m\big)\otimes T\big(M(V)_n\big)\to T
\big(M(V)_{m+n}\big)/J_{m+n}$$
contains $J_m\otimes 1 + 1\otimes J_n$.
Thus we have the algebra homomorphisms
\ben
\mu_{m,n}:A(V)_m\otimes A(V)_n \to A(V)_{m+n}
\een
which turn
$A(V)_*=\{A(V)_m\ |\ m\geqslant 0\}$ into a
multiplicative sequence.
The sequence $A(V)_*$ is the {\it free multiplicative
sequence of algebras\/} generated by the collection $V$
of vector spaces in the sense that given
any multiplicative sequence of algebras
$A_*=\{A_n\ |\ n\geqslant 0\}$, the homomorphisms
of multiplicative sequences $A(V)_*\to A_*$
are in one-to-one correspondence with the sequences of
vector space homomorphisms $V_l\to A_l$, $l\geqslant 1$.

Now we will use free multiplicative
sequences to give the definition of
a multiplicative sequence of algebras $A_*$ {\it generated by a
family $\{a_i\}_{i\in I}$ of elements} $a_i\in A_{n(i)}$,
$n(i)\geqslant 1$. To this end, we
let $\{\wh a_i\}_{i\in I}$ be a set equipped with
the degree function $\deg\wh a_i=n(i)$.
We will use this set to span a collection $V$ of
vector spaces over the field $k$ by setting
\beql{colvs}
V_l = \text{span of}\ \ \{\wh a_i \ |\ \deg\wh a_i=l\}.
\eeq
We will say that a
multiplicative sequence of algebras $A_*$ is generated by a
family $\{a_i\}_{i\in I}$ of elements $a_i\in A_{n(i)}$ if there is an
epimorphism of multiplicative sequences $A(V)_*\to A_*$
such that $\wh a_i\mapsto a_i$ for all $i\in I$.

Consider the collection of vector spaces
$K_*=\{K_n\ |\ n\geqslant 1\}$, where $K_n$ is the kernel
of the epimorphism $A(V)_n\to A_n$.
Suppose that $\{p_j\}_{j\in J}$ is a
set of elements in the kernels, $p_j\in K_{m(j)}$. Let
$P=\{P_l\ |\ l\geqslant 1\}$ be the collection of vector
spaces spanned by this set so that
\ben
P_l = \text{span of}\ \ \{p_j \ |\ \deg p_j=l\}.
\een
We will say that a multiplicative
sequence of algebras $A_*$ is generated by a set
$\{a_i\}_{i\in I}$ of elements $a_i\in A_{n(i)}$ {\it subject to
the relations}
\beql{msrel}
p_j\big(\{a_i\}_{i\in I}\big) = 0,\quad j\in J,
\eeq
if for any $n\geqslant 1$ the
two-sided ideal $K_n$ of $A(V)_n$ is generated by $M(P)_n$.

Furthermore, using free multiplicative sequences
in a similar way, we can now
give the definition of
a {\it monoidal category $\cC$ generated by its
object $X$ and a family $\{a_i\}_{i\in I}$ of endomorphisms
$a_i\in\End_\cC(X^{\otimes n(i)})$ subject to relations}
\beql{catrel}
p_j\big(\{a_i\}_{i\in I}\big) = 0,\quad j\in J.
\eeq
Namely, exactly as above, we
let $\{\wh a_i\}_{i\in I}$ be a set with
the degree function $\deg\wh a_i=n(i)$ and introduce
a collection $V=\{V_l\ |\ l\geqslant 1\}$
of vector spaces by \eqref{colvs}.
The homomorphism of
multiplicative sequences
$A(V)_n\to \End_{\cC}(X^{\otimes n})$, $\wh a_i\mapsto a_i$,
allows us
to think of the elements of $A(V)_*$ as endomorphisms of
the tensor powers of $X$. In particular, it allows us to say that
the endomorphisms $a_i$ satisfy a relation
$p\big(\{a_i\}_{i\in I}\big)=0$, where $p\in A(V)_*$.

We will say that a monoidal category $\cC$ is generated by its
object $X$ and a set $\{a_i\}_{i\in I}$ of endomorphisms $a_i\in
\End_\cC(X^{\otimes n(i)})$ subject to the relations \eqref{catrel},
if for any monoidal category $\caD$ the assignment
$F\mapsto (F(X), F(a_i)_{i\in I})$ defines an equivalence between
the category of monoidal functors $\cC\to\caD$ and
the category whose objects are collections
$(Y,\{b_i\}_{i\in I})$, where $Y$ is an object
of $\caD$ and
$\{b_i\}_{i\in I}$ is a set of endomorphisms
$b_i\in\End_\caD(Y^{\otimes n(i)})$
such that $p_j\big(\{b_i\}_{i\in I}\big) = 0$ for all $j\in J.$

\begin{thm}\label{gr}
Let $A_*$ be a multiplicative sequence of algebras generated by a
set $\{a_i\}_{i\in I}$ of elements $a_i\in A_{n(i)}$ subject to
certain relations \eqref{msrel}.
Then, as a monoidal category, the Schur--Weyl
category $\cC(A_*)$ is generated  by the object
$X = [1]$ and the set $\{a_i\}_{i\in I}$ of endomorphisms $a_i\in
\End_{\cC(A_*)}(X^{\otimes n(i)})$ subject to
the relations \eqref{catrel}.
\end{thm}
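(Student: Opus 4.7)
The plan is to establish the universal property by a two-step reduction: first I characterize monoidal functors out of $\overline\cC(A_*)$ in the desired terms, then I lift this characterization to $\cC(A_*)$ using the Day convolution description of its monoidal structure.

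For the first step, I would argue that a monoidal $k$-linear functor $F:\overline\cC(A_*)\to\caD$ is the same data as an object $Y=F(X)\in\caD$ together with a homomorphism of multiplicative sequences $\phi_*:A_*\to \End_\caD(Y^{\otimes *})$, where the target carries its natural multiplicative structure induced by the tensor product on morphisms. This is essentially built into the construction of $\overline\cC(A_*)$ in Section~\ref{subsec:ms}: its objects are the $[n]=X^{\otimes n}$, there are no morphisms between different objects, the endomorphism algebras are the $A_n$, and the tensor product on morphisms is governed by $\mu_{m,n}$; hence such an $F$ is determined by $Y$ and the coherent family of algebra maps $A_n\to\End_\caD(Y^{\otimes n})$. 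Now I combine this with the generator-and-relation presentation: since $A_*$ is the quotient of the free multiplicative sequence $A(V)_*$ by the ideals generated by the $p_j$, homomorphisms $A_*\to B_*$ into any multiplicative sequence $B_*$ correspond bijectively to collections $b_i\in B_{n(i)}$ satisfying $p_j(\{b_i\})=0$, via the universal property of $A(V)_*$. Taking $B_*=\End_\caD(Y^{\otimes *})$ and chaining the two bijections yields an equivalence between monoidal functors $\overline\cC(A_*)\to\caD$ and pairs $(Y,\{b_i\})$ satisfying \eqref{catrel}; the matching of morphisms on the two sides (monoidal natural transformations versus intertwiners in $\caD$) is routine.

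To promote the equivalence from $\overline\cC(A_*)$ to $\cC(A_*)$, I would invoke the Day convolution description of \eqref{monoi}: the category $\cC(A_*)=\Funct(\overline\cC(A_*)^{op},\Vect)$ is the free $k$-linear cocompletion of $\overline\cC(A_*)$, and its monoidal structure extends the one on $\overline\cC(A_*)$ through the Yoneda embedding. The universal property of Day convolution then identifies monoidal $k$-linear functors $\cC(A_*)\to\caD$ (respecting the relevant cocomplete structure) with monoidal $k$-linear functors $\overline\cC(A_*)\to\caD$ by restriction along Yoneda, and composing this with the first step completes the argument. The main obstacle here is purely foundational: pinning down the exact class of monoidal functors $\cC(A_*)\to\caD$ that the universal property applies to, since $\cC(A_*)$ is abelian and cocomplete while $\caD$ in the definition is merely monoidal. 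For the cocomplete $k$-linear targets $\caD$ appearing in later sections (representation categories of $\gl_N$, of (degenerate) affine Hecke algebras, of Yangians and quantum loop algebras) the Day convolution universal property applies directly; alternatively, since every object of $\cC(A_*)$ is a subquotient of a direct sum of objects coming from $\overline\cC(A_*)$ by the abelian envelope lemma, any monoidal functor out of $\cC(A_*)$ is determined by its restriction to $\overline\cC(A_*)$, reducing the problem to the already-established correspondence.
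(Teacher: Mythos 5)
Your proposal is correct and follows essentially the same route as the paper: both arguments reduce monoidal functors out of the Schur--Weyl category to homomorphisms of multiplicative sequences $A_*\to\End_\caD(Y^{\otimes *})$ and then invoke the universal property of the free multiplicative sequence $A(V)_*$ to match these with families $\{b_i\}$ satisfying the relations. The only difference is that you make explicit the intermediate passage through $\overline\cC(A_*)$ and the Day-convolution/abelian-envelope step needed to extend from $\overline\cC(A_*)$ to $\cC(A_*)$, a point the paper's proof asserts without comment.
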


\begin{proof}
Suppose that $\caD$
is a monoidal category and consider the category
of monoidal functors $\cC(A_*)\to\caD$. This functor category
is equivalent to the category
with the objects $(Y,g_*)$,
where $Y$ is an object of $\caD$ and
$g_*=\{g_n\ |\ n\geqslant 0\}$
is a homomorphism of multiplicative
sequences
\beql{monh}
g_n:A_n\to \End_\caD(Y^{\otimes n}),\qquad n\geqslant 0.
\eeq
Now if the multiplicative sequence of algebras $A_*$ is generated by
a set $\{a_i\}_{i\in I}$ of elements $a_i\in A_{n(i)}$ subject to
the relations \eqref{msrel}, then the homomorphisms
\eqref{monh} are in a one-to-one correspondence
with the families of endomorphisms $\{b_i\}_{i\in I}$ with
$b_i\in\End_\caD(Y^{\otimes n(i)})$ such that
$p_j\big(\{b_i\}_{i\in I}\big) = 0$ for all $j\in J$.
\end{proof}

\section{Symmetric groups and general linear Lie algebras}\label{sym}
\setcounter{equation}{0}

We start with the simplest case of the classical
Schur--Weyl duality, namely the duality
between the symmetric groups and the general linear Lie algebras.

\subsection{Symmetric groups and their group algebras}

Consider the standard presentation of the group $S_n$
of permutations of the set $\{1,\dots,n\}$ so that
$S_n$ is generated by the elements
$t_1,\dots,t_{n-1}$ subject to the relations:
\beql{cox}
t_i^2=1,\qquad t_it_{i+1}t_i=t_{i+1}t_it_{i+1},
\qquad t_it_j=t_jt_i\quad\text{for}\quad |i-j|>1.
\eeq
The assignments
$$
t_i\otimes 1\mapsto t_i,\qquad 1\otimes t_j\mapsto t_{j+m},
$$
define natural algebra homomorphisms
$$
\mu_{m,n}:k[S_m]\otimes k[S_n]\to k[S_{m+n}]
$$
which satisfy the associativity axiom; see Sec.~\ref{subsec:ms}.
Thus we get a multiplicative sequence of algebras
$k[S_*]=\{k[S_n]\ |\ n\geqslant 0\}$.

\begin{prop}\label{gen}
The multiplicative sequence $k[S_*]$ is generated by
the element $t\in k[S_2]$
subject to the relations $t^2=1$ in $k[S_2]$ and
$$
\mu_{2,1}(t\otimes 1)\mu_{1,2}(1\otimes t)\mu_{2,1}(t\otimes 1) =
\mu_{1,2}(1\otimes t)\mu_{2,1}(t\otimes 1)\mu_{1,2}(1\otimes t)
\qquad
\text{in}\quad k[S_3].
$$
\end{prop}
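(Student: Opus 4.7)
The plan is to unwind the definition of ``generated by elements subject to relations'' for multiplicative sequences and to recognise the quotient, in each degree $n$, as the Coxeter presentation of $k[S_n]$.

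First, I take the collection $V=\{V_l\}$ with $V_2=k\tss\wh t$ and $V_l=0$ for $l\neq 2$, so that $\deg\wh t=2$. The free multiplicative sequence $A(V)_*$ then has
$M(V)_m=\bigoplus_{i=0}^{m-2}k\tss\wh t(i)$, and $A(V)_m$ is the tensor algebra on the symbols $\wh t(0),\wh t(1),\dots,\wh t(m-2)$ modulo the commutation relations $\wh t(i)\wh t(j)=\wh t(j)\wh t(i)$ whenever $\{i+1,i+2\}\cap\{j+1,j+2\}=\varnothing$, i.e.\ whenever $|i-j|>1$. The assignment $\wh t(i)\mapsto t_{i+1}\in k[S_{m}]$ plainly respects the commutation relations above and the multiplication maps $\mu_{m,n}$ of the multiplicative structure, so it defines a homomorphism of multiplicative sequences $A(V)_*\to k[S_*]$. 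Since $k[S_n]$ is generated as an algebra by the transpositions $t_1,\dots,t_{n-1}$, this homomorphism is an epimorphism; this establishes the generation by $t$.

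Next, I need to identify the kernel $K_*$ as the ideal generated by $M(P)_*$, where $P_2$ is spanned by $\wh t(0)^2-1\in A(V)_2$ and $P_3$ is spanned by the braid element
\ben
\wh t(0)\wh t(1)\wh t(0)-\wh t(1)\wh t(0)\wh t(1)\in A(V)_3,
\een
and $P_l=0$ otherwise. By the definition of $M(P)_n$, its image in $A(V)_n$ contains precisely the translates $\wh t(i)^2-1$ for $0\leqslant i\leqslant n-2$ and $\wh t(i)\wh t(i+1)\wh t(i)-\wh t(i+1)\wh t(i)\wh t(i+1)$ for $0\leqslant i\leqslant n-3$. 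Passing to the quotient $A(V)_n/(M(P)_n)$ and writing $t_{i+1}=\wh t(i)$, one sees that this quotient is the algebra generated by $t_1,\dots,t_{n-1}$ subject to exactly the Coxeter relations \eqref{cox}. It therefore coincides with $k[S_n]$ via the map $\wh t(i)\mapsto t_{i+1}$, so this map identifies $K_n$ with the ideal generated by $M(P)_n$.

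The only real content beyond bookkeeping is the last step: that the Coxeter relations are a \emph{complete} set of relations for $k[S_n]$. I would invoke this as a classical fact (Moore's presentation of the symmetric group) rather than reprove it. Everything else is formal verification that the translates of the two stated relations generated in $M(P)_n$ give precisely the square relations $t_i^2=1$ and the braid relations $t_it_{i+1}t_i=t_{i+1}t_it_{i+1}$, while the far commutation relations $t_it_j=t_jt_i$ for $|i-j|>1$ are already imposed in $A(V)_n$ itself. Combining these observations with Theorem~\ref{gr} (or simply with the definition of a multiplicative sequence given by generators and relations) completes the proof.
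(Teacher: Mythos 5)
Your proof is correct and follows essentially the same route as the paper, whose own proof is the one-line remark that the claim ``follows from the presentation of $S_n$'' with $t=t_1$; you have simply unwound the definitions of the free multiplicative sequence $A(V)_*$ and of $M(P)_n$ to show that the quotient in degree $n$ is exactly the Coxeter presentation \eqref{cox} of $k[S_n]$. The bookkeeping (the far commutation relations living in $J_n$, the translates of the two relations spanning $M(P)_n$, and the appeal to the classical completeness of the Coxeter presentation) is all accurate.
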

\begin{proof}
This follows from the presentation of $S_n$; the element
$t=t_1$ is the non-identity element of $S_2$.
\end{proof}

\subsection{The free symmetric category}

As an immediate consequence of Theorem~\ref{gr}
and Proposition~\ref{gen}
we have the following universal property of
the category $\cC(k[S_*])$.
\bth\label{thm:freesym}
The abelian monoidal category $\cS=\cC(k[S_*])$ is generated by an
object $X$ and an automorphism $c:X\otimes X\to X\otimes X$
subject to the relations
\begin{equation}\label{iyb}
c^2=1,\qquad (c\otimes 1_X)(1_X\otimes c)(c\otimes 1_X) = (1_X\otimes
c)(c\otimes 1_X)(1_X\otimes c).
\end{equation}
\eth

We will call
$X$ and $c$ satisfying \eqref{iyb}, an {\it
involutive Yang--Baxter object} and
an {\it involutive Yang--Baxter operator},
respectively.
Theorem~\ref{thm:freesym}
states that $\cS$ is a free abelian monoidal category
generated by an involutive
Yang--Baxter object.

We also formulate another universal property of $\cS$,
which will be used in the next section.
Let $c_{m,n}\in S_{m+n}$ be the $(m,n)$-shuffle preserving the orders
of the first $m$ and last $n$ letters. That is, under this permutation,
\ben
i\mapsto i+n,\quad i=1,\dots,m,\qquad
j\mapsto j-m,\quad j=m+1,\dots,m+n.
\een
The collection $\{c_{m,n}\ |\ m,n\geqslant 0\}$ of the
isomorphisms
$c_{m,n}:X^{\otimes m}\otimes X^{\otimes n}\to X^{\otimes n}\otimes
X^{\otimes m}$ defines a symmetry on the
category $\cS$; see \cite{m:cw}. The following freeness property of
the category $\cS$ is well-known; see \cite{js:bt}.

\begin{prop}\label{prop:free-ab-sym}
The category $\cS$ is a free abelian symmetric monoidal category
generated by one object.
\end{prop}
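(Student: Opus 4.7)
The plan is to reduce the claim to the classical result of Joyal--Street \cite{js:bt} on the permutation category, combined with the abelian envelope / Day convolution construction of Section~\ref{subsec:ms}.

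Recall from \cite{js:bt} that the skeletal symmetric strict monoidal category $\overline{\cS}$ --- whose objects are the natural numbers, with $\End([n])=k[S_n]$, and whose symmetry is given by the shuffles $c_{m,n}$ --- is the free $k$-linear symmetric strict monoidal category on one generator $X=[1]$. Concretely, for any $k$-linear symmetric monoidal category $\caD$ with symmetry $\sigma$ and any object $Y\in\caD$, the assignment $X\mapsto Y$ extends essentially uniquely to a symmetric monoidal functor $\overline{F}:\overline{\cS}\to\caD$: the transposition $t_i$ is forced to $1_{Y^{\otimes(i-1)}}\otimes\sigma_{Y,Y}\otimes 1_{Y^{\otimes(n-i-1)}}$, the relations \eqref{cox} then hold in $\caD$ as consequences of the symmetry and hexagon axioms, and the shuffle $c_{m,n}$ is automatically sent to $\sigma_{Y^{\otimes m},Y^{\otimes n}}$.

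Next, by the abelian envelope lemma in Section~\ref{subsec:ms}, $\cS=\Funct(\overline{\cS}^{\,op},\Vect)$ is the abelian envelope of $\overline{\cS}$; its monoidal product \eqref{monoi} is the Day convolution extending the tensor product on $\overline{\cS}$, and it inherits a canonical symmetry from that of $\overline{\cS}$. By the universal property of presheaf categories combined with that of Day convolution, $\overline{F}$ extends, essentially uniquely, to a colimit-preserving symmetric monoidal functor $F:\cS\to\caD$ with $F(X)=Y$, exhibiting $\cS$ as corepresenting the pseudofunctor that sends a (cocomplete) abelian symmetric monoidal $k$-linear category to its groupoid of objects.

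The only point requiring real care is the check that the Day-convolved $F$ intertwines the two symmetries on arbitrary objects of $\cS$, not merely on the generating objects $[m]$, $[n]$. This reduces to verifying that the symmetry $c_{M,N}$ produced on $\cS$ by Day convolution agrees with $\sigma_{F(M),F(N)}$ on coend generators, a standard coherence computation combining naturality of $\sigma$ with the coend formula \eqref{monoi}. I anticipate no other obstacles.
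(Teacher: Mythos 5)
Your proposal is correct and follows essentially the route the paper intends: the paper gives no proof of this proposition beyond the citation to Joyal--Street, and your argument simply fills in that citation (freeness of the skeletal category $\overline{\cS}$ on one generator) together with the standard Day-convolution extension to the abelian envelope already set up in Sec.~\ref{subsec:ms}. The only caveat, which you already flag with your parenthetical ``(cocomplete)'', is that the universal property of the extension $F:\cS\to\caD$ requires $\caD$ to admit the relevant colimits with a colimit-preserving tensor product -- a point the paper itself leaves implicit in the phrase ``free abelian symmetric monoidal category''.
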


\subsection{Fiber functors and general linear Lie algebras}
\label{subsec:ffcla}

As before, we let $\Vect$ denote the category of vector spaces
over $k$. A monoidal $k$-linear functor from a certain
category to $\Vect$ will be called a {\it fiber functor\/}.
Due to the freeness property of $\cS$ as monoidal category, fiber
functors $\cS\to\Vect$ correspond to involutive Yang--Baxter
operators. There are quite a number of them forming algebraic moduli
spaces; see \cite{da:tm}. However, symmetric fiber functors are
labeled by vector spaces (the associated Yang--Baxter operator is the
standard permutation of the tensor factors in the tensor square). Thus,
up to monoidal isomorphisms, symmetric fiber functors correspond to
non-negative integers (the dimensions of the corresponding vector
spaces). Denote by $F_N:\cS\to\Vect$ the (isomorphism class of the)
symmetric fiber functor labeled by $N$.

Now the classical Schur--Weyl duality can be interpreted as follows.

\begin{prop}\label{prop:sw}
The functor $F_N$ factors through the category $\Rep(\gl_N)$
of representations of the general linear Lie algebra $\gl_N$
\beql{compfun}
\xymatrix{ \cS \ar[rr]^{F_N} \ar[rd]_{SW_N} & & \Vect \\ &
\Rep(\gl_N) \ar[ru] }
\eeq
via a full monoidal functor $SW_N:\cS\to \Rep(\gl_N)$
and the forgetful functor
$\Rep(\gl_N)\to\Vect$.
\end{prop}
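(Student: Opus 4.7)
The plan is to exhibit an involutive Yang--Baxter object inside $\Rep(\gl_N)$, apply the universal property of $\cS$ from Theorem~\ref{thm:freesym} to produce $SW_N$, identify its composition with the forgetful functor as $F_N$, and then deduce fullness from the classical surjectivity of the Schur--Weyl map on endomorphisms.

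First I would take the vector representation $V=k^N$ of $\gl_N$ together with the standard flip $\tau\se V\otimes V\to V\otimes V$, $v\otimes w\mapsto w\otimes v$. Since $\gl_N$ acts diagonally on tensor products, $\tau$ is a morphism in $\Rep(\gl_N)$; it clearly satisfies $\tau^2=1$ and the braid relation $(\tau\otimes 1)(1\otimes\tau)(\tau\otimes 1)=(1\otimes\tau)(\tau\otimes 1)(1\otimes\tau)$ on $V^{\otimes 3}$. So $(V,\tau)$ is an involutive Yang--Baxter object in $\Rep(\gl_N)$, and Theorem~\ref{thm:freesym} produces a monoidal functor $SW_N\se\cS\to\Rep(\gl_N)$ with $SW_N(X)=V$ and $SW_N(c)=\tau$, unique up to monoidal isomorphism.

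Composing $SW_N$ with the forgetful functor to $\Vect$ yields a symmetric monoidal functor $\cS\to\Vect$ sending $X$ to $k^N$ and $c$ to the standard flip. By the classification of symmetric fiber functors recalled in Section~\ref{subsec:ffcla}, this composition is canonically isomorphic to $F_N$, which establishes the commutativity of \eqref{compfun}.

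The main obstacle is proving fullness. Under the identification $\cS=\bigoplus_n\Mod\dda k[S_n]$, every object is a quotient of a direct sum of the representable objects $[n]$, and $SW_N$ acts on a right $k[S_n]$-module $M$ by $M\mapsto M\otimes_{k[S_n]}V^{\otimes n}$; in particular $SW_N$ is right exact. A standard diagram chase using this description reduces fullness to two statements about the generators: the vanishing of $\Hom_{\gl_N}(V^{\otimes m},V^{\otimes n})$ for $m\ne n$, and the surjectivity of the endomorphism map
$$
\End_{\cS}([n]) = k[S_n]\longrightarrow\End_{\gl_N}(V^{\otimes n}).
$$
The vanishing is immediate from weight considerations, since all weights occurring in $V^{\otimes n}$ sum to $n$. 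The surjectivity is the classical Schur--Weyl theorem cited in the introduction (\cite{h:pit}, \cite{sh:ud}, \cite{w:cg}), which is the essential input I would quote rather than reprove.
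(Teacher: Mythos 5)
Your proposal is correct and follows essentially the same route as the paper: both obtain $SW_N$ from the universal property of $\cS$ applied to the involutive Yang--Baxter object $(V,\tau)$ in $\Rep(\gl_N)$ (equivalently, from the homomorphisms $k[S_n]\to\End_{\gl_N}(V^{\otimes n})$) and deduce fullness from the classical surjectivity of the Schur--Weyl map, citing the same references. The paper's proof is in fact terser than yours, omitting the explicit vanishing of $\Hom_{\gl_N}(V^{\otimes m},V^{\otimes n})$ for $m\ne n$ and the reduction of fullness to the generating objects $[n]$, both of which you supply.
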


\bpf
Let $V$ be the $N$-dimensional vector representation of $\gl_N$.
By the classical Schur--Weyl duality
we have the homomorphisms
\beql{swfun}
k[S_n]\to \End_{\gl_N}(V^{\ot n})
\eeq
which extend to a monoidal
functor
$
SW_N:\cS\to \Rep(\gl_N)
$
sending the generator $X$ of $\cS$ to $V$, so that
$
SW_N(X^{\ot n})=V^{\ot n}.
$
Hence the functor $SW_N$ fits into the commutative diagram
\eqref{compfun}. The fullness of $SW_N$ follows from
the surjectivity of the homomorphisms \eqref{swfun}.
\epf

We will call $SW_N$ the {\it Schur--Weyl functor\/}.
In what follows we will consider its quantum and affine analogues
and establish similar properties.

\section{Hecke algebras and quantized enveloping algebras}\label{heck}
\setcounter{equation}{0}

\subsection{Braid groups and Hecke algebras}

Recall that the {\it braid group} $B_n$ is the group
generated by elements $t_1,\dots,t_{n-1}$ subject to the relations
$$
t_it_{i+1}t_i=t_{i+1}t_it_{i+1},\qquad t_it_j=t_jt_i\quad
\text{for}\quad |i-j|>1.
$$
The assignments
$$t_i\otimes 1\mapsto t_i,\quad 1\otimes t_j\mapsto t_{j+m},$$
define homomorphisms of group algebras
$$
k[B_m]\otimes k[B_n]\to k[B_{m+n}].
$$
These homomorphisms satisfy the associativity
axiom so that we get a multiplicative sequence
$k[B_*]=\{k[B_n]\ |\ n\geqslant 0\}$ as defined
in Sec.~\ref{subsec:ms}. The following proposition
is immediate from the presentations of the braid groups.

\begin{prop}\label{genb}
The multiplicative sequence $k[B_*]$ is generated
by the element $t\in k[B_2]$
subject to the relation
$$
\mu_{2,1}(t\otimes 1)\mu_{1,2}(1\otimes t)
\mu_{2,1}(t\otimes 1) = \mu_{1,2}(1\otimes t) \mu_{2,1}(t\otimes
1)\mu_{1,2}(1\otimes t)\qquad  \text{in}\quad  k[B_3].
$$
\end{prop}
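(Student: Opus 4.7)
The plan is to mimic exactly the strategy used for the symmetric group case (Proposition~\ref{gen}): reduce the claim to the standard Artin presentation of the braid group $B_n$ and check that the multiplicative-sequence framework of Section~2.2 captures this presentation on the nose. More precisely, unpacking the definitions from Section~2.2, the statement asserts that if $V=\{V_l\}$ is the collection with $V_2=k\cdot \wh{t}$ and $V_l=0$ otherwise, and if $P=\{P_l\}$ is the collection with $P_3$ spanned by $\wh{t}(0)\wh{t}(1)\wh{t}(0)-\wh{t}(1)\wh{t}(0)\wh{t}(1)$ and $P_l=0$ otherwise, then the natural epimorphism of multiplicative sequences $A(V)_*\to k[B_*]$ sending $\wh{t}\mapsto t$ has, in degree $n$, kernel $K_n$ generated as a two-sided ideal by $M(P)_n$.

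First I would identify the image of $\wh{t}(i)\in A(V)_n$ under this epimorphism with the Artin generator $t_{i+1}\in B_n$. Under this identification the shifted generators of $M(P)_n$ are precisely the shifted braid relations
\[
t_{i+1}t_{i+2}t_{i+1}=t_{i+2}t_{i+1}t_{i+2},\qquad 0\leqslant i\leqslant n-3,
\]
appearing in Artin's presentation of $B_n$. Second, and this is the point where one must be careful, I would observe that the construction of $A(V)_n$ already builds in all the remaining defining relations of $B_n$: by definition of $J_n$, the generators $\wh{t}(i)$ and $\wh{t}(j)$ commute in $A(V)_n$ whenever $\{i+1,i+2\}\cap\{j+1,j+2\}=\varnothing$, which translates exactly to the disjointness condition $|i-j|>1$ and hence to the commutativity relations $t_a t_b=t_b t_a$ for $|a-b|>1$ in the Artin presentation.

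Combining these two observations, the quotient $A(V)_n/\langle M(P)_n\rangle$ has a presentation by generators $\wh{t}(i)$, $0\leqslant i\leqslant n-2$, with the braid relations and the $|i-j|>1$ commutativity relations, which is precisely the Artin presentation of $B_n$. The induced homomorphism to $k[B_n]$ is therefore an isomorphism, proving that $K_n$ is generated by $M(P)_n$ for every $n$, and hence that $k[B_*]$ is generated by $t\in k[B_2]$ subject to the single relation displayed in the statement.

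The main (mild) obstacle is the bookkeeping step of matching the shift convention used in the free multiplicative sequence of Section~2.2 with the numbering of Artin generators, and checking that the disjoint-support condition of $J_n$ corresponds to $|i-j|>1$; once these identifications are in place, the argument is formally identical to the proof of Proposition~\ref{gen} and no quadratic (involution) relation appears, reflecting the difference between $S_n$ and $B_n$.
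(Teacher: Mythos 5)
Your unpacking of the free-multiplicative-sequence formalism is accurate and matches the intent of the paper's one-line argument: the shifted copies of the degree-$3$ element of $P$ give exactly the braid relations $t_{i+1}t_{i+2}t_{i+1}=t_{i+2}t_{i+1}t_{i+2}$, and the ideal $J_n$ built into $A(V)_n$ supplies precisely the far commutativity $t_at_b=t_bt_a$ for $|a-b|>1$; the index bookkeeping and the translation of the disjoint-support condition are both correct.

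However, the concluding step --- ``the induced homomorphism to $k[B_n]$ is therefore an isomorphism'' --- fails. The algebra $A(V)_n/\langle M(P)_n\rangle$ is presented by the generators $\wh t(i)$ and the Artin relations \emph{without inverses}; it is the monoid algebra of the positive braid monoid $B_n^+$, not the group algebra $k[B_n]$. Since $B_n^+$ embeds into $B_n$ (Garside), the induced map $k[B_n^+]\to k[B_n]$ is injective, but its image is the span of the positive braids, a proper subspace for $n\geqslant 2$: already for $n=2$ one has $k[B_2]=k[t_1^{\pm 1}]$ while the image is $k[t_1]$, which does not contain $t_1^{-1}$. In particular there is no epimorphism $A(V)_*\to k[B_*]$ sending $\wh t\mapsto t$, so the statement needs $t$ to be taken as an \emph{invertible} generator (adjoin $\wh t^{\,-1}$ with the relations $\wh t\,\wh t^{\,-1}=\wh t^{\,-1}\wh t=1$), just as the paper does explicitly for $u\in AS_1$ in Proposition~\ref{genas} and implicitly via the word ``automorphism'' in Theorem~\ref{thm:braidu}. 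To be fair, the paper's own ``immediate from the presentations'' glosses over the same point; but note that the issue is invisible in the symmetric-group case (Proposition~\ref{gen}) precisely because $t^2=1$ forces invertibility, so the difference between $S_n$ and $B_n$ that you flag at the end of your write-up is exactly the place where your argument needs a patch rather than a remark.
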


Theorem~\ref{gr} and Proposition~\ref{genb}
imply a description of the monoidal category associated
with the multiplicative sequence $k[B_*]$.

\bth\label{thm:braidu}
The abelian monoidal category $\cB=\cC(k[B_*])$ is generated by an
object $X$ and an automorphism $c:X\otimes X\to X\otimes X$ such
that
\begin{equation}\label{yb}
(c\otimes 1_X)(1_X\otimes c)(c\otimes 1_X) = (1_X\otimes c)(c\otimes
1_X)(1_X\otimes c).
\end{equation}
\eth

An automorphism $c$ satisfying
(\ref{yb}) will be called a {\it Yang--Baxter operator}, and the
corresponding object
$X$ -- a {\it Yang--Baxter object}.
Theorem~\ref{thm:braidu}
states that $\cB$ is a free monoidal category generated by a
Yang--Baxter object.

Let $c_{m,n}\in B_{m+n}$ be the braid
whose geometric presentation in terms of strands
is illustrated below; the first $m$ strands pass on
top of the remaining $n$ strands:

$$\xygraph{ !{0;/r2.5pc/:;/u2.5pc/::}
*+{}="t1"
:@{-}[d(2)r(2)]="d1"
:@{}[r(.5)]*{\dots}
:@{}[r(.5)]="d2"
:@{-}[u(2)l(2)]="t2"
(
:@{}[l(.5)]*{\dots}
,
:@{}[r]
:@{-}[d(2)l(2)]  |!{"d2";"t2"}\hole  |!{"t1";"d1"}\hole
:@{}[r(.5)]*{\dots}
:@{}[r(.5)]
:@{-}[u(2)r(2)] |!{"t1";"d1"}\hole  |!{"d2";"t2"}\hole
:@{}[l(.5)]*{\dots}
)
}
$$

\bigskip
\noindent
The collection $\{c_{m,n}\ |\ m,n\geqslant 0\}$ of the
isomorphisms $c_{m,n}:X^{\otimes
m}\otimes X^{\otimes n}\to X^{\otimes n}\otimes X^{\otimes m}$
defines a braiding on the category $\cB$. The
following freeness property of the category $\cB$ is well-known;
see \cite{js:bt}.

\begin{prop}
The category $\cB$ is a free abelian braided monoidal
category generated by one object.
\end{prop}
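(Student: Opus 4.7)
The plan is to combine the universal property of $\cB$ as a monoidal category (Theorem~\ref{thm:braidu}) with the hexagon coherence for braidings, mirroring the cited proof of Proposition~\ref{prop:free-ab-sym}. Concretely, I want to show that for any abelian braided monoidal category $\caD$, evaluation at the generator $X$ induces an equivalence between the category of right exact braided monoidal functors $\cB\to\caD$ and $\caD$ itself.

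Given an object $Y$ of $\caD$, the first step is to observe that $\beta_{Y,Y}:Y\otimes Y\to Y\otimes Y$, the restriction of the braiding of $\caD$, is automatically a Yang--Baxter operator in the sense of Theorem~\ref{thm:braidu}; the braid relation \eqref{yb} is a standard consequence of the two hexagon axioms applied to $Y\otimes Y\otimes Y$. By Theorem~\ref{thm:braidu}, the pair $(Y,\beta_{Y,Y})$ then determines a monoidal functor $F:\cB\to\caD$ with $F(X)=Y$ and $F(c)=\beta_{Y,Y}$, and by the abelian envelope property of $\cB$ noted earlier, this extends essentially uniquely as a right exact functor on all of $\cB$.

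The main step is to verify that $F$ is braided, i.e., that $F(c_{m,n})=\beta_{Y^{\otimes m},Y^{\otimes n}}$ for all $m,n\geqslant 0$. The hexagon axioms for $\beta$ provide an inductive decomposition of $\beta_{Y^{\otimes m},Y^{\otimes n}}$ as a specific composite of copies of $\beta_{Y,Y}$ tensored with identities of tensor powers of $Y$. On the braid-group side, the geometric picture of the shuffle $c_{m,n}$ drawn in the excerpt exhibits the same word in the Artin generators $t_i\in B_{m+n}$. Since $F$ is monoidal with $F(c_{1,1})=\beta_{Y,Y}$, applying $F$ to this word identifies the two composites and shows that $F$ is braided. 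Uniqueness is then automatic: any braided monoidal functor $G:\cB\to\caD$ with $G(X)=Y$ satisfies $G(c)=G(c_{1,1})=\beta_{Y,Y}$, so Theorem~\ref{thm:braidu} forces a canonical monoidal isomorphism $G\simeq F$.

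The main obstacle is the matching carried out in the braided step: checking that the recursive hexagonal decomposition of $\beta_{Y^{\otimes m},Y^{\otimes n}}$ in $\caD$ coincides, term by term, with the positive braid word representing the shuffle $c_{m,n}$ in the Artin generators. This is classical, going back to Joyal--Street \cite{js:bt}; keeping track of reduced expressions explicitly is the only non-formal part of the argument, and for brevity one may simply appeal to \cite{js:bt}, exactly as the paper does in the parallel symmetric statement.
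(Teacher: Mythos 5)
Your proposal is correct and follows the same route as the paper, which gives no argument at all and simply cites \cite{js:bt} for this well-known fact; you have fleshed out the standard Joyal--Street argument (Yang--Baxter operator from the hexagons and naturality, the universal property of Theorem~\ref{thm:braidu}, and the identification of the shuffle braid $c_{m,n}$ with the hexagonal decomposition of $\beta_{Y^{\otimes m},Y^{\otimes n}}$), deferring the one combinatorial step to the same reference. No gaps beyond the minor point that the braid relation for $\beta_{Y,Y}$ uses naturality of the braiding in addition to the hexagon axioms.
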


Suppose that $q$ is a nonzero element of the field $k$.
The {\it Hecke algebra} $\He_n(q)$ is the quotient of the group
algebra $k[B_n]$ by the ideal generated by
the elements $(t_i-q)(t_i+q^{-1})$ with
$i=1,\dots,n-1$. Equivalently, $\He_n(q)$ is
the associative algebra generated by
elements $t_1,\dots,t_{n-1}$ subject to the relations
\ben
(t_i-q)(t_i+q^{-1})=0,\qquad t_it_{i+1}t_i
=t_{i+1}t_it_{i+1},\qquad t_it_j=t_jt_i
\quad\text{for}\quad |i-j|>1.
\een

By taking the quotients of the group algebras $k[B_n]$
and using the multiplicative structure on $k[B_*]$
we get a multiplicative sequence of algebras
$\He_*(q)=\{\He_n(q)\ |\ n\geqslant 0\}$. Using the
presentation of $\He_n(q)$ we come to the following.

\begin{prop}\label{genh}
The multiplicative sequence $\He_*(q)$ is generated by an
element $t\in\He_2(q)$ subject to the relations
\begin{alignat}{2}
(t_i-q)(t_i+q^{-1})&=0\qquad  &&\text{in}\quad \He_2(q),
\non\\
\mu_{2,1}(t\otimes 1)\mu_{1,2}(1\otimes t)
\mu_{2,1}(t\otimes 1) &= \mu_{1,2}(1\otimes t) \mu_{2,1}(t\otimes
1)\mu_{1,2}(1\otimes t)\qquad
&&\text{in}\quad \He_3(q).
\non
\end{alignat}
\end{prop}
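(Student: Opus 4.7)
The plan is to reduce this to Proposition~\ref{genb} together with the standard fact that $\He_n(q)$ is the quotient of $k[B_n]$ by the quadratic relations $(t_i-q)(t_i+q^{-1})=0$. The proposition asserts that at the level of multiplicative sequences only one new ingredient, the single degree-$2$ instance of this quadratic relation, needs to be added on top of the braid-group presentation, because all higher-degree shifts are then produced automatically by the multiplicative structure encoded in $M(P)_n$.

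To make this precise I would unwind the definition of Section~\ref{subsec:ms} on generators and relations. Take the collection $V$ with $V_2=k\,\wh t$ and $V_l=0$ for $l\neq 2$, so the free multiplicative sequence $A(V)_*$ has $M(V)_n$ spanned by the shifted generators $\wh t(0),\dots,\wh t(n-2)$, and the defining ideal $J_n$ of $A(V)_n$ forces $\wh t(i)$ and $\wh t(j)$ to commute whenever $\{i+1,i+2\}\cap\{j+1,j+2\}=\varnothing$, i.e.\ whenever $|i-j|>1$. Under the correspondence $\wh t(i)\leftrightarrow t_{i+1}$ this already reproduces the far-commutativity relations in $\He_n(q)$, and so it yields a well-defined homomorphism of multiplicative sequences $A(V)_*\to\He_*(q)$, which is surjective in each degree because $\He_n(q)$ is generated by $t_1,\dots,t_{n-1}$.

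Next I would take as relations $p_1=(\wh t-q)(\wh t+q^{-1})\in A(V)_2$ and $p_2$ the degree-$3$ braid relation already appearing in Proposition~\ref{genb}. By construction $M(P)_n$ contains every admissible shift of $p_1$ and $p_2$, and under $\wh t(i)\leftrightarrow t_{i+1}$ these become the full families $(t_{i+1}-q)(t_{i+1}+q^{-1})=0$ and $t_{i+1}t_{i+2}t_{i+1}=t_{i+2}t_{i+1}t_{i+2}$. Combined with the built-in commutativity in $A(V)_n$, these exhaust the defining relations of $\He_n(q)$, so the two-sided ideal generated by $M(P)_n$ coincides with the kernel $K_n$ of $A(V)_n\to\He_n(q)$, which is exactly the assertion of the proposition.

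The main obstacle is purely bookkeeping: one must verify that the structure maps $\mu_{m,n}$ agree on both sides under the identification $\wh t(i)\leftrightarrow t_{i+1}$, and that the shift-by-$i$ convention used to define $M(P)_n$ matches the indexing of generators in $\He_n(q)$. Once this matching is in place the argument is essentially identical to those of Propositions~\ref{gen} and~\ref{genb}, of which the present statement is a direct $q$-deformation.
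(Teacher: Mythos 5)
Your proposal is correct and follows the same route as the paper, which disposes of this proposition (like Propositions~\ref{gen} and~\ref{genb}) with the one-line remark that it follows from the standard presentation of $\He_n(q)$; you have simply unwound that remark through the generators-and-relations formalism of Section~2.2, correctly identifying that the far-commutativity relations are built into the free multiplicative sequence $A(V)_*$ and that the shifts in $M(P)_n$ supply all the remaining quadratic and braid relations.
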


We call the category $\caH(q) = \cC(\He_*(q))$
associated with the multiplicative sequence $\He_*(q)$
the {\it Hecke category}. The next theorem
implied by Theorem~\ref{gr} and Proposition~\ref{genh} provides
its description as a monoidal category.

\begin{thm}\label{thm:heckec}
The Hecke category is generated as a monoidal category by an
object $X$ and an automorphism $c:X\otimes X\to X\otimes X$
subject to the relations
\begin{equation}\label{hyb}
(c-q)(c+q^{-1})=0,\qquad (c\otimes 1_X)(1_X\otimes c)(c\otimes 1_X)
= (1_X\otimes c)(c\otimes 1_X)(1_X\otimes c).
\end{equation}
\end{thm}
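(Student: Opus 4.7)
The plan is to derive this as an immediate consequence of the two results cited: Theorem \ref{gr}, which transfers a presentation of a multiplicative sequence of algebras into a presentation of the associated Schur--Weyl category as a monoidal category, and Proposition \ref{genh}, which provides the presentation of $\He_*(q)$ by the generator $t\in\He_2(q)$ and the quadratic plus braid relations. So the skeleton of the argument is essentially a dictionary translation.

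First I would unpack what Theorem \ref{gr} gives in this case. Applied to $A_*=\He_*(q)$, it says that for every monoidal category $\caD$, monoidal functors $\caH(q)\to\caD$ correspond equivalently to pairs $(Y,\{b\})$ where $Y\in\caD$ and $b\in\End_\caD(Y^{\otimes 2})$ satisfies precisely the relations that $t\in\He_2(q)$ satisfies in the multiplicative sequence. Then I would invoke Proposition \ref{genh}: these are exactly the Hecke quadratic relation $(b-q)(b+q^{-1})=0$ and the braid relation expressed via the structure maps $\mu_{2,1}$ and $\mu_{1,2}$. Under the Yoneda embedding $\overline\cC(\He_*(q))\to\caH(q)$, the object $[1]$ maps to the generator $X$, the algebra $\End_{\caH(q)}(X^{\otimes 2})$ is canonically $\He_2(q)$, and the multiplicative structure maps $\mu_{2,1},\mu_{1,2}$ become the tensor products $(-)\otimes 1_X$ and $1_X\otimes(-)$ on endomorphisms. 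Substituting $c$ for $b$ in the data then yields exactly the relations \eqref{hyb}.

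The one small loose end worth checking is the passage from ``endomorphism'' (as in Theorem \ref{gr}) to ``automorphism'' (as in the statement). This is not an obstacle but should be flagged: the quadratic relation $(c-q)(c+q^{-1})=0$ rearranges to $c^2-(q-q^{-1})c-1=0$, so that $c^{-1}=c-(q-q^{-1})$ is automatically an inverse in the endomorphism algebra. Hence an endomorphism satisfying \eqref{hyb} is automatically an automorphism, and the two formulations of the universal property coincide.

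The main obstacle, if there is one, is only the bookkeeping needed to confirm that the braid relation stated in Proposition \ref{genh} via $\mu_{2,1}$ and $\mu_{1,2}$ really does match the intended relation $(c\otimes 1_X)(1_X\otimes c)(c\otimes 1_X)=(1_X\otimes c)(c\otimes 1_X)(1_X\otimes c)$ in the monoidal category. This is immediate from the definition of the tensor product on morphisms in $\overline\cC(\He_*(q))$, which by construction is given by the multiplication maps $\mu_{m,n}$; once this identification is explicitly recorded, the theorem follows with no further work.
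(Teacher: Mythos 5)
Your proposal is correct and follows exactly the route the paper takes: the theorem is stated there as an immediate consequence of Theorem~\ref{gr} applied to the presentation of $\He_*(q)$ given in Proposition~\ref{genh}. Your additional remarks — identifying $\mu_{2,1},\mu_{1,2}$ with $(-)\otimes 1_X$ and $1_X\otimes(-)$, and noting that the quadratic relation forces $c$ to be invertible — are just the bookkeeping the paper leaves implicit.
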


An automorphism $c$ satisfying
(\ref{hyb}) will be called a {\it Hecke Yang--Baxter operator}, and
the corresponding object
$X$ -- a {\it Hecke
Yang--Baxter object}. Theorem~\ref{thm:heckec} states that $\caH(q)$ is
a free abelian monoidal category generated by
a Hecke Yang--Baxter object. The following proposition
is analogous to Proposition~\ref{prop:free-ab-sym}
and whose proof we also omit.

\begin{prop}
The category $\caH(q)$ is a free abelian braided monoidal category
generated by a Hecke Yang--Baxter object.
\end{prop}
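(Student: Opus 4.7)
The strategy is to upgrade the monoidal universality of Theorem~\ref{thm:heckec} to a braided one. The category $\caH(q)$ is already abelian by its definition as $\Funct(\overline\cC(\He_*(q))^{op},\Vect)$, and it is already monoidal. What remains is to equip it with a canonical braiding under which $X$ becomes a Hecke Yang--Baxter object in the braided sense (i.e.\ its self-braiding satisfies the Hecke relation), and then verify the corresponding universal property: for any abelian braided monoidal category $\caD$ and any object $Y\in\caD$ whose self-braiding is Hecke, there is a braided monoidal functor $\caH(q)\to\caD$ sending $X\mapsto Y$, unique up to unique monoidal isomorphism.

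First I would construct the braiding on $\caH(q)$. The defining quotients $k[B_n]\twoheadrightarrow\He_n(q)$ assemble into an epimorphism of multiplicative sequences $k[B_*]\twoheadrightarrow\He_*(q)$, and the functoriality of $\cC(-)$ from Sec.~\ref{subsec:ms} yields a monoidal functor $\cB\to\caH(q)$ that is the identity on the generator $X$. Pushing the shuffle braiding $\{c_{m,n}\}$ of $\cB$ through this functor produces a candidate braiding on $\caH(q)$; naturality and both hexagon axioms descend from $\cB$ because the quotient functor is monoidal and surjects on morphism spaces. In particular $c_{1,1}=t_1$ already in $B_2$, so its image in $\He_2(q)$ is the generator $c$ of Theorem~\ref{thm:heckec}; the Hecke relation then exhibits $X$ as a Hecke Yang--Baxter object whose self-braiding is exactly $c$.

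For the universal property, given $(\caD,Y)$ as above, Theorem~\ref{thm:heckec} produces a unique monoidal functor $F:\caH(q)\to\caD$ with $F(X)=Y$ and $F(c)=c_{Y,Y}$. To show that $F$ is braided I would note that each shuffle $c_{m,n}$ is a prescribed word in the Artin generators of $B_{m+n}$, and hence, via the monoidal structure, a prescribed word in tensor factors of $c$ in $\caH(q)$. Applying $F$ yields the same word in $c_{Y,Y}$ inside $\caD$, which by iterated application of the hexagon axioms coincides with the canonical braiding $c_{Y^{\otimes m},Y^{\otimes n}}$. Uniqueness of $F$ as a braided monoidal functor is inherited from its uniqueness as a monoidal functor.

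The main obstacle is this last identification: that the word obtained by applying $F$ to a shuffle really equals the braiding of $\caD$ between the tensor powers of $Y$. This is the braided coherence statement underlying the freeness of $\cB$ on one object (\cite{js:bt}), and here one must check that factoring through the Hecke quotient preserves the required coherence; this reduces to the observation that every relation among the shuffles $c_{m,n}$ in $\cB$ is already a consequence of the braid relations and therefore persists in $\caH(q)$.
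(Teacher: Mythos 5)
Your proposal is correct and follows the route the paper itself gestures at: the paper omits the proof of this proposition, saying only that it is analogous to Proposition~\ref{prop:free-ab-sym}, i.e.\ to the Joyal--Street freeness of the braided category on one object cited from \cite{js:bt}. Descending the shuffle braiding along the quotient functor $\cB\to\caH(q)$ and then upgrading the monoidal universal property of Theorem~\ref{thm:heckec} to a braided one, by identifying $F(c_{m,n})$ with $c_{Y^{\otimes m},Y^{\otimes n}}$ through the hexagon decomposition into elementary crossings, is exactly the intended argument.
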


\subsection{Fiber functors and quantized enveloping algebras}
\label{subsec:ffhe}

Let $V$ be an
$N$-dimensional vector space with the basis $e_1,\dots,e_N$.
Following \cite{j:qu}, define the linear operator
$R:V^{\otimes 2}\to V^{\otimes 2}$ by
\beql{dj}
R(e_i\otimes e_j) =
\begin{cases}
e_j\otimes e_i &\qquad\text{if}\quad i<j,\\
q\ts e_j\otimes e_i &\qquad\text{if}\quad i=j,\\
e_j\otimes e_i + (q-q^{-1})\ts e_i\otimes
e_j &\qquad\text{if}\quad i>j.
\end{cases}
\eeq
Label the copies of $V$ in the tensor product $V^{\ot n}$
by $1,\dots,n$, respectively.
Let $R_{\tss l,l+1}$ denote the operator in this
tensor product space which acts as $R$ in the tensor
product of the copies of $V$ labeled by $l$ and $l+1$
and acts as the identity operator in the remaining
copies of $V$. By the results of \cite{j:qu},
the mapping $t_l\mapsto R_{\tss l,l+1}$ defines
a representation of the Hecke algebra $\He_n(q)$
in $V^{\ot n}$ and the image of $\He_n(q)$ in the endomorphism
algebra of $V^{\ot n}$ commutes with the image of an action of
the quantized enveloping algebra $\U_q(\gl_N)$
associated with $\gl_N$. Thus we get
a monoidal functor
$
J_N:\caH(q)\to \U_q(\gl_N)\da\Mod
$
sending the generator $X$ of $\caH(q)$ to $V$, so that
$
J_N(X^{\ot n})=V^{\ot n}.
$
We call $J_N$ the {\it Jimbo functor\/}. It
is a quantum analogue of the Schur--Weyl functor $SW_N$; cf.
Sec.~\ref{subsec:ffcla}. The following is a respective analogue
of Proposition~\ref{prop:sw}, where $F_N:\caH(q)\to\Vect$
denotes the fiber functor labeled by $N$.

\begin{prop}
The functor $F_N$ factors through the category
$\U_q(\gl_N)\da\Mod$ of representations of the
quantized enveloping algebra
$$\xymatrix{ \caH(q) \ar[rr]^{F_N}
\ar[rd]_{J_N} & & \Vect \\ & \U_{q}(\gl_N)\da\Mod \ar[ru] }$$
via a monoidal functor
$J_N:\caH(q)\to \U_q(\gl_N)\da\Mod$
and the forgetful functor\newline
$\U_q(\gl_N)\da\Mod\to\Vect$.
\end{prop}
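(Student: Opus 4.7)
The plan is to invoke the universal property of $\caH(q)$ established in Theorem~\ref{thm:heckec}: as a monoidal category, $\caH(q)$ is freely generated by a Hecke Yang--Baxter object. Thus, to construct the Jimbo functor $J_N$ it suffices to produce a Hecke Yang--Baxter object inside $\U_q(\gl_N)\da\Mod$, and to do so I would take the vector representation $V$ equipped with the operator $R$ of \eqref{dj}.

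First I would verify the three inputs that Theorem~\ref{thm:heckec} requires. Namely: (i) the operator $R$ is $\U_q(\gl_N)$-equivariant, i.e.\ a morphism $V\otimes V\to V\otimes V$ in $\U_q(\gl_N)\da\Mod$; (ii) $R$ satisfies the Hecke quadratic relation $(R-q)(R+q^{-1})=0$, which one checks on the invariant two-dimensional subspaces spanned by $e_i\otimes e_i$ and by $\{e_i\otimes e_j,e_j\otimes e_i\}$ for $i\ne j$; and (iii) $R$ satisfies the braid relation $R_{12}R_{23}R_{12}=R_{23}R_{12}R_{23}$ on $V^{\otimes 3}$, which reduces to a finite case check on basis vectors ordered by the relative sizes of the three indices. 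All of these are the original calculations of \cite{j:qu}, so I would cite them rather than reproduce them.

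Once $(V,R)$ is certified as a Hecke Yang--Baxter object in $\U_q(\gl_N)\da\Mod$, Theorem~\ref{thm:heckec} yields a unique monoidal functor $J_N\colon\caH(q)\to\U_q(\gl_N)\da\Mod$ sending the generator $X$ to $V$ and $c$ to $R$. Concretely, on objects $J_N(X^{\otimes n})=V^{\otimes n}$, and on endomorphism algebras the composite $\He_n(q)\to\End_{\caH(q)}(X^{\otimes n})\to\End_{\U_q(\gl_N)}(V^{\otimes n})$ is the homomorphism $t_l\mapsto R_{l,l+1}$ provided by Jimbo's construction.

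For the factorization, composing $J_N$ with the forgetful functor $\U_q(\gl_N)\da\Mod\to\Vect$ produces a monoidal functor $\caH(q)\to\Vect$ that sends $X$ to the underlying vector space of $V$ and $c$ to the linear operator $R$. By the freeness property of $\caH(q)$ (Theorem~\ref{thm:heckec}) applied to the target $\Vect$, this composition is isomorphic to the fiber functor corresponding to the Hecke Yang--Baxter operator $R$ on an $N$-dimensional vector space, which is precisely $F_N$; hence the triangle commutes. The main obstacle in executing this plan is the verification of the braid and Hecke relations and of $\U_q(\gl_N)$-equivariance for $R$, but as these computations are standard and due to Jimbo, the proof effectively reduces to an application of the universal property.
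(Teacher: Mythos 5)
Your proposal is correct and follows essentially the same route as the paper: the paper likewise constructs $J_N$ by citing Jimbo's results that $R$ is $\U_q(\gl_N)$-equivariant and that $t_l\mapsto R_{l,l+1}$ defines a Hecke algebra action, then extends this to a monoidal functor via the universal property of $\caH(q)$ as the free monoidal category on a Hecke Yang--Baxter object, with the factorization of $F_N$ through the forgetful functor being immediate from the identification of $F_N$ with the fiber functor determined by $(k^N,R)$. The only difference is presentational: the paper states the proposition as an analogue of Proposition~\ref{prop:sw} with the construction given in the preceding paragraph, whereas you spell out the appeal to Theorem~\ref{thm:heckec} explicitly.
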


As follows from \cite{j:qu}, the Jimbo functor $J_N$ is full
under some additional conditions
($k=\mathbb{C}$ and $q$ is not a root of unity).
On the other hand, if $k$ and $q$ are arbitrary,
then an analogous property of $J_N$
can be established by replacing the quantized enveloping algebra
with an appropriate integral version;
see \cite{dps:qw}.

\section{Affine symmetric groups and loop algebras}
\label{affsym}
\setcounter{equation}{0}

Let $A$ be a unital associative algebra. Define the sequence of algebras
$SA_n=A^{\otimes n}*S_n$ which are the {\it cross-products\/} of
the symmetric group
algebras with the tensor powers of $A$ with respect to the natural
permutation actions of $S_n$ on $A^{\otimes n}$. As a
vector space, the cross-product $A^{\otimes n}*S_n$
coincides with the tensor product $A^{\otimes n}\otimes k[S_n]$.
However, the algebra structure of $A^{\otimes n}*S_n$ is
different to the tensor product algebra. We will emphasize this fact
by using the notation $(a_1\otimes\dots\otimes a_n)*\sigma$ for the
element of $A^{\otimes n}*S_n$ corresponding to
$(a_1\otimes\dots\otimes a_n)
\otimes\sigma\in A^{\otimes n}\otimes k[S_n]$.
The multiplication in $A^{\otimes n}*S_n$ is given by the
following rule:
\beql{multrule}
\big((a_1\otimes\dots\otimes a_n)*\sigma\big)
\big((b_1\otimes\dots\otimes b_n)*\tau\big)
=\big(a_1b_{\sigma^{-1}(1)}
\otimes\dots\otimes a_nb_{\sigma^{-1}(n)}\big)*\sigma\tau.
\eeq

The multiplicative structure on the sequence of symmetric group
algebras extends to a multiplicative structure on $SA_n$:
$$
\mu_{m,n}:SA_m\otimes SA_n\to SA_{m+n},\qquad m,n\geqslant 0
$$
with
\begin{multline}
\mu_{m,n}\big(((a_1\otimes\dots\otimes a_m)*\sigma)
\otimes((b_1\otimes\dots\otimes b_n)*\tau)\big)\\
{} = (a_1\otimes\dots\otimes a_m\otimes
b_1\otimes\dots\otimes b_n)*\mu_{m,n}(\sigma\otimes\tau).
\non
\end{multline}

\begin{prop}\label{gena}
The multiplicative sequence $SA_*$ is generated by
$t\in S_2\subset SA_2$ and the elements of $SA_1=A$
subject to the relations:
\begin{alignat}{2}
t^2&=1\qquad  &&\text{in}\quad SA_2,
\non\\
t\ts\mu_{1,1}(u\otimes 1)&= \mu_{1,1}(1\otimes u)\ts t\qquad
&&\text{in}\quad SA_2,
\non
\end{alignat}
for any $u\in
SA_1$, and
$$\mu_{2,1}(t\otimes 1)\mu_{1,2}(1\otimes t)
\mu_{2,1}(t\otimes 1) = \mu_{1,2}(1\otimes t) \mu_{2,1}(t\otimes
1)\mu_{1,2}(1\otimes t)\quad  \text{in}\quad  SA_3.
$$
\end{prop}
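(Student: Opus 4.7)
The plan is to verify the two ingredients implicit in the definition of a presentation for a multiplicative sequence: that the specified elements generate $SA_*$ multiplicatively, and that the listed relations, together with the implicit algebra structure on $SA_1=A$, suffice to cut out $SA_*$ from the free multiplicative sequence on these generators.

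For generation, every element of $SA_n$ is a linear combination of terms $(a_1\otimes\cdots\otimes a_n)*\sigma$. By the multiplication rule~\eqref{multrule} such a term factors as $\big((a_1\otimes\cdots\otimes a_n)*1\big)\cdot\big(1^{\otimes n}*\sigma\big)$. The first factor is the image of $a_1\otimes\cdots\otimes a_n$ under the iterated multiplication $\mu_{1,\ldots,1}$, and hence lies in the multiplicative closure of $SA_1=A$. The second factor lies in the image of the canonical inclusion $k[S_n]\hookrightarrow SA_n$, which is compatible with $\mu$, and by Proposition~\ref{gen} the sequence $k[S_*]$ is multiplicatively generated by the single element~$t$.

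For the relations, I would observe that $SA_n$ is presented by three families of defining relations: (i) the algebra relations of $A^{\otimes n}$; (ii) the Coxeter relations for $S_n$; and (iii) the cross-product relations $t_i\ts\iota_j(a)=\iota_{t_i(j)}(a)\ts t_i$, where $\iota_j$ inserts $a\in A$ in the $j$-th tensor slot and $t_i(j)$ denotes the action of the transposition $(i,i+1)$ on $j$. Family~(i) is automatic from the multiplicative sequence structure: the intrinsic relations of $A$ live inside $SA_1=A$, while commutativity of $A$-elements at distinct positions follows from $\mu_{1,1}$ being an algebra homomorphism on $A\otimes A$, and propagates to all $n$ by associativity of $\mu$. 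Family~(ii) follows from Proposition~\ref{gen} combined with the listed $t^2=1$ and braid relations. Family~(iii) reduces, after composition with an appropriate iterated multiplication map of the form $\mu_{i-1,2,n-i-1}$, to the single listed relation $t\ts\mu_{1,1}(u\otimes 1)=\mu_{1,1}(1\otimes u)\ts t$ in $SA_2$ when $j\in\{i,i+1\}$, and to the automatic commutativity of distant multiplicative factors when $j\notin\{i,i+1\}$.

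The main subtlety I anticipate is the completeness of the relation list, i.e.\ ruling out hidden relations. I would handle this by a normal-form argument: modulo the listed relations every element of the corresponding quotient of the free multiplicative sequence can be rewritten as a linear combination of expressions $\mu_{1,\ldots,1}(a_1\otimes\cdots\otimes a_n)\cdot w$, with $w$ a word in the symmetric group generators, by using family~(iii) to push all occurrences of $t$ to the right past all $A$-elements and family~(ii) to reduce $w$ to a Coxeter normal form. A dimension comparison then shows the quotient has dimension at most $(\dim A)^n\cdot n!=\dim SA_n$, so the surjection produced in the first step is in fact an isomorphism. This normal-form reduction is the only nontrivial content; it is the usual proof that a cross-product algebra $A^{\otimes n}*S_n$ has the expected tensor-product basis, transposed into the multiplicative-sequence setting.
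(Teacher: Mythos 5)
Your proposal is correct and follows essentially the same route as the paper's (very terse) proof: both rest on the presentation of the cross-product $SA_n$ with generators $A^{\otimes n}$ and $t_1,\dots,t_{n-1}$, deriving the defining relations from the multiplication rule \eqref{multrule} and the standard presentation of $S_n$. Your additional normal-form and spanning-set argument simply makes explicit the completeness check that the paper leaves implicit.
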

\begin{proof}
This follows from the presentation for $SA_n$
where all elements of $A^{\ot n}$ and the elements
$t_1,\dots,t_{n-1}$ are taken as generators.
The defining relations are obtained by
using the multiplication rule \eqref{multrule} and
the standard presentation of $S_n$.
\end{proof}

We let $\cS(A)$ denote the Schur--Weyl category
$\cC(SA_*)$ associated with the multiplicative
sequence $SA_*$.
The following theorem is immediate
from Theorem \ref{gr} and Proposition \ref{gena}.

\begin{thm}\label{fsca}
The  category $\cS(A)$ is a monoidal category
generated by an object $X$
such that $\End_{\cS(A)}(X) = A$ and an automorphism $c:X\otimes
X\to X\otimes X$ subject to the relations
\begin{equation}
c^2=1,\qquad (c\otimes 1_X)(1_X\otimes c)(c\otimes 1_X) = (1_X\otimes
c)(c\otimes 1_X)(1_X\otimes c)
\end{equation}
and
$$
(a\otimes b)\ts c = c\ts (b\otimes a),\qquad a,b\in A.
$$
In other words, the category $\cS(A)$ is a free symmetric category
generated by an object $X$ such that $\End_{\cS(A)}(X) = A$.
\qed
\end{thm}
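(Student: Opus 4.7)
The plan is to reduce the statement to Theorem~\ref{gr} applied to the presentation of the multiplicative sequence $SA_*$ given by Proposition~\ref{gena}, and then to reinterpret the resulting universal property in symmetric-monoidal language. The first assertion is essentially a formal consequence: Proposition~\ref{gena} exhibits $SA_*$ as generated by the element $t\in SA_2$ together with the elements of $SA_1=A$, modulo the three families of relations listed there. Theorem~\ref{gr} then translates a presentation of a multiplicative sequence into a presentation of the associated Schur--Weyl category as a monoidal category. Reading off the images, the generator $t$ becomes an automorphism $c:X\otimes X\to X\otimes X$, the algebra generators $a\in A=SA_1$ become endomorphisms of $X=[1]$, and the three relations in Proposition~\ref{gena} translate verbatim into $c^2=1$, the braid relation on $c$, and $(a\otimes b)\tss c=c\tss(b\otimes a)$. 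This yields the first half of the theorem.

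For the ``in other words'' reformulation I would argue that being freely generated as a monoidal category by $X$ with $\End(X)=A$ together with $c$ satisfying the three displayed relations is the same datum as being freely generated as a symmetric monoidal category by $X$ with $\End(X)=A$. In one direction, the relations $c^2=1$ and the braid relation allow the familiar construction of shuffle isomorphisms $c_{m,n}:X^{\otimes m}\otimes X^{\otimes n}\to X^{\otimes n}\otimes X^{\otimes m}$ built out of iterates of $c$, exactly as in the discussion leading to Proposition~\ref{prop:free-ab-sym}; the intertwining relation $(a\otimes b)\tss c=c\tss(b\otimes a)$ is precisely the naturality of $c_{1,1}$ with respect to endomorphisms of the generator, which by functoriality of the tensor product propagates to naturality of every $c_{m,n}$ with respect to all morphisms in $\cS(A)$. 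Thus $\cS(A)$ acquires a symmetric monoidal structure. In the reverse direction, in any symmetric monoidal category $\caD$ with an object $Y$ and algebra map $A\to\End_\caD(Y)$, the symmetry $\sigma_{Y,Y}$ tautologically satisfies $\sigma^2=1$, the braid relation, and naturality against the $A$-action, so it provides the required image of $c$. Hence the two universal properties describe the same category.

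I do not anticipate any real obstacle: the argument is a translation between the algebraic presentation of $SA_*$ and its categorical shadow, and all the genuine content has already been supplied by Theorem~\ref{gr} and Proposition~\ref{gena}. The one small point that requires attention is the passage from naturality of the symmetry on the single endomorphism $t$ to naturality on arbitrary morphisms in $\cS(A)$; this is handled by noting that every morphism in $\cS(A)$ is built from elements of $A$ and from $c$ by tensor product and composition, so naturality on the two kinds of generators suffices.
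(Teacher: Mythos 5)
Your proposal is correct and follows the same route as the paper, which states the theorem as an immediate consequence of Theorem~\ref{gr} and Proposition~\ref{gena} and gives no further proof. Your additional elaboration of the ``in other words'' clause --- constructing the shuffle symmetry $c_{m,n}$ from $c$ and checking naturality on the generators $a\in A$ and $c$ --- supplies detail the paper leaves implicit (compare Proposition~\ref{prop:free-ab-sym}) and is sound.
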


\begin{example}
{\it Monoidal autoequivalences of $\cS(A)$}.
Let $f$ be an automorphism of the algebra $A$.
By Theorem~\ref{fsca},
the assignment
$(F(X),F(a),F(c)):= (X,f(a),c)$ for all $a\in A$
defines a monoidal functor
$T_f:\cS(A)\to\cS(A)$. The composition
$T_f\circ T_g$ is canonically
isomorphic (as a monoidal functor) to $T_{fg}$. Clearly, $T_1$ is
canonically isomorphic to the identity functor. Thus, $T_f$
is a monoidal autoequivalence of $\cS(A)$
for any automorphism $f$ of $A$.

Note that in the particular case when $A=k[u]$, the automorphism group
$\Aut(k[u])$ is isomorphic to the semi-direct product $k^*\ltimes k$:
any automorphism of $k[u]$ has the form $\phi_{a,b}$ with
$a\in k^*$ and $b\in k$, and $\phi_{a,b}(u)=au+b$.
Later on we will consider quantum deformations
of the category $\cS(k[u])$; namely, the categories corresponding
to multiplicative sequences of the affine Hecke algebras
and their degenerate versions. In those cases, the automorphism
group $\Aut(k[u])$ will be reduced to one-parameter subgroups
with $b=0$ and $a=1$, respectively; see also
Examples~\ref{ex:mel} and \ref{ex:mehq}.
\qed
\end{example}

\subsection{Affine symmetric groups and
free affine symmetric category}
\label{subsec:asc}

Here we consider two particular cases of the general
construction described above. We will take
$A$ to be the algebra of
Laurent polynomials $k[u^{\pm 1}]=k[u,u^{-1}]$ in a variable $u$ and
the algebra of polynomials $k[u]$.

Let $AS_n$ be the cross-product algebra
$k[u_1^{\pm 1},\dots,u_n^{\pm 1}]*S_n$
with respect to the natural permutation
action of $S_n$ on $k[u_1^{\pm 1},\dots,u_n^{\pm 1}]$. In other words,
$AS_n$ is generated by the elements $t_1,\dots,t_{n-1}$
and invertible elements $u_1,\dots,u_n$
subject to the relations \eqref{cox} together with
\ben
t_i\tss u_i\tss t_i=u_{i+1},\qquad u_i\tss u_j=u_j\tss u_i.
\een
Similarly, we
let $SAS_n$ denote the cross-product algebra $k[u_1,\dots,u_n]*S_n$
with respect to the natural permutation action of $S_n$
on $k[u_1,\dots,u_n]$. The algebra $SAS_n$ can be presented
in the same way
as $AS_n$, with the invertibility condition
on the $u_i$ omitted.

Now define the {\it affine symmetric category\/}
$\AS$ to be the monoidal
category $\cC(AS_*)$ corresponding to the multiplicative sequence
$AS_*=\{AS_n\ |\ n\geqslant 0\}$.
Similarly, define the {\it semi-affine symmetric
category\/}
$\SAS$ to be the monoidal category $\cC(SAS_*)$ corresponding to the
multiplicative sequence $SAS_*=\{SAS_n\ |\ n\geqslant 0\}$.

Using the presentations of the algebras $AS_n$ and $SAS_n$
we obtain the following.

\begin{prop}\label{genas}
The multiplicative sequence $AS_*$ is generated by $t\in AS_2$
and an invertible element $u\in AS_1$ subject to the relations
\begin{alignat}{2}
t^2&=1\qquad  &&\text{in}\quad AS_2,
\non\\
t\tss\mu_{1,1}(u\otimes 1)&= \mu_{1,1}(1\otimes u)\tss t\qquad
&&\text{in}\quad AS_2,
\non\\
\mu_{2,1}(t\otimes 1)\mu_{1,2}(1\otimes t)
\mu_{2,1}(t\otimes 1) &= \mu_{1,2}(1\otimes t) \mu_{2,1}(t\otimes
1)\mu_{1,2}(1\otimes t)\qquad  &&\text{in}\quad  AS_3.
\non
\end{alignat}
Moreover, the multiplicative sequence $SAS_*$ admits
the same presentation with the invertibility condition
on $u\in SAS_1$ omitted.
\end{prop}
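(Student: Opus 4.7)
The plan is to show that the canonical epimorphism of multiplicative sequences $\phi:\wt{AS}_*\to AS_*$ is an isomorphism, where $\wt{AS}_*$ denotes the multiplicative sequence presented by the three listed relations (with $u^{-1}$ adjoined as an auxiliary degree-$1$ generator satisfying $u\tss u^{-1}=u^{-1}\tss u=1$ to encode invertibility). For each $n$ distinguish the elements
\ben
\wt t_i:=\mu_{i-1,\,2,\,n-i-1}(1\otimes t\otimes 1),\qquad \wt u_j:=\mu_{j-1,\,1,\,n-j}(1\otimes u\otimes 1)
\een
of $\wt{AS}_n$, for $1\leqslant i\leqslant n-1$ and $1\leqslant j\leqslant n$; under $\phi$ they map to the standard generators $t_i,u_j$ of $AS_n$.

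First I would check that the $\wt t_i$ and $\wt u_j$ generate $\wt{AS}_n$. Since the sequence $\wt{AS}_*$ is generated by $t$ and $u$, every element of $\wt{AS}_n$ is a linear combination of products of iterated $\mu$-images of $t$ and $u$; by repeated use of the associativity axiom each such $\mu$-image can be rewritten as one with a single non-trivial tensor slot, producing precisely a $\wt t_i$ or $\wt u_j$.

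Next I would verify that these generators satisfy all the defining relations of $AS_n$. The ``close'' relations follow from the three listed ones by applying suitable $\mu$'s: $\wt t_i^{\,2}=1$ is the image of $t^2=1$ under $\mu_{i-1,2,n-i-1}$; the braid relation between $\wt t_i$ and $\wt t_{i+1}$ is the image of the braid relation in $\wt{AS}_3$ under $\mu_{i-1,3,n-i-2}$; and $\wt t_i\wt u_i\wt t_i=\wt u_{i+1}$ comes from the mixed relation in $\wt{AS}_2$ combined with $t^2=1$, after using associativity to identify $\mu_{i-1,2,n-i-1}(1\otimes\mu_{1,1}(u\otimes 1)\otimes 1)$ with $\wt u_i$ and $\mu_{i-1,2,n-i-1}(1\otimes\mu_{1,1}(1\otimes u)\otimes 1)$ with $\wt u_{i+1}$. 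The ``distant'' relations -- namely $\wt u_i\wt u_j=\wt u_j\wt u_i$, $\wt t_i\wt t_j=\wt t_j\wt t_i$ for $|i-j|>1$, and $\wt t_i\wt u_j=\wt u_j\wt t_i$ for $j\notin\{i,i+1\}$ -- are automatic: in each case both elements are $\mu$-images of tensor vectors supported in disjoint slots of a common iterated product $\wt{AS}_{m_1}\otimes\cdots\otimes\wt{AS}_{m_r}$, and hence commute because $\mu$ is an algebra homomorphism out of the tensor product algebra.

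The universal property of the standard presentation of $AS_n$ then yields an algebra map $AS_n\to\wt{AS}_n$ inverse to $\phi_n$ on generators, and compatibility with all $\mu$'s ensures that these inverses assemble into an inverse homomorphism of multiplicative sequences. The argument for $SAS_*$ is identical with the invertibility of $u$ suppressed throughout. The one delicate point -- and the reason the list of relations in the statement is as short as it is -- is the observation that the ``distant'' commutation relations come for free from the multiplicative sequence axioms, since $\mu$ is an algebra homomorphism on the tensor product algebra.
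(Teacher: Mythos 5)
Your proof is correct and follows essentially the same route the paper intends: the paper simply asserts that the result follows from the standard presentation of $AS_n$ (cf.\ the proof of Proposition~\ref{gena}), and your argument is a careful write-out of exactly that comparison, including the two points the paper leaves implicit --- that invertibility is encoded by adjoining $u^{-1}$ as an extra degree-one generator, and that the distant commutation relations hold automatically in the free multiplicative sequence because each $\mu_{m,n}$ is an algebra homomorphism out of a tensor product.
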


The next corollary is immediate from Theorem~\ref{fsca}.

\begin{cor}
The affine symmetric category $\AS$ is a free
symmetric monoidal category generated
by an object $X$ and an automorphism $x:X\to X$. Moreover, the
semi-affine symmetric category $\SAS$ is
a free symmetric monoidal category
generated by an object $X$ and an endomorphism $x:X\to X$.
\qed
\end{cor}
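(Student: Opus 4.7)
The plan is to deduce both statements directly from Theorem~\ref{fsca} by recognizing $\AS$ and $\SAS$ as particular instances of the general construction $\cS(A)$, and then translating the datum ``$\End(X) = A$'' into elementary generators using the universal properties of the algebras $k[u]$ and $k[u^{\pm 1}]$.

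First I would identify the multiplicative sequences: by the very definitions of $AS_n$ and $SAS_n$ as cross-product algebras, one has $AS_n = k[u^{\pm 1}]^{\otimes n}*S_n = SA_n$ for $A=k[u^{\pm 1}]$, and $SAS_n = k[u]^{\otimes n}*S_n = SA_n$ for $A=k[u]$. The associated Schur--Weyl categories are then $\AS = \cS(k[u^{\pm 1}])$ and $\SAS = \cS(k[u])$, so Theorem~\ref{fsca} applies in both cases. It yields that $\AS$ (respectively $\SAS$) is the free symmetric monoidal category generated by an object $X$ equipped with an identification $\End(X) = k[u^{\pm 1}]$ (respectively $k[u]$); equivalently, by the universal property, a symmetric monoidal functor from this category to any symmetric monoidal category $\caD$ is the same as a pair $(Y,\varphi)$, where $Y \in \caD$ and $\varphi$ is a unital algebra homomorphism from $A$ to $\End_\caD(Y)$.

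Next I would rephrase the algebra-homomorphism datum more economically. The algebra $k[u]$ is the free unital associative $k$-algebra on one generator, so a unital homomorphism $k[u]\to R$ into any unital associative algebra $R$ is the same as a choice of an arbitrary element $x\in R$; taking $R = \End_\caD(Y)$ shows that the datum of $\varphi$ is the same as an endomorphism $x:Y\to Y$. Similarly, $k[u^{\pm 1}]$ is the free unital associative $k$-algebra on one invertible generator, so a unital homomorphism $k[u^{\pm 1}]\to R$ is the same as a choice of an invertible element $x\in R^\times$; applied to $R = \End_\caD(Y)$ this becomes an automorphism $x:Y\to Y$. Substituting these reformulations into the universal property obtained from Theorem~\ref{fsca} gives exactly the statements of the corollary.

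There is no substantive obstacle here, since all the hard work sits in Theorem~\ref{fsca}; the only things to verify are the definitional identifications $\AS = \cS(k[u^{\pm 1}])$ and $\SAS = \cS(k[u])$, and the elementary universal properties of $k[u]$ and $k[u^{\pm 1}]$ as associative algebras. Consistency with Proposition~\ref{genas} is a useful sanity check: the Coxeter and braid relations and the commutation $t\ts\mu_{1,1}(u\otimes 1)=\mu_{1,1}(1\otimes u)\ts t$ encode precisely the data of a symmetry together with a (possibly invertible) endomorphism of the generating object, matching the reformulation above.
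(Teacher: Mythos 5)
Your proposal is correct and follows essentially the same route as the paper, which simply declares the corollary ``immediate from Theorem~\ref{fsca}''; you supply the two obvious ingredients, namely the identifications $\AS=\cS(k[u^{\pm 1}])$, $\SAS=\cS(k[u])$ and the universal properties of $k[u]$ and $k[u^{\pm 1}]$ converting an algebra homomorphism into $\End_\caD(Y)$ into an endomorphism, respectively an automorphism, of $Y$. No gaps.
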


\subsection{Fiber functors from $\cS(A)$}

We now return to the Schur--Weyl category $\cS(A)$
associated with an arbitrary unital associative algebra $A$
as defined in the beginning of Sec.~\ref{affsym}.

The natural embeddings $k[S_n]\hra SA_n$ define
a homomorphism of multiplicative sequences $k[S_*]\to
SA_*$. This gives rise to a monoidal functor $\cS\to\cS(A)$
which sends the generator $X\in\cS$ to the generator $X\in\cS(A)$ and
sends the Yang--Baxter operator
$c\in \End_\cS(X^{\otimes 2})$ to $c\in
\End_{\cS(A)}(X^{\otimes 2})$.
The algebras $A^{\otimes n}*S_n$
admit natural multiplicative decompositions
with the components $A^{\otimes n}$ and $k[S_n]$.
Hence, applying Theorem~\ref{aff} we derive that the right adjoint
$F:\cS(A)\to\cS$ is also
monoidal. By Theorem~\ref{fsca}, the functor $F$
is determined by its values on
the generating object $X\in\cS(A)$ together with its values on the
generating morphisms $a\in A= \End_{\cS(A)}(X)$ and $c\in
\End_{\cS(A)}(X^{\otimes 2})$.
The object $F(X)$ is the tensor product $X\otimes A$ of
the generator $X\in\cS$ with the underlying vector space of the
algebra $A$. For any element $a\in A$ the
endomorphism $F(a)$ is the morphism induced by the linear map $A\to
A$, which is the left multiplication by $a$. The automorphism
$F(c)$ is $c\otimes t$, where we identify
$\End_\cS((X\otimes A)^{\otimes 2})$ with the tensor product
$\End_\cS(X^{\otimes 2})\otimes \End(A^{\otimes 2})$ and
$t:A^{\otimes 2}\to A^{\otimes 2}$ is the transposition
$a\otimes b\mapsto b\otimes a$.

Composing the monoidal functor $\cS(A)\to\cS$ with a fiber functor
$\cS\to\Vect$ we get a fiber functor $\cS(A)\to\Vect$. In
particular, taking the symmetric fiber functor $F_N\se\cS\to\Vect$
we get a fiber functor $F_N\se\cS(A)\to\Vect$.
Applying Theorem~\ref{fsca} as above, we find that
$F_N$ is determined by its values $F_N(X)$, $F_N(c)$ and $F_N(a)$
for all $a\in A$. Introducing the vector space
$V=k^N$ we find that
the object $F_N(X)$ is the tensor product $V\otimes A$.
For any $a\in A$ the endomorphism
$F_N(a)$ is the morphism, induced by the linear map $A\to A$ which
is the left multiplication by $a$. The automorphism $F_N(c)$ is the
transposition
$$
t:(V\ot A)\ot(V\ot A)\to (V\ot A)\ot(V\ot A),\quad
(v\otimes a)\otimes(u\otimes b)\mapsto
(u\otimes b)\otimes(v\otimes a).
$$

The following is an analogue of Proposition~\ref{prop:sw}
providing an affine version of the Schur--Weyl functor $SW_N$
and it is verified in the same way.

\begin{prop}
The monoidal functor $F_N:\cS(A)\to\Vect$ factors through the
category $\Rep(\gl_N(A))$ of representations of the general
linear Lie algebra over $A$
$$\xymatrix{ \cS(A) \ar[rr]^{F_N} \ar[rd]_{SW_N} & &
\Vect \\ &  \Rep(\gl_N(A)) \ar[ru] }$$ via a monoidal
functor $SW_N:\cS(A)\to \Rep(\gl_N(A))$
and the forgetful functor\newline
$\Rep(\gl_N(A))\to\Vect$.
\end{prop}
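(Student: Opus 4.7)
The plan is to invoke the universal property of $\cS(A)$ supplied by Theorem~\ref{fsca}: to produce a monoidal functor $\cS(A)\to\caD$ into any monoidal category $\caD$, it is enough to exhibit an object $Y$ of $\caD$, an algebra homomorphism $A\to\End_\caD(Y)$, and an automorphism $\tilde c\in\End_\caD(Y\otimes Y)$ satisfying $\tilde c^2=1$, the Yang--Baxter relation, and the intertwining identity $(a\otimes b)\tilde c=\tilde c(b\otimes a)$ for all $a,b\in A$.

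First I would take $\caD=\Rep(\gl_N(A))$ with $\gl_N(A)=\gl_N\otimes A$. Set $V=k^N$ and let $Y=V\otimes A$, viewed as a $\gl_N(A)$-module by $(E\otimes a)(v\otimes b)=Ev\otimes ab$. The algebra $A$ acts on $Y$ by right multiplication on the second tensor factor, $a\cdot(v\otimes b)=v\otimes ba$; this commutes with the $\gl_N(A)$-action (left multiplication on $A$ commutes with right multiplication), hence gives the required homomorphism $A\to\End_{\gl_N(A)}(Y)$.

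Next I would take $\tilde c$ to be the swap $t\colon Y\otimes Y\to Y\otimes Y$, $(v_1\otimes b_1)\otimes(v_2\otimes b_2)\mapsto(v_2\otimes b_2)\otimes(v_1\otimes b_1)$. Because $\gl_N(A)$ acts on $Y\otimes Y$ through the primitive coproduct of its universal enveloping algebra, $t$ is $\gl_N(A)$-linear. The relations $t^2=1$ and the Yang--Baxter identity are immediate. The compatibility $(a\otimes b)\tss t=t\tss(b\otimes a)$ amounts to the calculation
\ben
(a\otimes b)\tss t\bigl((v_1\otimes b_1)\otimes(v_2\otimes b_2)\bigr)
=(v_2\otimes b_2 a)\otimes(v_1\otimes b_1 b)
=t\tss(b\otimes a)\bigl((v_1\otimes b_1)\otimes(v_2\otimes b_2)\bigr),
\een
which mirrors the multiplication rule \eqref{multrule} in $SA_2$ that was used to derive the defining relations of $\cS(A)$ in Theorem~\ref{fsca}.

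By Theorem~\ref{fsca} these data assemble into a monoidal functor $SW_N\colon\cS(A)\to\Rep(\gl_N(A))$ sending $X$ to $V\otimes A$, each $a\in A=\End_{\cS(A)}(X)$ to right multiplication by $a$, and $c$ to $t$. Composing $SW_N$ with the forgetful functor $\Rep(\gl_N(A))\to\Vect$ yields a monoidal functor whose values on $X$, on $a\in A$, and on $c$ coincide with those of $F_N$ computed in the preceding discussion; by the uniqueness clause of the universal property of $\cS(A)$, this composition is canonically isomorphic to $F_N$, so the diagram commutes. There is no serious obstacle here: the only point requiring care is the bookkeeping of left versus right actions in the verification of $(a\otimes b)t=t(b\otimes a)$, which has to match the sign convention implicit in \eqref{multrule}.
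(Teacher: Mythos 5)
Your argument is correct and follows essentially the same route as the paper: the proposition is proved by checking, via the universal property of Theorem~\ref{fsca}, that the generating data $(V\otimes A,\ A\text{-action},\ t)$ consists of $\gl_N(A)$-equivariant maps, which is exactly the verification the paper says is ``done in the same way'' as Proposition~\ref{prop:sw}. One small point of care: the paper's description of $F_N(a)$ uses \emph{left} multiplication on the $A$-factor, whereas your equivariance check correctly forces \emph{right} multiplication once $\gl_N(A)=\gl_N\otimes A$ acts by left multiplication in the second tensor factor; for the paper's main examples ($A=k[u]$ or $k[u^{\pm 1}]$) $A$ is commutative, so the discrepancy is immaterial.
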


\bre
We believe that the functor $SW_N$ is full. Although we do not
have a proof of this conjecture.
\ere

\section{Yangians and degenerate affine Hecke algebras}
\label{daffhe}
\setcounter{equation}{0}

\subsection{Degenerate affine Hecke algebras}

The {\it degenerate affine Hecke algebra} $\Lambda_n$ is the
unital associative algebra generated by elements
$t_1,\dots,t_{n-1}$ and $y_1,\dots,y_n$
subject to the relations
\begin{alignat}{2}
t_i^2=1,\qquad t_it_{i+1}t_i&=t_{i+1}t_it_{i+1},
\qquad &&t_it_j=t_jt_i\quad\text{for}\quad |i-j|>1,
\non\\
y_it_i-t_iy_{i+1}&=1,\qquad &&y_iy_j=y_jy_i.
\non
\end{alignat}
The assignments
\ben
\bal
t_i&\otimes 1\mapsto t_i,\qquad 1\otimes t_j\mapsto t_{j+m},\\
y_i&\otimes 1\mapsto y_i,\qquad 1\otimes y_j\mapsto y_{j+m}
\eal
\een
define algebra homomorphisms
\ben
\Lambda_m\otimes\Lambda_n\to\Lambda_{m+n}.
\een
It is easy to see that
these homomorphisms satisfy the associativity axiom
thus giving rise to the multiplicative sequence of algebras
$\Lambda_*=\{\Lambda_n\ |\ n\geqslant 0\}$; see Sec.~\ref{subsec:ms}.
Hence we get
a monoidal category $\caL=\cC(\Lambda_*)$
which we call the {\it degenerate
affine Hecke category}.

\begin{prop}\label{gendah}
The multiplicative sequence $\Lambda_*$ is generated by
elements
$y\in\Lambda_1$ and $t\in\Lambda_2$ subject to the relations
\begin{alignat}{2}
t^2&=1\qquad  &&\text{in}\quad \Lambda_2,
\non\\
t\tss\mu_{1,1}(y\otimes 1) - \mu_{1,1}(1\otimes y)\tss t
&= 1\qquad &&\text{in}\quad \Lambda_2,
\non\\
\mu_{2,1}(t\otimes 1)\mu_{1,2}(1\otimes t)
\mu_{2,1}(t\otimes 1) &= \mu_{1,2}(1\otimes t) \mu_{2,1}(t\otimes
1)\mu_{1,2}(1\otimes t) \qquad  &&\text{in}\quad \Lambda_3.
\non
\end{alignat}
\end{prop}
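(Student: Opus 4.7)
The plan is to follow the same template used in the proofs of Propositions~\ref{gen}, \ref{genb}, \ref{genh} and \ref{gena}. I start from the standard presentation of $\Lambda_n$ on the generators $t_1,\dots,t_{n-1}$ and $y_1,\dots,y_n$, identify $y=y_1\in\Lambda_1$ and $t=t_1\in\Lambda_2$ as the proposed generators of the multiplicative sequence, and observe that every $t_i\in\Lambda_n$ is the image of $1\otimes t\otimes 1$ under $\mu_{i-1,2,n-i-1}$, and every $y_j\in\Lambda_n$ is the image of $1\otimes y\otimes 1$ under $\mu_{j-1,1,n-j}$. Hence these two elements generate the multiplicative sequence $\Lambda_*$.

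It then suffices to show that the three listed identities imply all the defining relations of $\Lambda_n$. The relation $t^2=1$ and the braid identity in $\Lambda_3$ are immediate, since under the identification above they read $t_1^2=1$ and $t_1t_2t_1=t_2t_1t_2$, and applying iterated $\mu$ produces $t_i^2=1$ and $t_it_{i+1}t_i=t_{i+1}t_it_{i+1}$ in each $\Lambda_n$. The mixed relation $t\ts\mu_{1,1}(y\otimes 1)-\mu_{1,1}(1\otimes y)\ts t=1$ in $\Lambda_2$ translates to $t_1y_1-y_2t_1=1$, which, after multiplying by $t_1$ on both sides and using $t_1^2=1$, is equivalent to the defining relation $y_1t_1-t_1y_2=1$. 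Applying the appropriate $\mu$ then yields $y_it_i-t_iy_{i+1}=1$ in every $\Lambda_n$.

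The remaining defining relations --- the commutations $y_iy_j=y_jy_i$, $t_it_j=t_jt_i$ for $|i-j|>1$, and $t_iy_j=y_jt_i$ for $j\notin\{i,i+1\}$ --- are automatic consequences of the multiplicative structure: each $\mu_{m,n}$ is an algebra homomorphism from $\Lambda_m\otimes\Lambda_n$ to $\Lambda_{m+n}$, so elements assembled from disjoint tensor factors tautologically commute. I do not anticipate a substantive obstacle; the only point requiring a little care is the equivalence of the two forms $ty_1-y_2t=1$ and $y_1t-ty_2=1$ of the mixed relation, which is a one-line computation using $t^2=1$.
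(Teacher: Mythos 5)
Your proof is correct and takes the same route as the paper, whose entire proof is the one line ``This follows from the presentation for $\Lambda_n$''; you have simply filled in the details, including the correct observation that the disjoint-support commutations are already built into the ideal $J_m$ defining the free multiplicative sequence. Your careful point about reconciling $t_1y_1-y_2t_1=1$ with the defining relation $y_1t_1-t_1y_2=1$ via conjugation by $t_1$ and $t_1^2=1$ is exactly the one nontrivial check, and it is handled correctly.
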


\begin{proof}
This follows from the presentation for $\Lambda_n$.
\end{proof}

\begin{thm}\label{fpdahc}
The degenerate affine Hecke category $\caL$ is a free monoidal
category generated by one object $X$,
an endomorphism $x:X\to X$ and an
involutive Yang--Baxter operator $c:X^{\otimes 2}\to X^{\otimes 2}$
subject to the relations
\ben
c^2=1,\qquad (c\otimes 1_X)(1_X\otimes c)(c\otimes 1_X)
= (1_X\otimes c)(c\otimes 1_X)(1_X\otimes c),
\een
and
\beql{dege}
(x\otimes 1)\tss c - c\tss (1\otimes x) = 1.
\eeq
\end{thm}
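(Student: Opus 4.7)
The proof is essentially a direct application of Theorem~\ref{gr} to the presentation of the multiplicative sequence $\Lambda_*$ provided by Proposition~\ref{gendah}, following exactly the same pattern that was used to deduce Theorem~\ref{thm:heckec} from Proposition~\ref{genh} and Theorem~\ref{fsca} from Proposition~\ref{gena}. So the bulk of the argument is already done once Proposition~\ref{gendah} is in hand.

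The plan is as follows. First I invoke Theorem~\ref{gr} with $A_*=\Lambda_*$ and with the generating set consisting of the two elements $y\in\Lambda_1$ and $t\in\Lambda_2$ listed in Proposition~\ref{gendah}. Under the identification $X=[1]$, the generator $y\in\Lambda_1=\End_{\cC(\Lambda_*)}(X)$ is sent to an endomorphism $x\colon X\to X$, and the generator $t\in\Lambda_2=\End_{\cC(\Lambda_*)}(X^{\otimes 2})$ is sent to an endomorphism $c\colon X^{\otimes 2}\to X^{\otimes 2}$. Theorem~\ref{gr} then tells us that $\caL$ is the free monoidal category generated by $(X,x,c)$ subject precisely to the relations obtained by translating the relations of Proposition~\ref{gendah} through the Schur--Weyl construction.

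Second, I read off this translation. The first relation $t^2=1$ in $\Lambda_2$ becomes $c^2=1_{X^{\otimes 2}}$. The braid-style relation $\mu_{2,1}(t\otimes 1)\mu_{1,2}(1\otimes t)\mu_{2,1}(t\otimes 1)=\mu_{1,2}(1\otimes t)\mu_{2,1}(t\otimes 1)\mu_{1,2}(1\otimes t)$ in $\Lambda_3$ becomes, under the identifications $\mu_{2,1}(t\otimes 1)=c\otimes 1_X$ and $\mu_{1,2}(1\otimes t)=1_X\otimes c$, the Yang--Baxter relation
\ben
(c\otimes 1_X)(1_X\otimes c)(c\otimes 1_X)=(1_X\otimes c)(c\otimes 1_X)(1_X\otimes c),
\een
so that $(X,c)$ becomes an involutive Yang--Baxter object. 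Finally, the cross relation $t\,\mu_{1,1}(y\otimes 1)-\mu_{1,1}(1\otimes y)\,t=1$ in $\Lambda_2$ translates, via $\mu_{1,1}(y\otimes 1)=x\otimes 1_X$ and $\mu_{1,1}(1\otimes y)=1_X\otimes x$, into the relation $c\,(x\otimes 1_X)-(1_X\otimes x)\,c=1_{X\otimes X}$, which is equivalent (after multiplying both sides on the left by $c$ and using $c^2=1$) to \eqref{dege}.

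There is no substantive obstacle here: the only thing one must be careful about is the bookkeeping in the translation of the degenerate relation $yt-ty=1$, since one can write it either as $(x\otimes 1)c-c(1\otimes x)=1$ or as $c(x\otimes 1)-(1\otimes x)c=1$, the two forms being interchangeable by multiplying by $c$ and using $c^2=1$. Once this is matched with the form \eqref{dege} in the statement, the universal property of $\caL$ asserted by the theorem follows verbatim from Theorem~\ref{gr}.
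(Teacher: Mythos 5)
Your proposal is correct and follows exactly the paper's own (one-line) proof: Theorem~\ref{gr} applied to the presentation of $\Lambda_*$ in Proposition~\ref{gendah}, with the braid relation becoming the Yang--Baxter relation and $t^2=1$ becoming $c^2=1$. The only quibble is in your last bookkeeping step: passing from $c\,(x\otimes 1)-(1\otimes x)\,c=1$ to \eqref{dege} requires conjugating by $c$ (multiplying on both the left and the right and using $c^2=1$), not merely multiplying on the left, but this is harmless and does not affect the argument.
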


\begin{proof}
This is immediate from Theorem~\ref{gr} and
Proposition~\ref{gendah}.
\end{proof}

Theorem~\ref{fpdahc} implies that
monoidal functors from $\caL$ to a monoidal
category $\cC$ correspond to triples $(V,x,c)$,
where $V$ is an object in $\cC$, $x\in \End_\cC(V)$ is its
endomorphism, and $c\in \End(V^{\otimes 2})$ is an involutive
Yang--Baxter operator satisfying \eqref{dege}.

\begin{example}\label{ex:mel}
{\it Monoidal autoequivalences of $\caL$}.
Let $u$ be an element of the basic field $k$. The triple
$(X,x+uI,c)$ defines a monoidal functor
$T_u:\caL\to\caL$. The composition
$T_u\circ T_v$ is canonically isomorphic (as a monoidal functor) to
$T_{u+v}$. Hence $T_u$ is a monoidal autoequivalence of $\caL$.
\qed
\end{example}

Due to \cite{l:aha}, the degenerate affine Hecke algebra $\Lambda_n$
admits multiplicative decompositions
\beql{dahamd}
\Lambda_n = k[y_1,\dots,y_n]\ts k[S_n] = k[S_n]\ts k[y_1,\dots,y_n].
\eeq
In the particular case $n=2$ the decompositions \eqref{dahamd}
follow from the relation
\ben
t\tss f(y_1,y_2) = f(y_2,y_1)\tss t
+ \frac{f(y_1,y_2)-f(y_2,y_1)}{y_1-y_2},
\een
where $f(y_1,y_2)$ is a polynomial in $y_1,y_2$.

The natural embeddings $k[S_n]\hra\Lambda_n$ define
a homomorphism of multiplicative sequences $k[S_*]\to\Lambda_*$.
Hence, we get a monoidal functor $\cS\to\caL$
which sends the generator
$X\in\cS$ to the generator $X\in\caL$ and sends the Yang--Baxter
operator
$c\in \End_\cS(X^{\otimes 2})$ to $c\in \End_\caL(X^{\otimes
2})$. By Theorem \ref{aff}, its right adjoint $F:\caL\to\cS$ is also
monoidal. Furthermore, due to Theorem \ref{fpdahc}, the functor $F$
is determined by its values on
the generating object $X\in\caL$ together with its values on the
generating morphisms $x\in \End_\caL(X)$ and
$c\in \End_\caL(X^{\otimes
2})$. We now describe these values. The
object $F(X)$ is the tensor product $X\otimes k[y]$ of the generator
$X\in\cS$ with the vector space
of polynomials $k[y]$.
The endomorphism $F(x)$ is the morphism induced by the
linear map $k[y]\to k[y]$ which is the multiplication by $y$. To
describe the automorphism $F(c)$, identify
$\End_\cS((X\otimes k[y])^{\otimes 2})$ with the tensor product
\ben
\End_\cS(X^{\otimes 2})\ot\End(k[y_1,y_2])
\simeq k[S_2]\ot\End(k[y_1,y_2]).
\een
Here $\End(k[y_1,y_2])$ is
the algebra of $k$-endomorphisms of the vector space
$k[y_1,y_2]\simeq k[y]^{\otimes 2}$. Then we have
$F(c)=\partial+t\tss \tau$,
where $t\in S_2$ is the involution, $\partial\in\End(k[y_1,y_2])$
is the divided difference operator
\beql{divdiff}
\partial\tss f(y_1,y_2)  = \frac{f(y_1,y_2)-f(y_2,y_1)}{y_1-y_2}
\eeq
and $\tau$ is the algebra automorphism
of $k[y_1,y_2]$ defined on the generators by
$\tau(y_1) = y_2$ and $\tau(y_2) = y_1$.

\subsection{Fiber functors and Yangians}

Composing the monoidal functor $\caL\to\cS$ with a fiber functor
$\cS\to\Vect$ we get a fiber functor $\caL\to\Vect$. In particular,
taking the symmetric fiber functor $F_N:\cS\to\Vect$ we get a
fiber functor $F_N:\caL\to\Vect$ (for which we keep the same notation).
By Theorem \ref{fpdahc}, $F_N$ is
determined by its values on the generating object $X\in\caL$ together
with its values on the generating morphisms $x\in \End_\caL(X)
$ and $c\in
\End_\caL(X^{\otimes 2})$. Now we describe the triple
$(F_N(X),F_N(x),F_N(c))$. The object $F_N(X)$ is the
tensor product $V\otimes k[y]$, where $V=k^N$.
The endomorphism $F_N(x)$ is induced by the operator
of multiplication by $y$ in $k[y]$ so that
\beql{x}
F_N(x)(v\ot y^s) = v\ot  y^{s+1},\qquad v\in V.
\eeq
To describe $F_N(c)$, we identify
$F_N(X)\otimes F_N(X)$ with $V\otimes V\otimes
k[y_1,y_2]$ by
\ben
(v\ot y^r)\otimes (w\ot y^s)\mapsto v\otimes w\otimes
y_1^ry_2^s.
\een
Then for any $f\in k[y_1,y_2]$,
\beq\label{cact2}
F_N(c)(v\otimes
w\otimes f(y_1,y_2)) =  w\otimes v\otimes f(y_2,y_1) +v\otimes
w\otimes \partial f(y_1,y_2).
\eeq

In particular, it follows from the definition that the homomorphisms
$F_N(x)$ and $F_N(c)$ satisfy the relations
\ben
\bal
(F_N(x)\otimes 1)F_N(c) &- F_N(c)(1\otimes F_N(x)) = 1,\\
(F_N(c)\otimes 1)(1\otimes F_N(c))(F_N(c)\otimes 1)
&= (1\otimes F_N(c))(F_N(c)\otimes 1)(1\otimes F_N(c)).
\eal
\een

Now we recall some basic facts about the {\it Yangian\/}; see e.g.
\cite[Ch.~12]{cp:gq} and
\cite[Ch.~1]{m:yc} for more details.
The Yangian $\Y(\gl_N)$ is the unital
associative algebra generated by elements
$t^{(r)}_{ij}$ with $1\leqslant i,j\leqslant N$ and
$r = 1,2,\dots$ subject to the defining relations
\begin{equation}\label{yr}
[t^{(r+1)}_{ij},t^{(s)}_{kl}] - [t^{(r)}_{ij},t^{(s+1)}_{kl}] =
t^{(r)}_{kj}t^{(s)}_{il} - t^{(s)}_{kj}t^{(r)}_{il},
\end{equation}
where $r,s\geqslant 0$ and $t^{(0)}_{ij}=\delta_{ij}$.
The Yangian is a Hopf algebra with the coproduct
defined by
\begin{equation}\label{yc}
\Delta(t^{(r)}_{ij}) = \sum_{k=1}^N \sum_{s=0}^r
t^{(s)}_{ik}\otimes t^{(r-s)}_{kj}.
\end{equation}

The Hopf algebra $\Y(\gl_N)$ is a deformation of the
universal enveloping algebra $\U(\gl_N[y])$
in the class of Hopf algebras. As before, we let
$\{e_1,\dots,e_N\}$ denote a basis of an
$N$-dimensional vector space $V$. Then
the vector representation of $\gl_N$ in $V$
extends to a representation of $\gl_N[y]$ on
the vector space $V\otimes k[y]$,
and it gives rise to the representation of $\Y(\gl_N)$
on this space defined by
\beql{yangact}
t^{(r)}_{ij}(e_k\ot y^s) = \delta_{jk}e_i\ot y^{r+s-1}.
\eeq

\begin{prop}\label{prop:yangf}
The fiber functor $F_N:\caL\to\Vect$ factors through the category
of representations of the Yangian $\Y(\gl_N)\da\Mod$,
$$\xymatrix{ \caL \ar[rr]^{F_N} \ar[rd]_{D_N} & &
\Vect \\ & \Y(\gl_N)\da\Mod \ar[ru] }$$ via a monoidal
functor $D_N:\caL\to \Y(\gl_N)\da\Mod$
and the forgetful functor $\Y(\gl_N)\da\Mod\to\Vect$.
\end{prop}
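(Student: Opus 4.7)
By Theorem~\ref{fpdahc}, a monoidal functor $D_N:\caL\to\Y(\gl_N)\da\Mod$ is determined by a triple $(M,x_M,c_M)$ consisting of a $\Y(\gl_N)$-module $M$, an endomorphism $x_M\in\End_{\Y(\gl_N)}(M)$ and an involutive Yang--Baxter operator $c_M\in\End_{\Y(\gl_N)}(M^{\ot 2})$ satisfying $(x_M\ot 1)c_M-c_M(1\ot x_M)=1$. The factorisation of $F_N$ through $D_N$ amounts to requiring that the underlying linear data of $(M,x_M,c_M)$ coincide with $(F_N(X),F_N(x),F_N(c))$. Hence the plan is to (a) equip $V\otimes k[y]$ with the $\Y(\gl_N)$-action~\eqref{yangact}, and (b) verify that the operators $F_N(x)$ and $F_N(c)$ of~\eqref{x} and~\eqref{cact2} are $\Y(\gl_N)$-equivariant. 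Once these two points are settled, the three defining relations of $\caL$ hold automatically in $\Y(\gl_N)\da\Mod$: they hold in $\Vect$ by the functoriality of $F_N$, and the forgetful functor to $\Vect$ is faithful.

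Part (a) is the standard evaluation representation of the Yangian on $V\otimes k[y]$, and~\eqref{yr} is checked, after evaluating both sides on a basis vector $e_m\ot y^p$, by an elementary comparison of $\delta$-symbols and powers of $y$. The equivariance of $F_N(x)$ in (b) is immediate, since both $t^{(r)}_{ij}\circ F_N(x)$ and $F_N(x)\circ t^{(r)}_{ij}$ send $e_k\ot y^s$ to $\delta_{jk}\ts e_i\ot y^{r+s}$.

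The substance of the proof is the equivariance of $F_N(c)$ under the coproduct~\eqref{yc}. The approach is to repackage~\eqref{yangact} into the generating series $t_{ij}(u)=\delta_{ij}+\sum_{r\geqslant 1}t^{(r)}_{ij}u^{-r}$, which on $V\otimes k[y]$ takes the compact form $\delta_{ij}\ts\mathrm{Id}+E_{ij}\ot(u-y)^{-1}$, where $E_{ij}$ is the matrix unit on $V$ and $(u-y)^{-1}=\sum_{r\geqslant 0}u^{-r-1}y^r$ multiplies on $k[y]$. The coproduct~\eqref{yc} then presents $\Delta(t_{ij}(u))$ on $V^{\ot 2}\ot k[y_1,y_2]$ as a rational expression with denominator $(u-y_1)(u-y_2)$. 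Writing $F_N(c)=P_{12}\ts\sigma+\partial_{12}$, with $P_{12}$ the swap of the two $V$-factors, $\sigma$ the exchange $y_1\leftrightarrow y_2$, and $\partial_{12}$ the divided difference~\eqref{divdiff}, one expands $[F_N(c),\Delta(t_{ij}(u))]$ and checks that it vanishes. The decisive identity is $\partial_{12}\bigl((u-y_1)^{-1}\bigr)=\bigl((u-y_1)(u-y_2)\bigr)^{-1}$, equivalent to the partial fraction $\tfrac{1}{(u-y_1)(u-y_2)}=\tfrac{1}{y_1-y_2}\bigl(\tfrac{1}{u-y_1}-\tfrac{1}{u-y_2}\bigr)$; it is exactly what allows the $\partial_{12}$-contribution to absorb the cross-term coming from the non-cocommutativity of $\Delta$. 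This verification is the main obstacle, and the role of the divided-difference term $\partial_{12}$ is essential: it reflects the fact that $\Y(\gl_N)$ is a nontrivial Hopf deformation of $\U(\gl_N[y])$.
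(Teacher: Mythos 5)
Your proposal is correct and follows essentially the same route as the paper: reduce the statement to the $\Y(\gl_N)$-equivariance of $F_N(x)$ and $F_N(c)$, pass to the generating series $t_{ij}(u)=\delta_{ij}+e_{ij}(u-y)^{-1}$, apply the coproduct \eqref{yc}, and compare the two sides using the divided-difference/partial-fraction identities. The paper's computation additionally records the final matrix-unit cancellation $\delta_{ja}e_i\otimes e_b-\delta_{jb}e_a\otimes e_i+\delta_{jb}e_a\otimes e_i-\delta_{ja}e_i\otimes e_b=0$ on the residual term proportional to $f(y_2,y_1)/((u-y_1)(u-y_2))$, which your sketch leaves implicit in ``one expands the commutator and checks that it vanishes,'' but this is a detail within the same argument.
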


\begin{proof}
We need to show that the maps $F_N(x)$ and $F_N(c)$
defined in (\ref{x}) and (\ref{cact2}) are morphisms of
$\Y(\gl_N)$-modules. This is obviously true
for $F_N(x)$. Furthermore,
writing the action \eqref{yangact} in terms of the formal
series
\ben
t_{ij}(u)=\delta_{ij}+\sum_{r=1}^{\infty}
t_{ij}^{(r)}u^{-r}
\een
we get
\ben
t_{ij}(u)(v\ot y^s)
=\Big(\delta_{ij}+\frac{e_{ij}}{u-y}\Big)(v\ot y^s),
\een
where the $e_{ij}\in\End(V)$ denote the standard matrix units.
By the
coproduct formula \eqref{yc}, we have
\ben
t_{ij}(u)\big(v\otimes w\otimes
f(y_1,y_2)\big)=\sum_{k=1}^N
\Big(\delta_{ik}+\frac{e_{ik}}{u-y_1}\Big)v\otimes
\Big(\delta_{kj}+\frac{e_{kj}}{u-y_2}\Big)w\otimes f(y_1,y_2).
\een
Hence, applying \eqref{cact2}, we get
\ben
\bal F_N(c)\tss t_{ij}(u)&\tss\big(v\otimes
w\otimes f(y_1,y_2)\big) =\sum_{k=1}^N
\Big(\delta_{kj}+\frac{e_{kj}}{u-y_1}\Big)w\otimes
\Big(\delta_{ik}+\frac{e_{ik}}{u-y_2}\Big)v\otimes f(y_2,y_1)\\
&+\delta_{ij}\big(v\otimes w\otimes \partial f(y_1,y_2)\big)
+\big(e_{ij}v\otimes w\otimes \partial\ts\frac{f(y_1,y_2)}{u-y_1}\big)\\
{}&+\big(v\otimes e_{ij}w\otimes
\partial\ts\frac{f(y_1,y_2)}{u-y_2}\big)
+\sum_{k=1}^N\Big(e_{ik}v\otimes e_{kj}w\otimes
\partial\ts\frac{f(y_1,y_2)}{(u-y_1)(u-y_2)}\big).
\eal
\een
On the
other hand,
\ben
\bal
t_{ij}(u)\ts F_N(c)\tss\big(v\otimes w\otimes f(y_1,y_2)\big)
&=\sum_{k=1}^N \Big(\delta_{ik}+\frac{e_{ik}}{u-y_1}\Big)w\otimes
\Big(\delta_{kj}+\frac{e_{kj}}{u-y_2}\Big)v\otimes f(y_2,y_1)\\
&+\sum_{k=1}^N\Big(\delta_{ik}+\frac{e_{ik}}{u-y_1}\Big)v\otimes
\Big(\delta_{kj}+\frac{e_{kj}}{u-y_2}\Big)w\otimes \partial
f(y_1,y_2).
\eal
\een
In order to compare these two expressions,
note that
\ben
\bal
\partial\ts\frac{f(y_1,y_2)}{u-y_1}
&=\frac{f(y_2,y_1)}{(u-y_1)(u-y_2)}+\frac{1}{u-y_1}\ts\partial
f(y_1,y_2),\\
\partial\ts\frac{f(y_1,y_2)}{u-y_2}
&=-\frac{f(y_2,y_1)}{(u-y_1)(u-y_2)}+\frac{1}{u-y_2}\ts\partial
f(y_1,y_2),
\eal
\een
and
\ben
\partial\ts\frac{f(y_1,y_2)}{(u-y_1)(u-y_2)}=
\frac{1}{(u-y_1)(u-y_2)}\ts\partial f(y_1,y_2).
\een
Therefore,
\ben
\bal
F_N(c)\tss& t_{ij}(u)\big(v\otimes w\otimes f(y_1,y_2)\big)
-t_{ij}(u)\ts F_N(c)\tss\big(v\otimes w\otimes f(y_1,y_2)\big)\\
{}&=\Big(e_{ij}v\otimes w-v\otimes e_{ij}w+\sum_{k=1}^N
\big(e_{kj}w\otimes e_{ik}v-e_{ik}w\otimes
 e_{kj}v\big)\Big)\otimes \frac{f(y_2,y_1)}{(u-y_1)(u-y_2)}.
\eal
\een
Taking the basis vectors $v=e_a$ and $w=e_b$ we find that
the expression in the brackets equals
\ben
\delta_{ja}e_i\otimes
e_b-\delta_{jb}e_a\otimes e_i+\delta_{jb}e_a\otimes e_i
-\delta_{ja}e_i\otimes e_b=0,
\een
thus completing the proof.
\end{proof}

The functor $D_N$ is called the {\it Drinfeld
functor}. The following property of $D_N$ was
essentially established in \cite{a:df}, \cite{d:da}.

\begin{prop}
The Drinfeld functor $D_N:\caL\to \Y(\gl_N)\da\Mod$ is full.
\end{prop}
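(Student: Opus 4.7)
I would reduce fullness of $D_N$ to two assertions and then invoke classical Schur--Weyl at the associated-graded level. In the sense of Proposition~\ref{prop:sw}, fullness of $D_N$ on the Yoneda image of $\overline\cC(\Lambda_*)$ amounts to: (a) $\Hom_{\Y(\gl_N)}\big((V\ot k[y])^{\ot m},(V\ot k[y])^{\ot n}\big)=0$ for $m\neq n$; and (b) the map $\Lambda_n\to\End_{\Y(\gl_N)}\big((V\ot k[y])^{\ot n}\big)$ is surjective for each $n$. Claim (a) follows from a weight argument: by \eqref{yangact} the element $\sum_i t^{(1)}_{ii}$ acts on $V\ot k[y]$ as the identity, and via the coproduct \eqref{yc} it therefore acts on $(V\ot k[y])^{\ot n}$ as the scalar $n$; any $\Y(\gl_N)$-morphism preserves this eigenvalue, so cross-Homs vanish.

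Claim (b) is the substance of the proposition, following the filtration/deformation strategy of Arakawa--Suzuki \cite{a:df} and Drinfeld \cite{d:da}. Equip $\Y(\gl_N)$ with the ascending filtration $\deg t^{(r)}_{ij}=r-1$; by the PBW theorem for the Yangian, $\gr\Y(\gl_N)\cong \U(\gl_N[y])$ with $t^{(r)}_{ij}\mapsto e_{ij}\tss y^{r-1}$. Equip $\Lambda_n$ with the filtration $\deg y_i=1,\ \deg t_j=0$, whose associated graded is $k[y_1,\dots,y_n]\rtimes S_n$. Grade $(V\ot k[y])^{\ot n}$ by total $y$-degree. From \eqref{yangact} and \eqref{cact2} one checks that both actions respect these filtrations and gradings, and that the associated-graded actions coincide with the classical actions of $\U(\gl_N[y])$ and $k[y_1,\dots,y_n]\rtimes S_n$ on $V^{\ot n}\ot k[y_1,\dots,y_n]$, i.e., with the $A=k[y]$ instance of the construction in Section~\ref{affsym}.

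On the graded level, the required surjectivity $k[y_1,\dots,y_n]\rtimes S_n \twoheadrightarrow \End_{\U(\gl_N[y])}\big(V^{\ot n}\ot k[y_1,\dots,y_n]\big)$ is a polynomial-coefficient extension of classical Schur--Weyl: using the constant elements $e_{ij}\in\gl_N\subset\gl_N[y]$ forces coefficient-wise $\gl_N$-equivariance and hence (by the classical surjectivity $k[S_n]\twoheadrightarrow \End_{\gl_N}(V^{\ot n})$) coefficient-wise membership in the image of $k[S_n]$; using the elements $y^r e_{ij}$ and averaging then forces $k[y_1,\dots,y_n]$-linearity. Granting this, an induction on the leading $y$-degree lifts any $\Y(\gl_N)$-linear endomorphism $\phi$ of $(V\ot k[y])^{\ot n}$: the top-degree component of $\phi$ is $\U(\gl_N[y])$-linear on the graded module, hence lies in the image of $\gr\Lambda_n$; lifting to $\Lambda_n$ and subtracting yields a remainder of strictly smaller leading degree. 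The main obstacle is verifying that the filtration on $\End_{\Y(\gl_N)}\big((V\ot k[y])^{\ot n}\big)$ is exhaustive and has associated graded embedding into $\End_{\U(\gl_N[y])}$ of the graded module -- this boils down to the finite-dimensionality of each homogeneous component in the total $y$-grading on $(V\ot k[y])^{\ot n}$ and to the controlled degree-increase of the Yangian action, both of which are immediate from \eqref{yangact}.
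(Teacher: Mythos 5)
Your reduction to (a) the vanishing of cross-Homs and (b) the surjectivity of $\Lambda_n\to\End_{\Y(\gl_N)}\big((V\ot k[y])^{\ot n}\big)$ is reasonable, and the weight argument for (a) via the primitive element $\sum_i t^{(1)}_{ii}$ is correct. But the proof of (b) has a genuine gap, and it is the decisive one. Your degeneration strategy reduces (b) to the surjectivity of $k[y_1,\dots,y_n]*S_n\to\End_{\U(\gl_N[y])}\big(V^{\ot n}\ot k[y_1,\dots,y_n]\big)$, i.e. to the fullness of the classical current-algebra Schur--Weyl functor $SW_N:\cS(k[y])\to\Rep(\gl_N[y])$. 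That is precisely the $A=k[y]$ instance of the statement the paper records as a conjecture it cannot prove (the Remark at the end of Sec.~\ref{affsym}), so you have reduced the proposition to something that is open within this paper's framework. Moreover your sketch of that step is not repairable as written: the claim that commuting with the elements $y^r e_{ij}$ ``forces $k[y_1,\dots,y_n]$-linearity'' is false, since the commutant contains the coupled operators $f\cdot\sigma_V\tss\sigma_y$ coming from $k[y_1,\dots,y_n]*S_n$ itself, and these are only $k[y_1,\dots,y_n]^{S_n}$-linear (for $n=2$ the transposition sends $y_1v$ to $y_2\tss t(v)$). Two further gaps: a $\Y(\gl_N)$-linear endomorphism of $(V\ot k[y])^{\ot n}$ need not raise the total $y$-degree by a bounded amount a priori, so it need not lie in any term of your filtration of the commutant and the induction on leading degree cannot start --- finite-dimensionality of the homogeneous components does not give exhaustiveness; and fullness on the Yoneda image $\{X^{\ot n}\}$ does not formally imply fullness on all of $\caL=\oplus_n\Mod\dda\Lambda_n$, since $D_N(M)=M\ot_{\Lambda_n}(V[y])^{\ot n}$ is only right exact and a morphism out of $D_N(M)$ for non-projective $M$ need not lift along a presentation.

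The paper's own proof avoids all of this by going in the opposite direction: it uses the multiplicative decomposition $\Lambda_n=k[S_n]\ts k[y]^{\ot n}$ of \eqref{dahamd} to identify $D_N(M)=M\ot_{\Lambda_n}(V[y])^{\ot n}\simeq M\ot_{k[S_n]}V^{\ot n}$, and then cites the fullness of the latter functor on all of $\Mod\dda\Lambda_n$ from Arakawa \cite{a:df} and Drinfeld \cite{d:da}. If you want a self-contained argument you would in effect have to reprove those results; the degeneration route you propose runs into the open classical statement rather than around it.
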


\bpf
By the construction, $D_N$ sends the generator $X$ of $\caL$
to the vector representation $V[y]=V\ot k[y]$ of $\Y(\gl_N)$.
As $D_N$ is a monoidal functor,
we have $D_N(X^{\ot n})=(V[y])^{\ot n}$, while the effect
of $D_N$ on morphisms amounts to the collection
of homomorphisms
\ben
\Lambda_n\to \End_{\Y(\gl_N)}\big(V[y]\ot V[y]\ot\dots\ot V[y]\big).
\een
Note that in this description we actually work
in the category $\overline\cC(\Lambda_*)$ rather than
$\caL=\cC(\Lambda_*)$. Extending now $D_N$ to $\caL$
we come to the formula
\ben
D_N(M)=M\ot_{\Lambda_n}(V[y])^{\ot n},\qquad M\in \Mod\da\Lambda_n.
\een
The multiplicative decomposition $\Lambda_n=k[S_n]\ts k[y]^{\ot n}$
allows us to identify
\ben
M\ot_{\Lambda_n}(V[y])^{\ot n}\simeq M\ot_{k[S_n]}V^{\ot n}.
\een
Due to the results of \cite{a:df}, \cite{d:da}, the functors
\ben
\Mod\da\Lambda_n \longrightarrow \Y(\gl_N)\da\Mod,\qquad
M\mapsto M\ot_{k[S_n]}V^{\ot n}
\een
are full.
\epf

\section{Quantum affine algebras and affine Hecke algebras}
\label{affhe}
\setcounter{equation}{0}

\subsection{Affine braid groups and affine Hecke algebras}

The {\it affine braid group} $\wt B_n$ is the group with
generators $t_1,\dots,t_{n-1}$ and $y_1,\dots,y_n$ subject
to the defining relations
\begin{alignat}{2}
t_it_{i+1}t_i&=t_{i+1}t_it_{i+1},
\qquad &&t_it_j=t_jt_i\quad\text{for}\quad |i-j|>1,
\non\\
t_iy_it_i&=y_{i+1},\qquad &&y_iy_j=y_jy_i.
\non
\end{alignat}

The assignments
\ben
\bal
t_i&\otimes 1\mapsto t_i,\qquad 1\otimes t_j\mapsto t_{j+m},\\
y_i&\otimes 1\mapsto y_i,\qquad 1\otimes y_j\mapsto y_{j+m}
\eal
\een
define algebra homomorphisms
$$
k[\wt B_m]\otimes k[\wt B_n]\to k[\wt B_{m+n}]
$$
These homomorphisms satisfy the associativity axiom and so give rise
to the multiplicative sequence of algebras
$k[\wt B_*]=\{k[\wt B_n]\ |\ n\geqslant 0\}$; see Sec.~\ref{subsec:ms}.
Hence we get
a monoidal category $\wt\cB = \cC(k[\wt B_*])$
which we call the {\it affine braid category}.

\begin{prop}\label{genab}
The multiplicative sequence $k[\wt B_n]$ is generated by
elements $y\in
k[\wt B_1]$ and $t\in k[\wt B_2]$ subject to the relations
\begin{alignat}{2}
t\tss\mu_{1,1}(y\otimes 1)\tss t &= \mu_{1,1}(1\otimes y)
\qquad &&\text{in}\quad k[\wt B_2],
\non\\
\mu_{2,1}(t\otimes 1)\mu_{1,2}(1\otimes t)\mu_{2,1}(t\otimes 1)
&= \mu_{1,2}(1\otimes t)\mu_{2,1}(t\otimes 1)\mu_{1,2}(1\otimes t)
\qquad  &&\text{in}\quad k[\wt B_3].
\non
\end{alignat}
\end{prop}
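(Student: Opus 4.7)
The plan is to match the standard presentation of $\wt B_n$ with the free multiplicative sequence description developed in Section~\ref{subsec:ms}, in the same style as the proofs of Propositions~\ref{genb}, \ref{gena}, \ref{genas}, and \ref{gendah}. I would begin by taking the collection $V$ with $V_1 = k\wh y$ and $V_2 = k\wh t$ and consider the canonical homomorphism of multiplicative sequences $A(V)_* \to k[\wt B_*]$ that sends $\wh y \mapsto y_1$ and $\wh t \mapsto t_1$. By the construction of the multiplicative maps in $A(V)_*$, the shifted copy $\wh y(i-1)$ maps to $y_i$ and $\wh t(i-1)$ maps to $t_i$, so (allowing inverses in the group algebra as in the case of Proposition~\ref{genb}) all standard generators of $\wt B_n$ lie in the image and surjectivity in each degree is automatic.

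Next I would identify the kernel. The defining relations of $\wt B_n$ split into two families. The long-range relations $t_it_j=t_jt_i$ for $|i-j|>1$ and $y_iy_j=y_jy_i$ for $i\ne j$ involve generators whose positional supports are disjoint; these are precisely the commutator relations defining the ideal $J_n$ in the construction of $A(V)_n$, so they hold automatically in $A(V)_n$ and contribute nothing to the kernel. The short-range relations $t_iy_it_i=y_{i+1}$ (overlapping positions $i,i+1$) and the braid relation $t_it_{i+1}t_i=t_{i+1}t_it_{i+1}$ (overlapping positions $i,i+1,i+2$) are the only ones not automatic. Their $i=1$ instances are exactly the two relations stated in the proposition, sitting in $k[\wt B_2]$ and $k[\wt B_3]$ respectively, and the $i>1$ instances are obtained by applying the multiplicative maps $\mu_{i-1,\,*}(1\otimes -)$ to the $i=1$ cases.

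Combining these two observations, the kernel of $A(V)_* \to k[\wt B_*]$ in degree $n$ is generated, in the sense of Section~\ref{subsec:ms}, by $M(P)_n$ where $P$ collects the two stated relations, which establishes the presentation. The only nontrivial step is checking that the disjoint-support commutator ideal $J_n$ from the free construction really captures the full set of long-range commutation relations of $\wt B_n$; this is bookkeeping of position indices rather than substantive work, and is handled uniformly across the analogous propositions cited above.
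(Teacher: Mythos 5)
Your proof is correct and takes essentially the same route as the paper, whose entire argument is the one line ``this follows from the presentation for $\wt B_n$''; you have simply filled in the bookkeeping via the free multiplicative sequence $A(V)_*$ of Section~\ref{subsec:msa}. The one point worth making explicit is that the ideal $J_n$ also imposes the mixed disjoint-support commutations $t_i\tss y_j=y_j\tss t_i$ for $j\notin\{i,i+1\}$, which are not literally among the listed defining relations of $\wt B_n$ but must (and do) hold there, being exactly what is needed for the structure maps $\mu_{m,n}$ of the sequence $k[\wt B_*]$ to be well defined.
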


\begin{proof}
This follows from the presentation for $\wt B_n$.
\end{proof}

\begin{thm}
The affine braid category is a free monoidal
category generated by one object $X$,
an endomorphism $x:X\to X$ and a Yang--Baxter
operator $c:X^{\otimes 2}\to X^{\otimes 2}$
subject to the relations
\ben
(c\otimes 1_X)(1_X\otimes c)(c\otimes 1_X)
= (1_X\otimes c)(c\otimes 1_X)(1_X\otimes c)
\een
and
\ben
c\tss (x\otimes 1)\tss c =1\otimes x.
\een
\end{thm}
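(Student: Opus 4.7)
The plan is to apply Theorem \ref{gr} to the presentation of the multiplicative sequence $k[\wt B_*]$ given by Proposition \ref{genab}, following the same template that was used to derive Theorem \ref{thm:braidu} from Proposition \ref{genb}, Theorem \ref{thm:heckec} from Proposition \ref{genh}, and Theorem \ref{fpdahc} from Proposition \ref{gendah}. Indeed, the entire statement is a direct translation of Proposition \ref{genab} from the language of multiplicative sequences of algebras into the language of monoidal categories.

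More precisely, Proposition \ref{genab} presents $k[\wt B_*]$ as generated by $y\in k[\wt B_1]$ and $t\in k[\wt B_2]$, subject to the two relations displayed there. Invoking Theorem \ref{gr}, the Schur--Weyl category $\wt\cB=\cC(k[\wt B_*])$ is then generated, as a monoidal category, by the object $X=[1]$ together with endomorphisms $x\in\End_{\wt\cB}(X)$ and $c\in\End_{\wt\cB}(X^{\otimes 2})$ corresponding to $y$ and $t$, subject to the same relations reinterpreted in $\wt\cB$. Under the standard dictionary between the multiplicative structure $\mu_{m,n}$ and tensor products of morphisms in $\overline\cC(k[\wt B_*])$, we have
$$
\mu_{2,1}(t\otimes 1)=c\otimes 1_X,\qquad \mu_{1,2}(1\otimes t)=1_X\otimes c,
$$
$$
\mu_{1,1}(y\otimes 1)=x\otimes 1_X,\qquad \mu_{1,1}(1\otimes y)=1_X\otimes x,
$$
so that the braid relation of Proposition \ref{genab} translates into the Yang--Baxter equation, and the affine relation $t\tss\mu_{1,1}(y\otimes 1)\tss t=\mu_{1,1}(1\otimes y)$ translates into $c\tss(x\otimes 1_X)\tss c=1_X\otimes x$.

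The only subtle point is that Yang--Baxter operators are, by the convention adopted earlier in the paper, required to be automorphisms, not merely endomorphisms. This is automatic: the element $t$ is invertible in the group $\wt B_2$ and therefore already invertible in the group algebra $k[\wt B_2]=\End_{\wt\cB}(X^{\otimes 2})$, so its image $c$ is an automorphism. Since there are no further relations in the presentation of $k[\wt B_*]$, no additional obstruction appears, and all steps are routine specialisations of the machinery already developed in Section \ref{sec:msa}. The main (and quite minor) point to be careful about is simply verifying the $\mu$-to-tensor-product dictionary above, which is precisely the content of Theorem \ref{gr}.
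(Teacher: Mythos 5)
Your proposal is correct and follows exactly the paper's own (one-line) proof, which simply cites Theorem~\ref{gr} together with Proposition~\ref{genab}; the explicit $\mu$-to-tensor dictionary and the remark that $c$ is automatically invertible (since $t$ is a group element) are just worked-out details of that same argument.
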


\begin{proof}
This follows from Theorem~\ref{gr} and Proposition~\ref{genab}.
\end{proof}

Now we define certain quotients of the affine braid group algebras.
Fix a nonzero element $q\in k$.
The {\it affine Hecke algebra} $\wt \He_n(q)$ is the associative algebra
generated by elements $t_1,\dots,t_{n-1}$ and
invertible elements $y_1,\dots,y_n$ subject to the relations
\begin{alignat}{2}
(t_i-q)(t_i+q^{-1})=0,\qquad
t_it_{i+1}t_i&=t_{i+1}t_it_{i+1},\qquad &&t_it_j=t_jt_i\quad
\text{for}\quad |i-j|>1,
\non\\
t_i\tss y_i\tss t_i&=y_{i+1},\qquad &&y_i\tss y_j=y_j\tss y_i.
\non
\end{alignat}
The assignments
\ben
\bal
t_i&\otimes 1\mapsto t_i,\qquad 1\otimes t_j\mapsto t_{j+m},\\
y_i&\otimes 1\mapsto y_i,\qquad 1\otimes y_j\mapsto y_{j+m}
\eal
\een
define algebra homomorphisms
$$
\mu_{m,n}:\wt \He_m(q)\otimes\wt \He_n(q)\to\wt \He_{m+n}(q)
$$
making the sequence
$\wt \He_*(q)=\{\wt \He_n(q)\ |\ n\geqslant 0\}$
into a multiplicative sequence of algebras;
see Sec.~\ref{subsec:ms}. This
gives rise to a monoidal category $\wt\caH(q)=\cC(\wt \He_*(q))$,
which we call the {\it affine Hecke category}.

\begin{prop}\label{genah}
The multiplicative sequence $\wt \He_*(q)$ is generated by
elements $y\in\wt \He_1(q)$ and $t\in\wt \He_2(q)$ subject to the
relations
\begin{alignat}{2}
(t-q)(t+q^{-1})&=0\qquad  &&\text{in}\quad \wt\He_2(q),
\non\\
t\tss\mu_{1,1}(y\otimes 1)\tss t &
= \mu_{1,1}(1\otimes y)\qquad &&\text{in}\quad \wt\He_2(q),
\non\\
\mu_{2,1}(t\otimes 1)\mu_{1,2}(1\otimes t)\mu_{2,1}(t\otimes 1)
&= \mu_{1,2}(1\otimes t)\mu_{2,1}(t\otimes 1)\mu_{1,2}(1\otimes t)
\qquad  &&\text{in}\quad \wt\He_3(q).
\non
\end{alignat}
\end{prop}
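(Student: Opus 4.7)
The plan is to follow exactly the template of Propositions \ref{genb}, \ref{genh}, \ref{genas}, \ref{gendah}, and especially \ref{genab}: compare the free multiplicative sequence on $y \in \wt\He_1(q)$ and $t \in \wt\He_2(q)$ modulo the three listed relations with the standard presentation of $\wt\He_n(q)$. The standard generators of $\wt\He_n(q)$ are recovered as $y_i = \mu(1 \otimes y \otimes 1)$ (with $1 \in \wt\He_{i-1}(q)$ on the left and $1 \in \wt\He_{n-i}(q)$ on the right) and $t_i = \mu(1 \otimes t \otimes 1)$ (with $1 \in \wt\He_{i-1}(q)$ on the left and $1 \in \wt\He_{n-i-1}(q)$ on the right); the associativity axiom for multiplicative sequences guarantees these iterated $\mu$-maps are unambiguous.

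Next I would check that the three displayed relations produce every defining relation of $\wt\He_n(q)$. Inserting identities on either side of the quadratic relation in $\wt\He_2(q)$ yields $(t_i - q)(t_i + q^{-1}) = 0$; doing the same with the cross relation $t \tss \mu_{1,1}(y \otimes 1) \tss t = \mu_{1,1}(1 \otimes y)$ yields $t_i \tss y_i \tss t_i = y_{i+1}$; and doing the same with the braid relation in $\wt\He_3(q)$ yields $t_i t_{i+1} t_i = t_{i+1} t_i t_{i+1}$. The remaining commutations $t_i t_j = t_j t_i$ for $|i-j| > 1$ and $y_i y_j = y_j y_i$ need not be postulated separately: because each $\mu_{m,n}$ is an algebra homomorphism out of a tensor product of algebras, any two elements lifted from disjoint tensor factors automatically commute in the image, and this accounts for both families of commutations.

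The main obstacle is the routine but mildly tedious bookkeeping of verifying that the two-sided ideal of the free multiplicative sequence $A(V)_*$ generated, in the sense of Section~\ref{subsec:ms}, by the three listed relations coincides with the kernel of the surjection $A(V)_* \to \wt\He_*(q)$. Once the generator and relation correspondences above are pinned down, this reduces in each degree $n$ to a direct check against the standard presentation of $\wt\He_n(q)$, exactly as in the analogous step of the proof of Proposition \ref{genab}.
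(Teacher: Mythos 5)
Your proposal is correct and takes essentially the same route as the paper, whose proof is the one-line remark that the claim follows from the standard presentation of $\wt\He_n(q)$; you have simply filled in the routine details (recovering $t_i$ and $y_i$ via the $\mu$-maps, checking that the three listed relations yield the quadratic, braid and cross relations, and observing that the disjoint-support commutations are automatic in the free multiplicative sequence).
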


\begin{proof}
This follows from the presentation of $\wt \He_n(q)$.
\end{proof}

\begin{thm}
The affine Hecke category is a free monoidal
category generated by one object
$X$, an endomorphism $x:X\to X$ and a Hecke Yang--Baxter operator
$c:X^{\otimes 2}\to X^{\otimes 2}$
subject to the relations
\ben
(c-q)(c+q^{-1})=0,\qquad (c\otimes 1_X)(1_X\otimes c)(c\otimes 1_X)
= (1_X\otimes c)(c\otimes 1_X)(1_X\otimes c)
\een
and
\ben
c\tss(x\otimes 1)\tss c =1\otimes x.
\een
\end{thm}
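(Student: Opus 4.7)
The plan is to invoke the general universal property machinery already in place. By Theorem~\ref{gr}, if a multiplicative sequence $A_*$ is presented by generators in various $A_{n(i)}$ subject to relations, then the Schur--Weyl category $\cC(A_*)$ is the free monoidal category generated by an object $X=[1]$ together with endomorphisms $a_i \in \End(X^{\otimes n(i)})$ satisfying the same relations, where an element of $A_2$ coming from $\mu_{1,1}(y\otimes 1)$ (resp.\ $\mu_{1,1}(1\otimes y)$) gets translated into the endomorphism $x \otimes 1_X$ (resp.\ $1_X \otimes x$) of $X^{\otimes 2}$. Since Proposition~\ref{genah} provides exactly such a presentation of $\wt\He_*(q)$, all that remains is to transcribe the relations.

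Concretely, the first step is to match the data: $t \in \wt\He_2(q)$ corresponds to $c \in \End(X^{\otimes 2})$ and $y \in \wt\He_1(q)$ corresponds to $x \in \End(X)$. The second step is to translate each relation of Proposition~\ref{genah} under this correspondence. The Hecke relation $(t-q)(t+q^{-1})=0$ in $\wt\He_2(q)$ becomes $(c-q)(c+q^{-1})=0$ in $\End(X^{\otimes 2})$. The braid relation in $\wt\He_3(q)$ becomes the Yang--Baxter equation $(c\otimes 1_X)(1_X\otimes c)(c\otimes 1_X) = (1_X\otimes c)(c\otimes 1_X)(1_X\otimes c)$ via the identifications $\mu_{2,1}(t\otimes 1) \leftrightarrow c \otimes 1_X$ and $\mu_{1,2}(1\otimes t) \leftrightarrow 1_X \otimes c$. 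Finally, the affine relation $t\,\mu_{1,1}(y\otimes 1)\,t = \mu_{1,1}(1\otimes y)$ becomes $c\,(x\otimes 1_X)\,c = 1_X \otimes x$.

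There is essentially no obstacle, but one small bookkeeping point warrants care: the element $y$ is invertible in $\wt\He_1(q)$, whereas the theorem statement merely requires $x$ to be an endomorphism. Under the universal property of Theorem~\ref{gr}, imposing the two-sided relation $c\,(x\otimes 1_X)\,c = 1_X \otimes x$ together with invertibility of $c$ automatically forces $x$ to be invertible, so no separate invertibility hypothesis on $x$ is needed in the universal description, mirroring the treatment in the preceding theorem for $\wt\cB$. With this remark, the proof reduces to the single line: this is immediate from Theorem~\ref{gr} and Proposition~\ref{genah}.
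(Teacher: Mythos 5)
Your main line of argument is exactly the paper's: the theorem is read off by feeding the presentation of $\wt\He_*(q)$ from Proposition~\ref{genah} into the universal property of Theorem~\ref{gr}, with $t\mapsto c$, $y\mapsto x$, and the $\mu$'s translating into tensoring with identities. The paper's proof is literally the one-liner you end with, so on that score you match it.

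However, your ``small bookkeeping point'' is resolved incorrectly. The relation $c\,(x\otimes 1_X)\,c = 1_X\otimes x$ together with invertibility of $c$ does \emph{not} force $x$ to be invertible: take $x=0$ on any Hecke Yang--Baxter object $(X,c)$; then $c\,(0\otimes 1_X)\,c = 0 = 1_X\otimes 0$ and all three relations hold, yet $x$ is not an automorphism. (The relation expresses $1_X\otimes x$ as $c\,(x\otimes 1_X)\,c$, which transports non-invertibility rather than curing it.) Since the $y_i$ are invertible by definition in $\wt\He_n(q)$, a triple $(Y,b,c)$ with $b$ non-invertible cannot arise from a monoidal functor out of $\wt\caH(q)$, so the category of triples satisfying only the listed relations is genuinely larger than the functor category. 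You were right to notice the issue --- the paper itself is silent on it (Proposition~\ref{genah}, unlike Proposition~\ref{genas}, omits the word ``invertible,'' and the theorem says ``endomorphism'') --- but the correct fix is to carry invertibility of $y$, hence of $x$, as part of the generating data, as is done for $\AS$ and the affine braid category, not to claim it is automatic.
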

\begin{proof}
This follows from Theorem~\ref{gr} and Proposition~\ref{genah}.
\end{proof}

\begin{cor}\label{fahc}
Monoidal functors from the affine braid category
{\rm(}resp., from the affine Hecke category{\rm)} to a
monoidal category $\cC$ are determined by triples
$(V,x,c)$, where $V$ is
an object in $\cC$, $x\in \End_\cC(V)$ is its endomorphism, and $c\in
\End(V^{\otimes 2})$ is a {\rm(}Hecke{\rm)} Yang--Baxter
operator such that
\begin{equation}\label{ah}
c\tss(x\otimes 1)\tss c = 1\otimes x.
\end{equation}
\end{cor}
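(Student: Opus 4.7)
The plan is to observe that this corollary is essentially a restatement of the two theorems immediately preceding it (the universal characterizations of the affine braid category $\wt\cB$ and the affine Hecke category $\wt\caH(q)$), translated through the definition of ``monoidal category generated by an object and endomorphisms subject to relations'' given in Section~\ref{sec:msa}.

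Recall from that section that saying a monoidal category $\cC$ is generated by an object $X$ and a family $\{a_i\}_{i\in I}$ of endomorphisms subject to relations $\{p_j\}_{j\in J}$ means, by definition, that for any monoidal category $\caD$ the functor $F\mapsto (F(X),\{F(a_i)\}_{i\in I})$ gives an equivalence between monoidal functors $\cC\to\caD$ and the category of tuples $(Y,\{b_i\}_{i\in I})$ with $b_i\in\End_\caD(Y^{\otimes n(i)})$ satisfying all $p_j(\{b_i\})=0$. Thus once we know that $\wt\cB$ (resp.\ $\wt\caH(q)$) is free on the generators $(X,x,c)$ subject to the explicit list of relations, the assertion about monoidal functors out of it is automatic.

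For the affine braid case, the theorem just proved states that $\wt\cB$ is a free monoidal category generated by an object $X$, an endomorphism $x:X\to X$, and a Yang--Baxter operator $c:X^{\otimes 2}\to X^{\otimes 2}$ subject to the relation $c\ts(x\otimes 1)\ts c=1\otimes x$; here the braid relation defining the term ``Yang--Baxter operator'' is part of the presentation. Unwinding this via the definition above produces exactly the bijection with triples $(V,x,c)$ as in the statement, with $V=F(X)$, $x=F(x)$ and $c=F(c)$; the relation \eqref{ah} is the image under $F$ of the relation already imposed on the generators of $\wt\cB$. The affine Hecke case is identical, using the preceding freeness theorem for $\wt\caH(q)$ which adds the Hecke quadratic relation $(c-q)(c+q^{-1})=0$ to make $c$ a Hecke Yang--Baxter operator.

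There is no real obstacle: the content of the corollary is purely definitional once the freeness theorems are in hand. The only care needed is to check that the conditions built into ``(Hecke) Yang--Baxter operator'' plus the additional relation \eqref{ah} account for the complete set of defining relations of $\wt B_*$ (resp.\ $\wt\He_*(q)$) as multiplicative sequences, which is exactly what Propositions~\ref{genab} and~\ref{genah} have already verified. Thus the corollary follows by simply invoking the universal property.
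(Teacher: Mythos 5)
Your proposal is correct and matches the paper's intent exactly: the paper states this corollary without proof precisely because it is an immediate unwinding of the two preceding freeness theorems through the definition of ``generated by an object and endomorphisms subject to relations'' from Section~\ref{sec:msa}, which is the same route you take. Nothing further is needed.
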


\begin{example}\label{ex:mehq}
{\it Monoidal autoequivalences of $\wt\caH(q)$}.
Let $u$ be an invertible element of the basic field $k$. The triple
$(V,u\tss x,c)$ satisfies the conditions of Corollary \ref{fahc} and so
it defines a monoidal functor $T_u:\wt\caH(q)\to\wt\caH(q)$. The
composition $T_u\circ T_v$ is canonically isomorphic (as a monoidal
functor) to $T_{uv}$. Hence $T_u$ is a monoidal autoequivalence of
$\wt\caH(q)$.
\qed
\end{example}

Due to \cite[Lemma~3.4]{l:aha}, affine Hecke algebras
admit multiplicative decompositions
\begin{equation}\label{ahamd}
\wt \He_n(q) = k[y_1^{\pm 1},\dots,y_n^{\pm 1}]\ts\He_n(q)
= \He_n(q)\ts k[y_1^{\pm 1},\dots,y_n^{\pm 1}].
\end{equation}
For $n=2$ the decomposition (\ref{ahamd}) follows from the
relation
\ben
t\tss f(y_1,y_2) = f(y_2,y_1)\tss t - (q-q^{-1})\ts y_2\ts
\frac{f(y_1,y_2)-f(y_2,y_1)}{y_1-y_2},
\een
where $f(y_1,y_2)$ is an arbitrary Laurent polynomial in $y_1,y_2$.

The monoidal functor $\caH(q)\to\wt\caH(q)$ defined by
the natural homomorphism of multiplicative sequences $\He_*(q)\to\wt
\He_*(q)$,
sends the generator $X\in\caH(q)$ to the generator
$X\in\wt\caH(q)$ and sends $c\in
\End_{\caH(q)}(X^{\otimes 2})$ to $c\in
\End_{\wt\caH(q)}(X^{\otimes 2})$. By Theorem \ref{aff}, its right
adjoint $F:\wt\caH(q)\to\caH(q)$ is also monoidal.
By Corollary~\ref{fahc},
the functor $F$ is determined by its values on the generating object
$X\in\wt\caH(q)$ together with its values on the generating
morphisms $x\in \End_{\wt\caH(q)}(X)$ and $c\in
\End_{\wt\caH(q)}(X^{\otimes 2})$. Now we describe these values.
The object $F(X)$ is the tensor product
$X\ot k[y^{\pm 1}]$ of the generator $X\in\caH(q)$ with the
vector space $k[y^{\pm}]$ of Laurent polynomials. The
automorphism $F(x)$ is the morphism induced by the linear map
$k[y^{\pm 1}]\to k[y^{\pm 1}]$, which is multiplication by $y$. To
describe the automorphism $F(c)$, identify
$\End_{\wt\caH(q)}((X\ot k[y^{\pm 1}])^{\otimes 2})$ with the
tensor product
\ben
\End_{\caH(q)}(X^{\otimes 2})\ot\End(k[y_1^{\pm},y_2^{\pm}])
\simeq \He_2(q)\ot\End(k[y_1^{\pm},y_2^{\pm}]).
\een
Here
$\End(k[y_1^{\pm},y_2^{\pm}])$ is the algebra of $k$-endomorphisms
of the vector space of Laurent polynomials
$k[y_1^{\pm},y_2^{\pm}]\simeq
k[y^{\pm}]^{\otimes 2}$. We have $F(c)=d+t\ts\tau$, where $t=t_1$
is the generator of
$\He_2(q)$, $\tau\in \End(k[y_1^{\pm},y_2^{\pm}])$ is
the algebra automorphism $\tau(f)(y_1,y_2) = f(y_2,y_1)$ and $d\in
\End(k[y_1^{\pm},y_2^{\pm}])$ is defined by $d(f) =
-(q-q^{-1})\tss y_2\tss (y_1-y_2)^{-1}(f-\tau(f))$.

\subsection{Fiber functors and quantum affine algebras}

Let us compose the monoidal functor $F:\wt\caH(q)\to\caH(q)$ with the
monoidal fiber functor $F_N:\caH(q)\to\Vect$
considered in Sec.~\ref{subsec:ffhe}.
We get a fiber functor
$F_N:\wt\caH(q)\to\Vect$ which we denote by the same symbol.
By Corollary~\ref{fahc}, $F_N$ is
determined by its values on the generating object $X\in\wt\caH(q)$
together with its values on the generating morphisms $x\in
\End_{\wt\caH(q)}(X)$ and $c\in \End_{\wt\caH(q)}(X^{\otimes 2})$.
The object
$F_N(X)$ is the tensor product $V\otimes k[y^{\pm 1}]$,
where $V=k^N$. The endomorphism
$F_N(x)$ is the morphism, induced by the
multiplication by $y$ in $k[y^{\pm 1}]$,
\begin{equation}
F_N(x)(v\ot y^s) = v\ot y^{s+1}.
\end{equation}
The map $F_N(c):F_N(X)\otimes F_N(X)\to F_N(X)\otimes F_N(X)$ is
given as follows. Identifying $F_N(X)\otimes F_N(X)$ with $V\otimes
V\otimes k[y_1^{\pm 1},y_2^{\pm 1}]$ by
\ben
(v\ot y^r)\otimes (w\ot y^s)\mapsto
v\otimes w\otimes y_1^ry_2^s,
\een
we can write $F_N(c)$ as
\beq
F_N(c)\big(v\otimes
w\otimes f(y_1,y_2)\big) = R(v\otimes w)\otimes f(y_2,y_1) -
(q-q^{-1})v\otimes w\otimes y_2\ts\partial\tss f(y_1,y_2),
\eeq
where the operator
$R$ is defined in (\ref{dj}), and $\partial$
is the divided difference operator
\eqref{divdiff} extended to Laurent polynomials.

We will now formulate an analogue of Proposition~\ref{prop:yangf},
where the role of the Yangian is played by the
{\it quantum affine algebra\/} $\U_q(\wh\gl_N)$
(with the trivial center charge), also known as
the {\it quantum loop algebra}. The role of the vector
representation is now played by the space $V\otimes
k[y^{\pm 1}]$. Explicit formulas for the action
of $\U_q(\wh\gl_N)$ on this space
are analogous to \eqref{yangact} and they can
be found in \cite{grv:qg}.

\begin{prop}
The fiber functor $F_N:\wt\caH(q)\to\Vect$ factors through the category
of representations $\U_q(\wh\gl_N)\da\Mod$ of the
quantum affine algebra
$$\xymatrix{ \wt\caH(q)
\ar[rr]^{F_N} \ar[rd]_{GRV_N} & & \Vect \\
& \U_q(\wh\gl_N)\da\Mod \ar[ru] }$$ via a monoidal functor
$GRV_N:\wt\caH(q)\longrightarrow \U_q(\wh\gl_N)\da\Mod$
and the forgetful functor\newline
$\U_q(\wh\gl_N)\da\Mod\longrightarrow\Vect$.
\end{prop}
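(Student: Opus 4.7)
The plan is to apply Corollary~\ref{fahc} to the category $\U_q(\wh\gl_N)\da\Mod$. Since $F_N:\wt\caH(q)\to\Vect$ is already monoidal, the triple $(F_N(X),F_N(x),F_N(c))$ automatically satisfies relations \eqref{hyb} and \eqref{ah}, so what remains is purely a $\U_q(\wh\gl_N)$-linearity statement. Concretely, I would equip $V\ot k[y^{\pm 1}]$ with the evaluation module structure from \cite{grv:qg}: in the RTT formalism the action is encoded by $L$-operators $L^\pm(u)$ whose matrix entries act on $e_k\ot y^s$ through rational functions of the spectral parameter of the form $(u-y)^{\pm 1}$, together with the matrix units $e_{ij}$ on $V$. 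The first linearity check is then immediate: multiplication by $y$ on the Laurent polynomial factor commutes with every such rational function of $y$, so $F_N(x)$ is manifestly $\U_q(\wh\gl_N)$-linear.

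The main step is to show that $F_N(c)$ intertwines the coproduct action of $\U_q(\wh\gl_N)$ on $(V\ot k[y^{\pm 1}])^{\ot 2}$. I would follow the template of Proposition~\ref{prop:yangf}: expand the generating series on both sides of $F_N(c)\tss\Delta(l^\pm_{ij}(u))=\Delta(l^\pm_{ij}(u))\tss F_N(c)$, use the explicit form $F_N(c)=R\ot\tau-(q-q^{-1})\tss y_2\ts\partial$, where $R$ is the Jimbo operator \eqref{dj}, $\tau$ swaps $y_1$ and $y_2$, and $\partial$ is the divided difference \eqref{divdiff} extended to Laurent polynomials, and reduce the identity to the trigonometric RTT relation
\ben
R^{\rm trig}(y_1,y_2)\ts L^\pm_1(u)L^\pm_2(u)=L^\pm_2(u)L^\pm_1(u)\ts R^{\rm trig}(y_1,y_2).
\een
The reduction proceeds via partial fraction identities such as
\ben
\partial\tss\frac{y_2\ts f(y_1,y_2)}{u-y_1}=\frac{y_2\ts f(y_2,y_1)}{(u-y_1)(u-y_2)}+\frac{y_2}{u-y_1}\ts\partial f(y_1,y_2),
\een
which are the $q$-analogues of those appearing in the proof of Proposition~\ref{prop:yangf}; evaluating on basis vectors $v\ot w=e_a\ot e_b$ and comparing coefficients of $(u-y_1)^{-1}(u-y_2)^{-1}$ makes the difference of the two sides collapse to zero, exactly as in the Yangian case.

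The hard part will be the bookkeeping. Unlike the Yangian case, $\U_q(\wh\gl_N)$ has two families of generating series $L^+(u)$ and $L^-(u)$, corresponding to expansions at $u=\infty$ and $u=0$, so the intertwining identity must be verified for both halves, and one must also keep careful track of the extra scalar $-(q-q^{-1})\tss y_2$ attached to $\partial$ in $F_N(c)$. Once the partial fraction identities above are in place and the Yang--Baxter equation for the trigonometric $R$-matrix is invoked, the cancellation on basis vectors is essentially parallel to that of Proposition~\ref{prop:yangf}, and no genuinely new ingredient is required.
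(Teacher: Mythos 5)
Your plan coincides with the paper's own treatment: the paper states only that the proof is quite similar to that of Proposition~\ref{prop:yangf} and amounts to checking that $F_N(x)$ and $F_N(c)$ are morphisms of $\U_q(\wh\gl_N)$-modules, deferring the details to \cite{grv:qg}, which is exactly the verification you outline. One small slip worth noting: in your sample identity the factor $y_2$ should sit outside the divided difference, since the relevant term of $F_N(c)$ is $-(q-q^{-1})\tss y_2\tss\partial f$ (apply $\partial$ first, then multiply by $y_2$) and $\partial$ obeys only the twisted Leibniz rule $\partial(gh)=\partial(g)\tau(h)+g\tss\partial(h)$, so $\partial\bigl(y_2 f/(u-y_1)\bigr)$ is not what you wrote --- but the identity actually needed is just the Yangian one multiplied by $y_2$, and the strategy is unaffected.
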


The proof is quite similar to that of
Proposition~\ref{prop:yangf} and amounts to checking that
$F_N(x)$ and $F_N(c)$ are morphisms of
$\U_q(\wh\gl_N)$-modules. We
omit the details; see also \cite{grv:qg}.

We call $GRV_N$ the {\it
Ginzburg--Reshetikhin--Vasserot functor}, as the following
version of the Schur--Weyl duality for
the quantum loop algebras was proved in \cite{grv:qg}; see also
\cite{cp:qa}.

\begin{prop}
The functor $GRV_N:\wt\caH(q)\longrightarrow
\U_q(\wh\gl_N)\da\Mod$ is full.
\end{prop}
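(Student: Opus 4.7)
The plan is to mimic, line for line, the proof of fullness of the Drinfeld functor $D_N$ given above, with the substitutions $\caL \leadsto \wt\caH(q)$, $\Lambda_n \leadsto \wt\He_n(q)$, $\Y(\gl_N) \leadsto \U_q(\wh\gl_N)$, $V[y] \leadsto V[y^{\pm 1}]$, $k[S_n] \leadsto \He_n(q)$, and the decomposition \eqref{dahamd} of $\Lambda_n$ replaced by its quantum analogue \eqref{ahamd}. By construction $GRV_N$ sends the generator $X$ of $\wt\caH(q)$ to $V[y^{\pm 1}] = V \ot k[y^{\pm 1}]$, so that $GRV_N(X^{\ot n}) = (V[y^{\pm 1}])^{\ot n}$, and its effect on morphisms is encoded by the homomorphisms $\wt\He_n(q) \to \End_{\U_q(\wh\gl_N)}\big((V[y^{\pm 1}])^{\ot n}\big)$ naturally living in $\overline\cC(\wt\He_*(q))$. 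Extending $GRV_N$ to the abelian envelope $\wt\caH(q) = \cC(\wt\He_*(q))$ in the manner of Sec.\ts\ref{subsec:ms}, one obtains for $M \in \Mod\da\wt\He_n(q)$ the explicit formula
\ben
GRV_N(M) = M \ot_{\wt\He_n(q)} (V[y^{\pm 1}])^{\ot n}.
\een

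The central step is then to invoke the multiplicative decomposition \eqref{ahamd}, $\wt\He_n(q) = \He_n(q)\ts k[y_1^{\pm 1}, \ldots, y_n^{\pm 1}]$, in order to identify
\ben
M \ot_{\wt\He_n(q)} (V[y^{\pm 1}])^{\ot n} \simeq M \ot_{\He_n(q)} V^{\ot n}.
\een
This is the direct analogue of the identification $M \ot_{\Lambda_n} (V[y])^{\ot n} \simeq M \ot_{k[S_n]} V^{\ot n}$ used for $D_N$, and should follow from the same Lemma~\ref{tmd}-style base-change: the space $(V[y^{\pm 1}])^{\ot n}$ is free over the polynomial subalgebra $k[y_1^{\pm 1}, \ldots, y_n^{\pm 1}]$ on the subspace $V^{\ot n}$, with the $\He_n(q)$-action on $V^{\ot n}$ implemented by the Jimbo $R$-matrix of Sec.\ts\ref{subsec:ffhe} and the $y_i$ acting by multiplication on the Laurent polynomial factor.

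Once this identification is in place, fullness of $GRV_N$ reduces to the fullness of the functors
\ben
\Mod\da\wt\He_n(q) \longrightarrow \U_q(\wh\gl_N)\da\Mod, \qquad M \mapsto M \ot_{\He_n(q)} V^{\ot n},
\een
which is precisely the quantum affine Schur--Weyl statement proved in \cite{grv:qg} and refined in \cite{cp:qa}, so one simply quotes those results, just as the degenerate case appealed to \cite{a:df}, \cite{d:da}. The only step that is not purely formal, and therefore the likely obstacle, is the compatibility of the $\wt\He_n(q)$-module structure on $(V[y^{\pm 1}])^{\ot n}$ with the multiplicative decomposition \eqref{ahamd}; explicitly, one must check that the Lusztig-type commutation relation $t\tss f(y_1,y_2) = f(y_2,y_1)\tss t - (q-q^{-1})\tss y_2\tss (y_1-y_2)^{-1}(f(y_1,y_2)-f(y_2,y_1))$ is honoured by the action on $V^{\ot 2}\ot k[y_1^{\pm 1},y_2^{\pm 1}]$, so that the base-change isomorphism above really does hold. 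Once this compatibility is unwound, the remaining argument is immediate.
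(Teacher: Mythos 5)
Your argument is correct and matches what the paper intends: the paper writes out no proof for this proposition, simply attributing the result to \cite{grv:qg} (see also \cite{cp:qa}), and your reduction via the multiplicative decomposition \eqref{ahamd} is the exact transcription to the quantum affine setting of the paper's own proof of fullness of the Drinfeld functor $D_N$, ending in the same citation. The only point worth noting is that the compatibility you flag as "the likely obstacle" is automatic, since the $\wt\He_n(q)$-module structure on $(V\ot k[y^{\pm 1}])^{\ot n}$ already exists by the construction of $F_N$; what \eqref{ahamd} buys is that this module is induced from the $\He_n(q)$-module $V^{\ot n}$, which is exactly your base-change isomorphism.
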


\section{Localizations and categorical actions}\label{app}
\setcounter{equation}{0}

\subsection{Localizations with respect to discriminants}

Here we discuss some applications of the universal properties of
the affine Hecke category $\wt\caH(q)$ and its degenerate version
$\caL$. We will regard
$\wt\caH(q)$ and $\caL$ as respective
quantum deformations of the affine
symmetric category $\AS$ and the semi-affine symmetric
category $\SAS$ and we will show that these deformations are
trivial away from some
discriminant-type loci.

To formulate the precise statement, let $\Delta\subset \Mor\AS$ be the
monoidally and multiplicatively closed set of morphisms generated by
$x\otimes 1-1\otimes x$; see Sec.~\ref{subsec:asc}.
In other words, for each $n$ we consider
the multiplicatively closed set of
morphisms generated by
\ben
\Delta_n\in k[y_1^{\pm
1},\dots,y_n^{\pm 1}]^{S_n}\subset k[y_1^{\pm 1},\dots,y_n^{\pm 1}]*S_n
= \End_\AS(X^{\otimes n}),
\een
where $\Delta_n =
\prod_{i\not=j}(y_i-y_j)$ is the {\it discriminant polynomial\/}.
Note that the algebra of symmetric polynomials
$k[y_1^{\pm 1},\dots,y_n^{\pm 1}]^{S_n}$ coincides with the center
of the endomorphism algebra
$\End_\AS(X^{\otimes n})$ so that $\Delta_n$ commutes with all
morphisms in $\AS$. Denote by $\AS[\Delta^{-1}]$ the category of
fractions with respect to $\Delta$; see e.g. \cite{gz:cf}
for the definition.
This category has the form
$\cC(A_*)$, where $A_*=\{A_n\ |\ n\geqslant 0\}$
is the multiplicative sequence of the localized
algebras $A_n =
\big(k[y_1^{\pm 1},\dots,y_n^{\pm1}]*S_n\big)[\Delta^{-1}_n]$.
Therefore, the category
$\AS[\Delta^{-1}]$ is monoidal. A similar argument shows that
the category $\SAS[\Delta^{-1}]$ is also monoidal.
Moreover, the localization functors
$\AS\to\AS[\Delta^{-1}]$ and $\SAS\to\SAS[\Delta^{-1}]$ are monoidal.

It is well known from \cite{bz:ir}
that the center of the affine Hecke algebra
$\wt \He_n(q)$ for generic $q$
coincides with the algebra of
symmetric Laurent polynomials
$k[y_1^{\pm 1},\dots,y_n^{\pm 1}]^{S_n}$, while
the center of the degenerate affine
Hecke algebra $\Lambda_n$ coincides with the algebra of
symmetric polynomials
$k[y_1,\dots,y_n]^{S_n}$. Therefore, it is unambiguous to
define the respective categories of fractions as
\ben
\wt\caH(q)[\Delta^{-1}] = \cC(\wt \He_*(q)[\Delta^{-1}_*])
\qquad\text{and}\qquad
\caL[\Delta^{-1}] = \cC(\Lambda_*[\Delta^{-1}_*]).
\een

\begin{prop}\label{disc}
The assignment $(X,x,c)\mapsto (X,x,\wt c\tss)$, where
\begin{equation}\label{ahc}
\wt c = (x\otimes 1-1\otimes x)^{-1}\big((q^{-1}-q)(1\otimes x) +
(q(x\otimes 1)-q^{-1}(1\otimes x))c\big)
\end{equation}
defines a monoidal functor $\wt\caH(q)[\Delta^{-1}]\to\AS[\Delta^{-1}]$.

Moreover, the assignment $(X,x,c)\mapsto (X,x,\wt c\tss)$,
where
\begin{equation}\label{dahc}
\wt c = (x\otimes 1-1\otimes x)^{-1}\big(1\otimes 1 + (x\otimes
1-1\otimes x - 1\otimes 1)c\big)
\end{equation}
defines a monoidal functor $\caL[\Delta^{-1}]\to\SAS[\Delta^{-1}]$.
\end{prop}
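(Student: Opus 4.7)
The plan is to apply the universal properties in Corollary~\ref{fahc} and Theorem~\ref{fpdahc}. By Corollary~\ref{fahc}, giving a monoidal functor $\wt\caH(q)\to\cC$ amounts to giving a triple $(Y,x,c)$ in $\cC$ with $c$ a Hecke Yang--Baxter operator satisfying $c\tss(x\ot 1)\tss c=1\ot x$; Theorem~\ref{fpdahc} provides the analogous statement for $\caL$. I would therefore take $Y=X$ and $x$ to be the generating data of $\AS[\Delta^{-1}]$ (respectively $\SAS[\Delta^{-1}]$) and define $\wt c$ by \eqref{ahc} (respectively \eqref{dahc}). Writing $y_1=x\ot 1$ and $y_2=1\ot x$, the element $y_1-y_2$ is invertible in $\End_{\AS[\Delta^{-1}]}(X^{\ot 2})$ because $\Delta_2=-(y_1-y_2)^2$ belongs to the localizing set $\Delta$, so $\wt c$ is well defined; the degenerate case is identical.

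With $(X,x,\wt c)$ in place, three relations must be verified: the quadratic relation on $\wt c$ (Hecke in the quantum case, involutive in the degenerate case), the affine-type commutation relation \eqref{ah} (respectively \eqref{dege}), and the braid relation for $\wt c$. The first two are pairwise identities in $\End(X^{\ot 2})$ and reduce to direct manipulations using the ambient symmetric relations $c^2=1$ together with $cy_1=y_2c$ and $cy_2=y_1c$. For instance, in the degenerate case I would rewrite \eqref{dahc} as $\wt c=d^{-1}+(1-d^{-1})c$ with $d=y_1-y_2$; the commutation $c(1-d^{-1})=(1+d^{-1})c$ then yields $\wt c^2=d^{-2}+(1-d^{-2})=1$ at once, and $y_1\wt c-\wt c y_2=d^{-1}(y_1-y_2)=1$ using that $y_1$ commutes with the polynomial $d$. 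The quantum verification with \eqref{ahc} is parallel but carries extra Hecke coefficients.

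The main obstacle is the braid relation $(\wt c\ot 1)(1\ot\wt c)(\wt c\ot 1)=(1\ot\wt c)(\wt c\ot 1)(1\ot\wt c)$ in $\End(X^{\ot 3})$. This is a spectral-parameter Yang--Baxter identity: expanding $\wt c\ot 1$ and $1\ot\wt c$ in terms of $y_1,y_2,y_3$ and the symmetric operators $c_1,c_2$ acting on adjacent tensor factors, and clearing the denominators coming from $y_i-y_j$, reduces the claim to a finite collection of identities in the cross-product algebra $k[y_1^{\pm 1},y_2^{\pm 1},y_3^{\pm 1}]*S_3$ (respectively $k[y_1,y_2,y_3]*S_3$), which I would check using $c_1c_2c_1=c_2c_1c_2$ and the cross commutations $c_iy_j=y_{\sigma_i(j)}c_i$. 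Equivalently, the formulas \eqref{ahc} and \eqref{dahc} are the standard trigonometric and rational Baxterizations of a (Hecke or involutive) Yang--Baxter operator, for which the spectral Yang--Baxter equation is a universal identity. Once this is granted, the universal properties yield the required monoidal functors on $\wt\caH(q)$ and $\caL$, and they descend to the localizations because the image of every $\Delta_n$ lies in the already-invertible symmetric centre of the target.
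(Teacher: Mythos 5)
Your proposal is correct and follows essentially the same route as the paper: invoke the universal properties of $\wt\caH(q)$ and $\caL$, observe that $y_1-y_2$ is invertible in the target localization, and then verify the quadratic, affine and braid relations for $\wt c$ by direct computation in the cross-products $k[y_1^{\pm 1},y_2^{\pm 1}]*S_2$ and $k[y_1^{\pm 1},y_2^{\pm 1},y_3^{\pm 1}]*S_3$ using $c\tss y_i=y_{\sigma(i)}\tss c$, which is exactly what the authors do (they carry out the braid relation by comparing coefficients of the elements of $S_3$ and reducing to the single identity $a_{12}a_{23}-a_{12}a_{13}-a_{13}a_{23}+(q-q^{-1})a_{13}=0$). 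The one step you leave unexecuted is that coefficient-by-coefficient check of the braid relation, which constitutes the bulk of the paper's computation; on the other hand, your identification of \eqref{ah} (rather than a commutator identity) as the relation to verify in the quantum case is the correct one.
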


\begin{proof}
In the affine Hecke category case,
write $\wt c=a+(q-a)t$ as an element of
$k[x_1,x_2]*S_2$ with $t=c$ and
$a=(q^{-1}-q)\tss x_2\tss(x_1-x_2)^{-1}$, where $x_1=x\otimes 1,\
x_2=1\otimes x$. Now we verify the relations
\begin{align}\label{cqez}
(\ts\wt c-q)(\ts\wt c+q^{-1}) &= 0,\qquad
(x\otimes 1)\tss\wt c - \wt c\ts(1\otimes x) = 1,\\
\label{czco}
(\ts\wt c\otimes 1_X)(1_X\otimes \wt c\ts)(\ts\wt c\otimes 1_X)
{}&= (1_X\otimes \wt c\ts)(\ts\wt c\otimes 1_X)(1_X\otimes \wt c\ts)
\end{align}
by direct computations in $k[x_1,x_2]*S_2$ and
$k[x_1,x_2,x_3]*S_3$, respectively, where in the latter
case we interpret the variables as $x_1=x\ot 1\ot 1$,
$x_2=1\ot x\ot 1$ and $x_3=1\ot 1\ot x$.
Noting that
$t\tss a=(q-q^{-1}-a)\tss t$ we
get
\begin{multline}
(a-q+(q-a)\ts t)(a+q^{-1}+(q-a)\ts t)\\
{}= (a-q)(a+q^{-1})
+ (a-q)(q-a)\ts t - (q-a)(a-q)\ts t + (q-a)(q^{-1}+a)\ts t^2 = 0
\nonumber
\end{multline}
and
\begin{multline}
(a+(q-a)t)\ts x_1(a-(q-a)t) \\
{}= a^2x_1
+ (q-a)x_2(q-q^{-1}-a)\ts t + ax_1(q-a)\ts t - (q-a)(q^{-1}+a)\ts t^2 \\
{}=x_2 + a\big(a(x_1-x_2) - (q^{-1}-q)x_2\big)
+(q-a)\big(a(x_1-x_2) - (q^{-1}-q)x_2\big)\ts t = x_2,
\nonumber
\end{multline}
thus proving \eqref{cqez}.
To verify \eqref{czco}, note that the relation is equivalent to
\begin{multline}
(a_{12}+(q-a)\ts t_{12})(a_{23}+(q-a)
\ts t_{23})(a_{12}+(q-a)\ts t_{12})\\
{}= (a_{23}+(q-a)\ts t_{23})(a_{12}+(q-a)
\ts t_{12})(a_{23}+(q-a)\ts t_{23}),
\label{ccxc}
\end{multline}
where we used the notation
$a_{12}=a\otimes 1,\ a_{23} = 1\otimes a$ and
$a_{13} = t_1a_{23}t_1 = t_2a_{12}t_2$. Now
compare the coefficients of the elements of $S_3$
on both sides of \eqref{ccxc}. They obviously
equal for the elements $t_1t_2$, $t_2t_1$
and $t_1t_2t_1=t_2t_1t_2$, while
for $1$, $t_1$ and $t_2$ we need to check
the following relations, respectively:
\ben
\bal
a_{12}a_{23}a_{12} + (q-a_{12})a_{13}(q^{-1}+a_{12})
&= a_{23}a_{12}a_{23} + (q-a_{23})a_{13}(q^{-1}+a_{23}),\\
a_{12}a_{23}(q-a_{12}) &+ (q-a_{12})a_{13}(q-q^{-1}-a_{12})
= a_{23}(q-a_{12})\tss a_{13},\\
a_{12}(q-a_{23})a_{13} &= a_{23}a_{12}(q-a_{23})
+ (q-a_{23})a_{13}(q-q^{-1}-a_{23}).
\eal
\een
However, all of them follow from the identity
\ben
a_{12}a_{23} - a_{12}a_{13} - a_{13}a_{23}
+ (q-q^{-1})a_{13} = 0,
\een
which is verified directly
by substituting the expressions
for $a_{12}$, $a_{23}$ and $a_{13}$ in terms of
$x_1$, $x_2$ and $x_3$.

In the case of the
degenerate affine Hecke category, the argument is quite similar.
We write $\wt c=a+(1-a)\ts t$ as an element of
$k[x_1,x_2]*S_2$ with $t=c$ and $a=(x_1-x_2)^{-1}$, where
$x_1=x\otimes 1,\ x_2=1\otimes x$. Now the relations
\ben
\bal
\wt c^{\ts 2}=1,\qquad
(x\otimes 1)\ts\wt c - \wt c\ts(1\otimes x) &= 1,\\
(\ts\wt c\otimes 1_X)(1_X\otimes \wt c\ts)(\ts\wt c\otimes 1_X)
&= (1_X\otimes \wt c\ts)(\ts\wt c\otimes 1_X)(1_X\otimes \wt c\ts)
\eal
\een
are verified
directly by computations in $k[x_1,x_2]*S_2$ and
$k[x_1,x_2,x_3]*S_3$, respectively, exactly as
in the Hecke category case.
\end{proof}

Let $\Delta(q)\subset \Mor\AS$ be a monoidally
and multiplicatively closed
set of morphisms generated by $q(x\otimes 1)-q^{-1}(1\otimes x)$.
That is, for each $n$ we consider
the multiplicatively closed set of
morphisms generated by
\ben
\Delta(q)_n\in k[y_1^{\pm 1},\dots,y_n^{\pm 1}]^{S_n}\subset
k[y_1^{\pm 1},\dots,y_n^{\pm 1}]*S_n = \End_\AS(X^{\otimes n}),
\een
where
$\Delta(q)_n = \prod_{i\not=j}(qy_i-q^{-1}y_j)$ is the  {\it
quantum discriminant polynomial}. We can also consider
$\Delta_q$ as a set of morphisms of $\wt\caH(q)$.
Similarly, let $\wt\Delta\subset \Mor\SAS$ be
a monoidally and multiplicatively closed
set of morphisms generated by $x\otimes 1-1\otimes x-1\otimes 1$,
so that for each $n$ we consider
the multiplicatively closed set of
morphisms generated by
\ben
\wt\Delta_n\in k[y_1,\dots,y_n]^{S_n}\subset k[y_1^,\dots,y_n]*S_n
= \End_\SAS(X^{\otimes n}),
\een
where $\wt\Delta_n =
\prod_{i\not=j}(y_i-y_j-1)$ is the  {\it degenerate quantum
discriminant polynomial}. Similar to the above, we can consider
$\wt\Delta$ as a set of morphisms
of $\caL$.

The proof of the following is completely
analogous to and partly follows from the proof of Proposition
\ref{disc}.

\begin{prop}\label{qdisc}
The assignment $(X,x,c)\mapsto (X,x,\wt c)$, where
\begin{equation}\label{iahc}
\wt c = (q(x\otimes 1)-q^{-1}(1\otimes x))^{-1}\big((x\otimes
1-1\otimes x)\ts c + (q^{-1}-q)(1\otimes x)\big)
\end{equation}
defines a monoidal functor
$\AS[\Delta(q)^{-1}]\to\wt\caH(q)[\Delta(q)^{-1}]$.

Moreover, the assignment
$(X,x,c)\mapsto (X,x,\wt c)$, where
\begin{equation}\label{idahc}
\wt c = (x\otimes 1-1\otimes x-1\otimes 1)^{-1}\big((x\otimes
1-1\otimes x)\ts c + 1\otimes 1\big)
\end{equation}
defines a monoidal functor
$\SAS[\wt\Delta^{-1}]\to\caL[\wt\Delta^{-1}]$.
\end{prop}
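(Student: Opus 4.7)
The plan is to exploit the universal property from Theorem~\ref{fsca} and its corollary: $\AS$ is the free symmetric monoidal category generated by an object $X$ and an invertible endomorphism $x$, so a monoidal functor $\AS\to\cC$ is determined by an object, an automorphism, and an involutive Yang--Baxter operator $c$ intertwining $x\otimes 1$ and $1\otimes x$ in the sense $(a\otimes b)c=c(b\otimes a)$. The functor extends uniquely to $\AS[\Delta(q)^{-1}]$ once the images of the morphisms in $\Delta(q)$ become invertible.

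For the first assertion I would take $\cC=\wt\caH(q)[\Delta(q)^{-1}]$, set $F(X)=X$ and $F(x)=x$, and define $F(c)=\wt c$ by \eqref{iahc}. The localization ensures that $q(x\otimes 1)-q^{-1}(1\otimes x)$ is invertible, so $\wt c$ makes sense. What remains is to verify, in $\wt\caH(q)[\Delta(q)^{-1}]$, the three defining relations of $\AS$: involutivity $\wt c^{\ts 2}=1$, the intertwining $(x\otimes 1)\wt c=\wt c(1\otimes x)$ (together with its partner $(1\otimes x)\wt c=\wt c(x\otimes 1)$), and the braid relation for $\wt c$.

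These are concrete identities and they can be checked as in the proof of Proposition~\ref{disc}. Using the multiplicative decomposition \eqref{ahamd}, I would write $\wt c=\alpha+\beta c$, where $c$ is the Hecke generator of $\wt\caH(q)$ and $\alpha,\beta$ are rational functions in $y_1,y_2$ with denominator $qy_1-q^{-1}y_2$. The Hecke quadratic relation $c^2=1+(q-q^{-1})c$ and the commutation rule
\ben
t\tss f(y_1,y_2)=f(y_2,y_1)\tss t-(q-q^{-1})\ts y_2\tss\frac{f(y_1,y_2)-f(y_2,y_1)}{y_1-y_2}
\een
then reduce involutivity and intertwining to rational-function identities in $y_1,y_2$ that are verified after clearing the denominator $qy_1-q^{-1}y_2$. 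The braid relation is an identity in $\wt\He_3(q)[\Delta(q)^{-1}_3]$, checked by expanding both sides in the basis $\{1,t_1,t_2,t_1t_2,t_2t_1,t_1t_2t_1\}$ of $S_3$ and matching the coefficients, exactly as in the proof of Proposition~\ref{disc}.

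The main obstacle is precisely this braid relation: the computation is bookkeeping-intensive, and the key step is to isolate a three-variable algebraic identity playing the role of $a_{12}a_{23}-a_{12}a_{13}-a_{13}a_{23}+(q-q^{-1})a_{13}=0$ in the previous proof. The second assertion concerning $\SAS[\wt\Delta^{-1}]\to\caL[\wt\Delta^{-1}]$ is entirely parallel: one uses the analogous commutation rule in $\Lambda_2$, notes that the generators $t_i$ of $\caL$ are already involutive (so the quadratic relation $\wt c^{\ts 2}=1$ is streamlined), and repeats the braid computation with the degenerate commutation rules. This parallelism is the sense in which the present proposition ``partly follows from'' Proposition~\ref{disc}.
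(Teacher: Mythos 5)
Your proposal is correct and follows essentially the same route as the paper: the paper's own proof of Proposition~\ref{qdisc} consists precisely of the remark that the verification is completely analogous to that of Proposition~\ref{disc}, i.e.\ one writes $\wt c$ in the form $\alpha+\beta\tss c$ inside the localized cross-product algebras via the decomposition \eqref{ahamd} (resp.\ \eqref{dahamd}) and checks the involutivity, intertwining and braid relations by comparing coefficients in the basis indexed by $S_2$ and $S_3$. Your outline supplies, if anything, more detail than the paper does.
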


Combining Propositions~\ref{disc} and \ref{qdisc}, we come to the
following theorem, where we let
$D(q)=\Delta\Delta(q)$ denote the set of compositions of morphisms from
$\Delta$ and $\Delta(q)$, and let
$D=\Delta\wt\Delta$ denote the set of compositions of morphisms from
$\Delta$ and $\wt\Delta$.

\begin{thm}\label{degdi}
The monoidal categories $\AS[D(q)^{-1}]$ and
$\wt\caH(q)[D(q)^{-1}]$ are equivalent.

Moreover, the
monoidal categories $\SAS[D^{-1}]$ and
$\caL[D^{-1}]$ are
equivalent.
\end{thm}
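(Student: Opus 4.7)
The plan is to promote the two assignments of Propositions~\ref{disc} and \ref{qdisc} to mutually quasi-inverse monoidal functors between the pairs of localised categories, and then invoke the universal characterisations of the Schur--Weyl categories to deduce the equivalences.

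First I would extend the functor of Proposition~\ref{disc} from $\wt\caH(q)[\Delta^{-1}]\to\AS[\Delta^{-1}]$ to a functor $\wt\caH(q)[D(q)^{-1}]\to\AS[D(q)^{-1}]$. Since the functor is the identity on $x$, every element of $\Delta(q)\subset\Mor\wt\caH(q)$ (which lies in the central polynomial subalgebra by \eqref{ahamd}) is sent to the same polynomial expression in the $y_i$'s viewed in $\AS$, and these are invertible in $\AS[D(q)^{-1}]$ by definition of $D(q)$. The universal property of the category of fractions therefore produces the desired extension. Symmetrically, Proposition~\ref{qdisc} extends to a monoidal functor $\AS[D(q)^{-1}]\to\wt\caH(q)[D(q)^{-1}]$.

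Next I would verify that these two extensions are mutually quasi-inverse. By Corollary~\ref{fahc} and the analogous universal characterisation of $\AS$ as a free symmetric monoidal category on $(X,x)$, together with the universal property of localisation, a monoidal endofunctor of either localised category is determined up to canonical isomorphism by its effect on the triple $(X,x,c)$. Both compositions fix $X$ and $x$ tautologically, so it remains to compute the image of $c$ in each direction. This amounts to substituting \eqref{iahc} into \eqref{ahc} (and vice versa), bringing the result into the normal form $a+b\tss t$ afforded by the multiplicative decomposition \eqref{ahamd}, and reading off that the normalised coefficients reproduce those of $c$. The degenerate statement proceeds identically with Theorem~\ref{fpdahc}, \eqref{dahc}, \eqref{idahc} and \eqref{dahamd} replacing their Hecke counterparts, and the divided-difference relation in $\Lambda_2$ taking the place of its $q$-deformation.

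The main obstacle I anticipate is the normal-form calculation in the second step. Because $c$ does not commute with the $y_i$ on either side, substituting one intertwiner formula into the other a priori produces residue terms that must be reorganised using the relation
\ben
t\tss f(y_1,y_2)=f(y_2,y_1)\tss t-(q-q^{-1})\ts y_2\ts (y_1-y_2)^{-1}\big(f(y_1,y_2)-f(y_2,y_1)\big)
\een
recorded after \eqref{ahamd} (and its divided-difference analogue in $\Lambda_2$) before the cancellations become visible; careful sign tracking is essential, as this is where both the Hecke/involutive relation on $c$ and the intertwining relation between $c$ and $x$ are genuinely used. Once the algebraic identity is established on the single generator $c$, the universal property of the Schur--Weyl categories upgrades it to a canonical monoidal natural isomorphism with the identity functor, yielding the claimed equivalences.
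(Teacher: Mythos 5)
Your proposal is correct and follows the paper's own (much terser) argument: the paper simply observes that the pairs of constructions \eqref{ahc}/\eqref{iahc} and \eqref{dahc}/\eqref{idahc} are mutually inverse, which is exactly your generator-level check combined with the universal properties and the extension to the joint localization. The one refinement worth noting is that the normal-form step you anticipate as the main obstacle is in fact immediate: writing both assignments in the form $a+b\ts t$ with $a,b$ in the commutative (Laurent) polynomial part, the substitution keeps all coefficients on the left, so the composite is computed entirely inside that commutative subalgebra and no re-ordering via the relation following \eqref{ahamd} is needed --- the commutation relations are only used earlier, in verifying that the assignments define functors at all.
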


\begin{proof}
Both statements follow from the observation that each pair
of constructions (\ref{ahc}) and
(\ref{iahc}), as well as (\ref{dahc}) and (\ref{idahc}),
are inverse to each other.
\end{proof}

\begin{remark}
By the Galois theory,
$k[y_1,\dots,y_n][\Delta_n^{-1}]*S_n$ is isomorphic to
the algebra
$$
M_{n!}\big(k[y_1,\dots,y_n]^{S_n}[\Delta_n^{-1}]\big)
$$
of $n!\times n!$ matrices with coefficients in
the localization $k[y_1,\dots,y_n]^{S_n}[\Delta_n^{-1}]$
of the algebra of symmetric polynomials. Theorem
\ref{degdi} implies that the algebra
$\wt\He_n(q)[(\Delta_n\Delta(q)_n)^{-1}]$ is isomorphic to the matrix
algebra
$$
M_{n!}\big(k[y_1^{\pm 1},\dots,y_n^{\pm1}]^{S_n}
[(\Delta_n\Delta(q)_n)^{-1}]\big).
$$
Similarly, the algebra
$\Lambda_n[(\Delta_n\wt\Delta_n)^{-1}]$ is isomorphic to the
matrix algebra
$$
M_{n!}\big(k[y_1^{\pm 1},\dots,y_n^{\pm
1}]^{S_n}[(\Delta_n\wt\Delta_n)^{-1}]\big).
$$
\end{remark}

\subsection{Orellana--Ram and Cherednik--Arakawa--Suzuki functors}

Here we give a construction, turning a braided monoidal category
with a Hecke object into a module category over the affine Hecke
category; see e.g. \cite{jk}, \cite{qi:ha}
for the definition of a module category.
Our construction has been motivated
by the work of Orellana and Ram~\cite{or:ab}.

Let $\cC$ be a braided monoidal category and let $c^{}_{X,Y}$
denote the braiding
\ben
c^{}_{X,Y}:X\ot Y\to Y\ot X.
\een
Fix an object $X$ of
$\cC$ and let $O:\cC\to\cC$ be the functor
of tensoring by $X$ from the
right: $O(Y) = O_X(Y) = Y\otimes X$. Note that $O$, as an object of
the monoidal category of endofuctors $\Funct(\cC,\cC)$, possesses a
Yang--Baxter operator $c$, which is (as a morphism in the functor
category $\Funct(\cC,\cC)$) the natural transformation
$O_{c^{}_{X,X}}$:
$$\xymatrix{O\circ O(Y) = Y\otimes X^{\otimes 2}
\ar[rr]^{1_Y\ot c^{}_{X,X}} && Y\otimes X^{\otimes 2} = O\circ O(Y)}.$$
Define
an endomorphism $x:O\to O$ (a natural transformation) as the
composition:
$$
\xymatrix{O(Y) = Y\otimes X \ar[rr]^(.6){c^{}_{Y,X}} &&
X\otimes Y \ar[rr]^(.4){c^{}_{X,Y}} && Y\otimes X = O(Y).}
$$

\begin{lem}
The triple $(O,x,c)$ defines a monoidal functor
$OR_X:\wt\cB\to\Funct(\cC,\cC)$.
\end{lem}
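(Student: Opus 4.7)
The plan is to invoke the universal property of $\wt\cB$ established earlier in the section: a monoidal functor out of $\wt\cB$ into any monoidal category is equivalent to the data of an object $V$, an endomorphism $x\in\End(V)$, and a Yang--Baxter operator $c\in\End(V^{\otimes 2})$ satisfying $c(x\otimes 1)c = 1\otimes x$. It therefore suffices to check these two relations for the triple $(O, x, c)$ inside the monoidal category $\Funct(\cC,\cC)$, whose monoidal product is composition of functors and whose tensor product of natural transformations is horizontal composition.

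The Yang--Baxter relation for $c$ on $O\otimes O$ is essentially formal: evaluated at $Y\in\cC$, the components $(c\otimes 1_O)_Y$ and $(1_O\otimes c)_Y$ are endomorphisms of $Y\otimes X^{\otimes 3}$ that act as $c_{X,X}$ on one adjacent pair of $X$-factors (and as the identity on the remaining factors), so the braid identity for them reduces to the fact that $c_{X,X}$ is itself a Yang--Baxter operator in $\cC$, a standard consequence of the hexagon axioms. The naturality of $x: O\to O$ as a transformation, required to even have $(1_O\otimes x)_Y$ make sense as a component of a natural transformation, is also routine from the naturality of the braiding of $\cC$.

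The essential content is the affine braid relation. After unpacking horizontal composition, the components $(x\otimes 1_O)_Y$ and $(1_O\otimes x)_Y$ correspond (in the appropriate order determined by the convention on horizontal composition) to $x_Y\otimes 1_X = O(x_Y)$ and $x_{Y\otimes X} = x_{O(Y)}$, so the relation $c(x\otimes 1_O)c = 1_O\otimes x$ reduces at each $Y$ to the single identity
\[
x_{Y\otimes X} = (1_Y\otimes c_{X,X})\circ(x_Y\otimes 1_X)\circ(1_Y\otimes c_{X,X}).
\]
I would prove it by expanding $x_{Y\otimes X} = c_{X, Y\otimes X}\circ c_{Y\otimes X, X}$ via the hexagon identities
\[
c_{Y\otimes X, X} = (c_{Y, X}\otimes 1_X)\circ(1_Y\otimes c_{X,X}),\qquad c_{X, Y\otimes X} = (1_Y\otimes c_{X,X})\circ(c_{X, Y}\otimes 1_X),
\]
and then collapsing the two middle factors to $(c_{X,Y}c_{Y,X})\otimes 1_X = x_Y\otimes 1_X$. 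The main obstacle is simply bookkeeping: keeping track of which copy of $X$ in $Y\otimes X^{\otimes 2}$ each natural transformation acts on, and consistently identifying $(x\otimes 1_O)_Y$ and $(1_O\otimes x)_Y$ through the horizontal composition formula. Once those identifications are in place, the hexagon computation above closes the argument directly, and the lemma follows by the universal property.
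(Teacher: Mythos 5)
Your argument is correct and is essentially the paper's own proof: the paper likewise reduces the lemma to checking the affine braid relation $c\,(x\otimes 1)\,c = 1\otimes x$ componentwise at each $Y$, and establishes the identity $x_{Y\otimes X} = (1_Y\otimes c_{X,X})\circ(x_Y\otimes 1_X)\circ(1_Y\otimes c_{X,X})$ by combining exactly the two hexagon (coherence) identities for the braiding that you write out. The only cosmetic difference is that the paper packages this computation as a single pasted commutative diagram rather than as displayed equations.
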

\begin{proof}
The following commutative diagram (which is a joint of two coherence
diagrams for the braiding)
guarantees that $c$ and $x$ satisfy the condition
(\ref{ah}):
$$\xymatrix{Y\otimes X^{\otimes 2}
\ar[d]^{1_Y\ot c^{}_{X,X}} \ar[rr]^{c^{}_{Y\ot X,X}} &&
X\otimes Y\otimes X
\ar[rr]^{c^{}_{X,Y\ot X}}
\ar@{=}[d]&& Y\otimes X^{\otimes 2} \\
Y\otimes X^{\otimes 2} \ar[rr]^{c^{}_{Y\ot X,X}} && X\otimes Y\otimes X
\ar[rr]^{c^{}_{X,Y\ot X}} && Y\otimes X^{\otimes 2}
\ar[u]^{1_Y\ot c^{}_{X,X}} }
$$
thus proving the claim.
\end{proof}

We call $OR_X$ the {\it Orellana--Ram}
functor corresponding to $X\in\cC$.
In the particular case
$\cC=\cB$ we get a monoidal functor
$OR:\wt\cB\to\Funct(\cB,\cB)$ corresponding to the generating
object of $\cB$. It is easy to see that this functor is faithful
(injective on morphisms).

An object $X$ of a braided monoidal category $\cC$
will be called a {\it
Hecke object\/}, if the braiding
$c_{X,X}\in \End_\cC(X^{\otimes 2})$ satisfies
the equation
$(c_{X,X}-q\ts1_{X\ot X})(c_{X,X}+q^{-1}\ts1_{X\ot X})=0$
for some non-zero scalar $q\in k$.

\begin{prop}
If $X$ is a Hecke object of $\cC$, then the functor $OR_X$ factors
through the affine Hecke category $\wt\caH(q)$, giving rise to a
functor $OR_X:\wt\caH(q)\to\Funct(\cC,\cC)$.
\end{prop}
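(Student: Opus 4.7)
The plan is to invoke the universal property of $\wt\caH(q)$ given by Corollary~\ref{fahc}: a monoidal functor from $\wt\caH(q)$ to any monoidal category is determined by a triple $(V,x,c)$ where $c$ is a Hecke Yang--Baxter operator satisfying $c\tss(x\ot 1)\tss c = 1\ot x$. Since the construction of $OR_X$ already produced such a triple $(O, x, c)$ in $\Funct(\cC,\cC)$ satisfying the affine braid relation, the only thing left to check is that the Yang--Baxter operator $c$ is in fact a \emph{Hecke} Yang--Baxter operator, i.e.\ satisfies $(c-q)(c+q^{-1})=0$ in $\End(O\circ O)$.

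First I would recall that by definition $c\se O\circ O\to O\circ O$ is the natural transformation whose component at $Y\in\cC$ is $1_Y\ot c_{X,X}\se Y\ot X^{\ot 2}\to Y\ot X^{\ot 2}$. Therefore, computing the endomorphism $(c-q\tss 1_{O\circ O})(c+q^{-1}\tss 1_{O\circ O})$ component-wise at $Y$ yields $1_Y\ot (c_{X,X}-q\tss 1_{X\ot X})(c_{X,X}+q^{-1}\tss 1_{X\ot X})$. The Hecke object hypothesis on $X$ states that the second tensor factor vanishes, so the whole natural transformation vanishes. Thus $c$ is indeed a Hecke Yang--Baxter operator in $\Funct(\cC,\cC)$ with parameter $q$.

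Given this, Corollary~\ref{fahc} applied to the triple $(O,x,c)$ in the monoidal category $\Funct(\cC,\cC)$ produces the desired monoidal functor $\wt\caH(q)\to\Funct(\cC,\cC)$. Equivalently, one observes that the kernel of the epimorphism of multiplicative sequences $k[\wt B_*]\to \wt\He_*(q)$ is generated (monoidally and as ideals) by $(t-q)(t+q^{-1})\in k[\wt B_2]$, and the image of this element under the functor $OR_X$ is precisely the above vanishing natural transformation; hence $OR_X\se \wt\cB\to\Funct(\cC,\cC)$ factors through the quotient $\wt\cB\to\wt\caH(q)$.

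There is no real obstacle: the content is purely formal, reducing to transporting the scalar identity $(c_{X,X}-q)(c_{X,X}+q^{-1})=0$ in $\End_\cC(X^{\ot 2})$ to the corresponding identity for $1_Y\ot c_{X,X}$ uniformly in $Y$, and then appealing to the universal property already established in Corollary~\ref{fahc}.
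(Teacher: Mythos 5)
Your proposal is correct and follows essentially the same route as the paper: the paper's proof is the one-line observation that the Yang--Baxter operator $c=O_{c_{X,X}}$ on $O$ inherits the relation $(c-q)(c+q^{-1})=0$ from the Hecke object $X$, with the factorization through $\wt\caH(q)$ then following from the universal property already established. You have simply spelled out the component-wise verification and the appeal to Corollary~\ref{fahc} in more detail.
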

\begin{proof}
By definition, the Yang--Baxter operator $c$ on
the functor $O$ satisfies
the equation $(c-q)(c+q^{-1})=0$.
\end{proof}

Now we describe a degenerate analogue of the Orellana--Ram functors. The
construction (a special case of
which was studied in \cite{as:lad} and \cite{ch:mr})
requires an infinitesimal version of the notion of braided category.
The notion of {\it chorded categories\/} was virtually defined
by Drinfeld~\cite{d:qq} and was studied in \cite{ka:qg} under
the name {\it infinitesimal symmetric categories}.
Due to their relation with
Kontsevich's chord diagrams we will call them chorded categories.

Let $\cC$ be a symmetric monoidal category with
the symmetry $c_{X,Y}$.
A {\it chording} on $\cC$ is a
natural collection of morphisms $h_{X,Y}:X\otimes Y\to X\otimes Y$
satisfying the conditions:
\begin{equation}\label{symcho}
c_{X,Y}h_{Y,X} = h_{X,Y}c_{X,Y}
\end{equation}
and
\begin{equation}\label{cho}
h_{X,Y\otimes Z} = h_{X,Y}\otimes 1_Z + (1_X\otimes
c_{Y,Z})(h_{X,Z}\otimes 1_Y) (1_X\otimes c_{Y,Z})^{-1}.
\end{equation}
A symmetric monoidal category with a chording will be called a
{\it chorded
category}.

A symmetric monoidal functor $F:\cC\to\caD$
between chorded categories is {\it
chorded} if
\ben
F_{X,Y}F(h_{X,Y}) = h_{F(X),F(Y)}F_{X,Y},
\een
where
\ben
F_{X,Y}:F(X\otimes Y)\to F(X)\otimes F(Y)
\een
is the monoidal
constraint of $F$.

Now  we describe a construction, which endows the category
of representations of a Lie algebra with a chorded structure.
A Lie
algebra $\mathfrak g$ will be called
a {\it Casimir Lie algebra}, if it is equipped with a
symmetric and $\mathfrak g$-invariant element
$\Omega\in\mathfrak g\ot \mathfrak g$.

\bpr
Let $(\mathfrak g,\Omega)$ be Casimir Lie algebra
and let $M,N\in \Rep(\mathfrak g)$.
The
relation
\ben
h_{M,N}(m\otimes n) = \Omega(m\otimes n),
\qquad m\in M,\quad n\in N,
\een
defines a chorded structure on the category of representations
$\Rep(\mathfrak g)$ of $\mathfrak g$.
\epr

\begin{proof}
The $\mathfrak g$-invariance of $\Omega$ implies that $h_{M,N}$ is a
homomorphism of representations of $\mathfrak g$; i.e.,
a morphism in $\Rep(\mathfrak g)$.
The condition \eqref{symcho} follows from the symmetry property
of $\Omega$. Finally, we have the identity
$$(1\otimes\Delta)(\Omega) = \Omega_{12} + \Omega_{13},$$
where $\Delta(y)=y\ot 1+1\ot y$ for $y\in\mathfrak g$,
which
verifies the condition \eqref{cho}.
\end{proof}

An object $X$ of a chorded category $\cC$
will be called a {\it degenerate
Hecke object}, if the chording is proportional
to the commutativity morphism,
$h_{X,X}=\lambda\ts c_{X,X}$ for some non-zero
scalar $\lambda\in k$.
Let $\cC$ be a chorded monoidal category and let $X$ be a degenerate
Hecke object of $\cC$.
Let $O:\cC\to\cC$ be
a functor of tensoring by $X$ from the right, $O(Y) = O_X(Y) =
Y\otimes X$. Note that $O$, as an object of the monoidal category of
endofuctors $\Funct(\cC,\cC)$, possesses a Yang--Baxter operator $c$,
which is (as a morphism in the functor category $\Funct(\cC,\cC)$) the
natural transformation $O_{c^{}_{X,X}}$:
$$\xymatrix{O\circ O(Y)
= Y\otimes X^{\otimes 2}\ar[rr]^{1_Y\ot c^{}_{X,X}} &&
Y\otimes X^{\otimes 2}
= O\circ O(Y)}.$$
Define an endomorphism $x:O\to O$ (a natural
transformation):
$$
\xymatrix{O(Y) = Y\otimes X
\ar[rr]^{\lambda^{-1}h_{Y,X}} && Y\otimes X = O(Y).}
$$

\begin{prop}
Let $\cC$ be a chorded monoidal category and let $X$ be a degenerate
Hecke object of $\cC$.
Then the triple $(O,x,c)$ defines a monoidal functor
$CAS_X:\caL\to\Funct(\cC,\cC)$.
\end{prop}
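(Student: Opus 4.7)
The plan is to invoke Theorem~\ref{fpdahc}, which reduces the claim that $(O,x,c)$ determines a monoidal functor $\caL\to\Funct(\cC,\cC)$ to the verification of three relations on the endofunctor $O$ and its natural endomorphisms $x$ and $c$: the involution $c^2=1$, the braid relation for $c$, and the degenerate affine Hecke relation $(x\ot 1)\tss c-c\tss(1\ot x)=1$. The first two relations are immediate: since $\cC$ is symmetric, $c_{X,X}^2=1_{X\ot X}$ and $c_{X,X}$ satisfies the Yang--Baxter (hexagon) identity in $\cC$; whiskering by $O$ (equivalently, post-composing with $1_Y\ot(-)$) transports these identities to the corresponding relations in the endomorphism algebra of $O\circ O$ (and $O\circ O\circ O$) in $\Funct(\cC,\cC)$.

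The substantive step is the third relation. Unravelling the horizontal composition in $\Funct(\cC,\cC)=\Funct(\cC,\cC)^{\circ}$, the components at $Y$ of $(x\ot 1)\tss c$ and $c\tss(1\ot x)$ are
\[
\lambda^{-1}h_{Y\ot X,X}\circ(1_Y\ot c_{X,X})\qquad\text{and}\qquad
\lambda^{-1}(1_Y\ot c_{X,X})\circ(h_{Y,X}\ot 1_X),
\]
so after multiplication by $\lambda$ the target identity becomes
\[
h_{Y\ot X,X}\circ T-T\circ(h_{Y,X}\ot 1_X)=\lambda\cdot 1_{Y\ot X^{\ot 2}},\qquad T:=1_Y\ot c_{X,X}.
\]

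The key input is a ``left-factor'' version of the chording axiom \eqref{cho}: combining \eqref{symcho} and \eqref{cho}, together with the symmetry relations $c_{X,X}^{-1}=c_{X,X}$ and the hexagon for $c_{Y\ot X,X}$, one obtains the decomposition
\[
h_{Y\ot X,X}\ =\ T\tss(h_{Y,X}\ot 1_X)\tss T\ +\ 1_Y\ot h_{X,X}.
\]
(This is the analogue of the Casimir identity $\Omega_{12,3}=\Omega_{13}+\Omega_{23}$.) I would first check this decomposition as a short lemma, or derive it inline from \eqref{symcho}--\eqref{cho}; this is the main obstacle, since getting the braidings on the right-hand factor to cancel correctly is where most of the bookkeeping lives.

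Given the decomposition, the verification is a two-line computation: using $T^2=1_{Y\ot X^{\ot 2}}$,
\[
h_{Y\ot X,X}\circ T-T\circ(h_{Y,X}\ot 1_X)\ =\ (1_Y\ot h_{X,X})\circ T\ =\ 1_Y\ot(h_{X,X}\tss c_{X,X}),
\]
and the degenerate Hecke hypothesis $h_{X,X}=\lambda\tss c_{X,X}$ combined with $c_{X,X}^2=1_{X\ot X}$ collapses the right-hand side to $\lambda\cdot 1_{Y\ot X^{\ot 2}}$, as required. Naturality of all morphisms involved in $Y$ is automatic since $h$ and $c$ are natural transformations, so the identity holds as natural transformations $O\circ O\to O\circ O$, completing the verification and hence the existence of $CAS_X$.
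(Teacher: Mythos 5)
Your argument is correct and is essentially the paper's own proof: the paper likewise reduces the statement (implicitly via Theorem~\ref{fpdahc}) to the single relation \eqref{dege} and verifies it by expanding $h_{Y\otimes X,X}=1_Y\otimes h_{X,X}+(1_Y\otimes c_{X,X})(h_{Y,X}\otimes 1_X)(1_Y\otimes c_{X,X})$ from the chording axiom and then cancelling using $c_{X,X}^2=1_{X\otimes X}$ and $h_{X,X}=\lambda\,c_{X,X}$. The only divergence is bookkeeping --- which copy of $X$ in $Y\otimes X^{\otimes 2}$ is the first tensor factor: you attach $h_{Y\otimes X,X}$ to $x\otimes 1$ and $h_{Y,X}\otimes 1_X$ to $1\otimes x$, opposite to the paper's displayed computation, and your choice is the internally consistent one, since with the paper's assignment the same cancellation yields $-1_{Y\otimes X^{\otimes 2}}$ (its final equality silently drops a sign).
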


\begin{proof}
The condition (\ref{dege}) follows from the chording axiom. Indeed,
the natural transformation $(x\otimes 1)\ts c-c\ts (1\otimes x)\in
\End\big(O(X)\circ O(X)\big)$ evaluated at $Y\in\cC$ has the form
\ben
\bal
\lambda^{-1}(h_{Y,X}\otimes 1_X)(1_Y\otimes c_{X,X})
&- \lambda^{-1}(1_Y\otimes c_{X,X})h_{Y\otimes X,X}\\
{}=\lambda^{-1}(h_{Y,X}&\otimes 1_X)(1_Y\otimes c_{X,X})
- \lambda^{-1}(1_Y\otimes c_{X,X})\\
{}&\times\Big(1_Y\otimes h_{X,X} + (1_Y\otimes
c_{X,X})(h_{Y,X}\otimes 1_X)(1_Y\otimes c_{X,X})\Big)\\
& \qquad\qquad\qquad\qquad=
\lambda^{-1}(1_Y\otimes c_{X,X})(1_Y\otimes h_{X,X}) = 1_{Y\ot X\ot X},
\eal
\een
as required.
\end{proof}

We call the functor $CAS_X$ the {\it
Cherednik--Arakawa--Suzuki} functor corresponding to $X\in\cC$;
see the following examples.

\begin{example}
{\it Representations of $\gl_N$}.
Let $V$ be the $N$-dimensional vector representation of the Lie
algebra $\gl_N$. Consider the chorded structure on the category
$\cC=\Rep(\gl_N)$ of
representations of $\gl_N$
given by the standard Casimir
element $C\in \gl_N\ot\gl_N$.
Then $V$ is a degenerate Hecke object of $\cC$.
This gives a functor $CAS_V:\caL\to\Funct(\cC,\cC)$ studied in some
form in \cite{as:lad} and \cite{ch:mr}; see also
\cite[Lemma 7.21]{cr:de}. Note also its relationship
with the functors used in \cite{bs:hw3}
(for $\gl_{m+n}$-modules) and in \cite{bs:hw4}
(for $\gl_{m|n}$-modules) to establish
equivalences of categories, giving
rise to explicit diagrammatical descriptions of parabolic highest
weight categories for $\gl_{m+n}$ and for the category of finite
dimensional modules over $\gl_{m|n}$.
\qed
\end{example}

\begin{example}
{\it The category $\cS$}.
The category $\cS$ has a chorded structure uniquely defined by
$h_{X,X}=c_{X,X}$. In particular, the generator $X$ is a degenerate
Hecke object of $\cS$. Thus we get a monoidal functor
$CAS_X:\caL\to\Funct(\cS,\cS)$.
\qed
\end{example}

\end{document}